\newtheorem{thm}{Theorem}[section]
\newtheorem{lem}[thm]{Lemma}
\newtheorem{prop}[thm]{Proposition}
\newtheorem{cor}[thm]{Corollary}
\newtheorem{conj}[thm]{Conjecture}
\theoremstyle{definition}
\newtheorem{defn}[thm]{Definition}
\theoremstyle{remark}
\newcommand{\restrict}{\upharpoonright}
\newcommand{\la}{\langle}
\newcommand{\ra}{\rangle}
\title{On a conjecture of Dobrinen and Simpson concerning almost everywhere domination}
\author{Stephen Binns \\  Bj{\o}rn Kjos-Hanssen \\ Manuel Lerman  \\ Reed Solomon}
\begin{document}

\maketitle

\footnotetext{Solomon's research was partially supported by an NSF Grant DMS-0400754.  The authors thank Joe Miller for helpful comments 
on a draft of this article.}

\tableofcontents

\section{Introduction}
\label{sec:intro}

Dobrinen and Simpson \cite{dob:04} introduced the notions of almost everywhere domination and uniform almost everywhere domination 
to study recursion theoretic analogues of results in set theory concerning domination in generic extensions of transitive 
models of ZFC and to study regularity properties of the Lebesgue measure on $2^{\omega}$ in reverse mathematics.  
In this article, we examine one of their conjectures concerning these notions.  

Throughout this article, $\leq_T$ denotes Turing reducibility and $\mu$ denotes the Lebesgue (or ``fair coin'') probability 
measure on $2^{\omega}$ given by $\mu( \{ X \in 2^{\omega} \, | \, X(n) = i \} ) = 1/2$.  
A property holds \textbf{almost everywhere} or \textbf{for almost all} $X \in 2^{\omega}$ if it holds on a set of measure 1.   
For $f, g \in \omega^{\omega}$, $f$ \textbf{dominates} $g$ if $\exists m \forall n > m ( f(n) > g(n))$.   

\begin{defn}[Dobrinen, Simpson] 
A set $A \in 2^{\omega}$ is \textbf{almost everywhere (a.e.) dominating} if for almost all $X \in 2^{\omega}$ and all functions $g \leq_T X$, there is a
function $f \leq_T A$ such that
$f$ dominates $g$.  $A$ is \textbf{uniformly almost everywhere (u.a.e.) dominating} if there is a function $f \leq_T A$ such that for almost all $X \in
2^{\omega}$ and all functions $g \leq_T X$, $f$ dominates $g$.   
\end{defn}

There are several trivial but useful observations to make about these definitions.  First, although these properties 
are stated for sets, they are also properties of Turing degrees.  That is, a set is (u.)a.e.~dominating if and only if every other set of the 
same degree is (u.)a.e.~dominating.  Second, both properties are closed upwards in the Turing degrees.  Third, 
u.a.e.~domination implies a.e.~domination.  Finally, if $A$ is u.a.e.~dominating, then there is a function $f \leq_T A$ which 
dominates every computable function.

Dobrinen and Simpson \cite{dob:04} introduced these notions to study the following two regularity properties of $\mu$ 
in reverse mathematics: for each $G_{\delta}$ set 
$Q \subseteq 2^{\omega}$ and each $\epsilon > 0$, there is a closed set $F \subseteq Q$ such that $\mu(F) \geq \mu(Q) - \epsilon$, 
and for each $G_{\delta}$ set $Q \subseteq 2^{\omega}$, there is an $F_{\sigma}$ set $S \subseteq Q$ such that $\mu(Q) = \mu(S)$.  
$\text{ACA}_0$ is strong enough to prove these statements, so as the first step toward establishing reversals, 
they proved the following two theorems.  (Reverse mathematics plays only a motivational role here, but the reader who 
is not familiar with this subject is referred to Simpson \cite{simp:book}.)  

\begin{thm}[Dobrinen, Simpson] 
\label{thm:simpson}
For $A \in 2^{\omega}$, the following are equivalent.
\begin{enumerate}
\item $A$ is a.e.~dominating.
\item For every $\Pi^0_2$ set $Q \subseteq 2^{\omega}$ and $\epsilon > 0$, there is a $\Pi^{0,A}_1$ set $F \subseteq Q$ such that 
$\mu(F) \geq \mu(Q) - \epsilon$.  
\end{enumerate}
\end{thm}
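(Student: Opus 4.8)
The plan is to prove both implications directly, using a clopen ``stagewise'' approximation of $\Sigma^0_1$ sets as the bridge between measure and domination. Write the given $\Pi^0_2$ set as $Q=\bigcap_s U_s$ with the $U_s$ uniformly $\Sigma^0_1$ and, after replacing $U_s$ by $\bigcap_{r\le s}U_r$, decreasing; for $s,t\in\omega$ let $U_s[t]$ be the clopen set generated by the basic clopen sets enumerated into $U_s$ within $t$ steps, so that $U_s[t]$ is clopen uniformly in $(s,t)$, increases in $t$ to $U_s$, and $\mu(U_s[t])\uparrow\mu(U_s)$. For $X\in Q$ set $g_X(s)$ to be the least $t$ with $X\in U_s[t]$; then $g_X$ is total on $Q$ and $g_X\le_T X$ uniformly.

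For (1)$\Rightarrow$(2), fix $\epsilon>0$ (and take $F=\emptyset$ if $\epsilon\ge\mu(Q)$). I would first note that for almost every $X\in Q$ there is a least index $e(X)$ with $\Phi^A_{e(X)}$ total and dominating $g_X$ --- this is a.e.~domination applied to $g_X\le_T X$. Since $\sum_e\mu(\{X\in Q:e(X)=e\})=\mu(Q)$, choose $N$ with $\mu(\{X\in Q:e(X)\le N\})>\mu(Q)-\epsilon/2$; then $f:=\max\{\Phi^A_e : e\le N,\ \Phi^A_e\text{ total}\}$, with the finite list of such $e$ hard-wired as advice, is total, $\le_T A$, and dominates $g_X$ on a set of measure $>\mu(Q)-\epsilon/2$. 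Splitting that set according to the threshold past which $f(s)>g_X(s)$, fix $m^\ast$ with $\mu(\{X\in Q:\forall s>m^\ast\ f(s)>g_X(s)\})>\mu(Q)-\epsilon$ and put
\[
F:=\{X\in 2^\omega:\ \forall s>m^\ast\ \ X\in U_s[f(s)]\}.
\]
Each clause is clopen uniformly in $A$, so $F$ is $\Pi^{0,A}_1$; since the $U_s$ decrease, $F\subseteq\bigcap_{s>m^\ast}U_s=\bigcap_s U_s=Q$; and $F$ contains $\{X\in Q:\forall s>m^\ast\ f(s)>g_X(s)\}$, so $\mu(F)>\mu(Q)-\epsilon$.

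For (2)$\Rightarrow$(1) I would argue by contradiction. If $A$ is not a.e.~dominating, then writing $\{X:\exists g\le_T X\text{ dominated by no }f\le_T A\}$ as the union over $e$ of $\{X:\Phi^X_e\text{ total and dominated by no }f\le_T A\}$, some index $e$ makes the latter set, call it $B$, have positive measure. Apply (2) to the $\Pi^0_2$ set $Q:=\{X:\Phi^X_e\text{ total}\}$ with $\epsilon=\mu(B)/2$ to get a $\Pi^{0,A}_1$ set $F=[T]$ ($T$ an $A$-computable tree) with $F\subseteq Q$ and $\mu(F)>\mu(Q)-\mu(B)$, whence $F\cap B\neq\emptyset$. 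Now I would invoke the standard compactness bound on a functional that is total on a $\Pi^{0,A}_1$ class: for each $n$ the $A$-computable subtree $T_n$ consisting of strings $\sigma\in T$ along which $\Phi^\tau_e(n)$ has not halted within $|\tau|$ steps for any $\tau\preceq\sigma$ has no infinite path (every path of $T$ lies in $Q$), hence is finite by K\"onig's lemma, so $A$ can compute a level exhausting $T_n$ and thereby an $A$-computable function $f$ with $f(n)>\Phi^X_e(n)$ for all $X\in F$ and all $n$. Then $f\le_T A$ dominates $\Phi^X_e$ for every $X\in F$, contradicting $F\cap B\neq\emptyset$.

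Routine bookkeeping (and measurability of the auxiliary sets) aside, the load-bearing point in (1)$\Rightarrow$(2) is the observation that one need only dominate $g_X$ on a subset of $Q$ of measure exceeding $\mu(Q)-\epsilon$, which is exactly what the ``countable union plus finite advice'' step extracts from a.e.~domination; and the step I expect to require the most care is the compactness bound in (2)$\Rightarrow$(1), i.e.\ converting totality of $\Phi_e$ on the $\Pi^{0,A}_1$ class $F$ into a single dominating function computable from $A$ alone, with the step-counting conventions arranged so that $T_n$ and the exhausting level are genuinely $A$-computable.
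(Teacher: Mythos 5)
Your proof is correct in both directions: the ``countable union plus finite advice'' step followed by the tail-threshold $m^\ast$ and the clopen approximations $U_s[f(s)]$ gives the $\Pi^{0,A}_1$ set for (1)$\Rightarrow$(2), and the K\"onig's-lemma bound on a functional total on a $\Pi^{0,A}_1$ class correctly yields the contradiction for (2)$\Rightarrow$(1). Note that the paper only quotes this theorem from Dobrinen and Simpson and gives no proof of its own, so there is nothing to compare against; your argument is the standard one for this equivalence.
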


\begin{thm}[Dobrinen, Simpson]
\label{thm:other}
For $A \in 2^{\omega}$, the following are equivalent.
\begin{enumerate}
\item $A$ is u.a.e.~dominating.
\item For every $\Pi^0_2$ set $Q \subseteq 2^{\omega}$, there is a $\Sigma^{0,A}_2$ set $S \subseteq Q$ such that 
$\mu(Q) = \mu(S)$.  
\end{enumerate}
\end{thm}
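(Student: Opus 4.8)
The plan is to establish the two implications by complementary constructions, turning a $\Pi^0_2$ set into a domination requirement and back. For normal forms I would write a $\Pi^0_2$ set $Q$ as $\bigcap_n U_n$ with $(U_n)$ a uniformly effectively open, decreasing sequence, and fix uniformly computable clopen sets $U_{n,s}$, increasing in $s$, with $U_n=\bigcup_s U_{n,s}$; and I would use that the $\Sigma^{0,A}_2$ subsets of $2^\omega$ are exactly the unions $\bigcup_m F_m$ over uniformly $\Pi^{0,A}_1$ (hence uniformly compact) sequences $(F_m)$.

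For $(1)\Rightarrow(2)$, fix a single $f\leq_T A$ witnessing u.a.e.\ domination and set $S:=\liminf_n U_{n,f(n)}=\bigcup_N\bigcap_{n\geq N}U_{n,f(n)}$. Since $f\leq_T A$, the clopen sets $U_{n,f(n)}$ are uniformly $A$-computable in $n$, so $\bigcap_{n\geq N}U_{n,f(n)}$ is uniformly $\Pi^{0,A}_1$ in $N$ and $S$ is $\Sigma^{0,A}_2$; and since the $U_n$ decrease, membership in $S$ forces membership in every $U_n$, so $S\subseteq Q$. For the measures, observe that for $X\in Q$ the function $g^Q_X(n):=\min\{s:X\in U_{n,s}\}$ is total and $g^Q_X\leq_T X$; by u.a.e.\ domination $f$ dominates $g^Q_X$ for every $X$ outside a null set, and for such $X\in Q$ one has $X\in U_{n,f(n)}$ for all large $n$, i.e.\ $X\in S$. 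Thus $Q\setminus S$ is null and $\mu(S)=\mu(Q)$.

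For $(2)\Rightarrow(1)$, I would fold all the requirements ``$f$ dominates $\Phi_e^X$ whenever $\Phi_e^X$ is total'' into a single $\Pi^0_2$ set. Let $D_e:=\{X:\Phi_e^X\text{ is total}\}$, a uniformly $\Pi^0_2$ class; for $Y\in 2^\omega$ let $\iota_e(Y)$ be $0^e1$ followed by $Y$; and put $Q':=\{\iota_e(Y):e\in\omega,\ Y\in D_e\}$. Then $Q'$ is $\Pi^0_2$ (it is $\bigcap_k O_k$ with $O_k$ the effectively open set of $X\neq 0^\omega$ such that, with $e$ the position of the first $1$ of $X$ and $Y$ its tail beyond there, $\Phi_e^Y(k)\!\downarrow$), and $\mu(Q'\cap[0^e1])=2^{-e-1}\mu(D_e)$. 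Applying $(2)$ to $Q'$ yields a $\Sigma^{0,A}_2$ set $S'=\bigcup_m F'_m\subseteq Q'$ with $\mu(S')=\mu(Q')$; since total measures agree and $S'\subseteq Q'$, they agree on each block $[0^e1]$, so $T_e:=\iota_e^{-1}(S')$ has $T_e\subseteq D_e$, $\mu(T_e)=\mu(D_e)$, and $T_e=\bigcup_m F''_{e,m}$ with $F''_{e,m}:=\iota_e^{-1}(F'_m)$ uniformly $\Pi^{0,A}_1$ in $(e,m)$. Each $F''_{e,m}$ is a compact subclass of $D_e$, so $b(e,m,k):=1+\max\{\Phi_e^X(k):X\in F''_{e,m}\}$ is finite and, by compactness, computable from $A$ uniformly in $(e,m,k)$; then $f(k):=1+\max\{b(e,m,k):\langle e,m\rangle\leq k\}$ is $\leq_T A$, and since almost every $X\in D_e$ lies in some $F''_{e,m}$, it dominates $\Phi_e^X$ for almost all $X\in D_e$. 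Summing these null exceptional sets over $e$, $f$ dominates every $g\leq_T X$ for almost every $X$, so $A$ is u.a.e.\ dominating.

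The routine ingredients will be the normal forms above and the verification that $Q'$ is genuinely $\Pi^0_2$. The hard part will be the $(2)\Rightarrow(1)$ direction: property $(2)$ supplies only a non-uniform $\Sigma^{0,A}_2$ witness for each individual $\Pi^0_2$ set, so to extract a single $A$-computable dominating function one is forced to amalgamate all the classes $D_e$ into one $\Pi^0_2$ set $Q'$ and then to squeeze a uniformly $A$-computable family of bounds $b(e,m,\cdot)$ out of the $\Pi^{0,A}_1$ pieces of $S'$ by compactness --- after which the elementary fact that a uniformly $A$-computable family of functions is dominated by one $A$-computable function does the rest. (As a byproduct --- or directly from Theorem~\ref{thm:simpson}, by extracting from a witness $S=\bigcup_m F_m$ a single $F_m$ of measure within $2^{-j}$ of $\mu(Q)$ --- property $(2)$ implies that $A$ is a.e.\ dominating, a useful consistency check.)
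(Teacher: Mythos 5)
The paper states this theorem as a cited result of Dobrinen and Simpson and gives no proof of its own, so there is nothing internal to compare against; judged on its own terms, your argument is correct and is essentially the standard one. Both directions check out: the $\liminf_n U_{n,f(n)}$ construction for $(1)\Rightarrow(2)$ is exactly right, and for $(2)\Rightarrow(1)$ the amalgamation of the classes $D_e$ into one $\Pi^0_2$ set $Q'$, the block-by-block measure comparison forced by $S'\subseteq Q'$, and the compactness extraction of the uniformly $A$-computable bounds $b(e,m,k)$ (the search for a level of the tree past which $\Phi_{e}^{\sigma}(k)$ has converged on every surviving node terminates because $F''_{e,m}\subseteq D_e$, including vacuously when $F''_{e,m}=\emptyset$) all work as claimed. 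No gaps.
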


Given these connections, it is reasonable to think that results in computability theory concerning a.e.~domination and u.a.e.~domination will have 
implications for the reverse mathematics content of the regularity properties stated above.  At the time of Dobrinen and Simpson \cite{dob:04}, 
several facts about u.a.e.~domination were already known:
\[
A \geq_T 0' \, \Rightarrow \, A \, \text{is u.a.e.~dominating} \, \Rightarrow \, A' \geq_T 0''.
\]
The first implication follows from a result of Kurtz \cite{kurtz:phd} that $0'$ is u.a.e.~dominating and the second implication follows from 
a result of Martin \cite{mar:66} that $A$ computes a function which dominates every computable function if and only $A' \geq_T 0''$.   
(A set $A$ for which $A' \geq_T 0''$ is called \textbf{high}.)   Furthermore, Dobrinen and Simpson \cite{dob:04} 
presented an unpublished proof by Martin that no computable set is a.e.~dominating.  

Several questions arise naturally from these implications.  Does 
every u.a.e.~dominating set compute $0'$?  Is every high degree u.a.e.~dominating or at least a.e.~dominating?  Is every a.e.~dominating 
degree high?  Does a.e.~domination imply u.a.e.~domination?

Cholak, Greenberg and Miller \cite{cho:ta} recently answered the first question in the negative 
by showing that there is a c.e.~set $A <_T 0'$ which is u.a.e.~dominating.  They also used their methods to show a number of results in 
reverse mathematics concerning the regularity property that for every $G_{\delta}$ set $Q$ there is an $F_{\sigma}$ set $S \subseteq Q$ 
such that $\mu(Q) = \mu(S)$.  In particular, this property does not imply $\text{ACA}_0$ even over $\text{WKL}_0$.  The fourth question 
remains open.  Concerning the second and third questions, Dobrinen and Simpson made the following conjecture.  

\begin{conj}[Dobrinen, Simpson]
\label{conj:aedom}
$A' \geq_T 0'' \Leftrightarrow A$ is a.e.~dominating.
\end{conj}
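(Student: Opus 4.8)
The statement is an equivalence, and I would prove the two implications by rather different means, expecting $A' \geq_T 0'' \Rightarrow A$ a.e.~dominating to be the substantial half.

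For $A$ a.e.~dominating $\Rightarrow A' \geq_T 0''$, I would first reformulate a.e.~domination. The set $B_A = \{X : \exists g \leq_T X \ \forall f \leq_T A \ (f \text{ does not dominate } g)\}$ is Borel and is invariant under finite modifications of $X$ (finitely altered reals are Turing equivalent), so its indicator depends on no single coordinate, $B_A$ lies in the tail $\sigma$-algebra, and by Kolmogorov's zero--one law $\mu(B_A) \in \{0,1\}$; hence $A$ is a.e.~dominating precisely when $\mu(B_A) = 0$. It thus suffices to show that if $A$ is not high then $\mu(B_A) = 1$, i.e.\ that almost every $X$ computes a function dominated by no $A$-computable function. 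Since $A$ is not high, Martin's theorem supplies, for each $f \leq_T A$, a computable function that $f$ fails to dominate; the plan is to diagonalize against all $f \leq_T A$ simultaneously, extracting from almost every $X$ a single witness $g_X \leq_T X$ and using that a suitably generic or random $X$ is ``independent'' of $A$. This extends Martin's unpublished argument for $A$ computable, where the diagonalizing obstructions are themselves computable, hence $\leq_T X$; the new content is to defeat the whole countable family $\{f : f \leq_T A\}$ on a positive-measure set of $X$, which by the zero--one law is then a measure-one set.

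For $A' \geq_T 0'' \Rightarrow A$ a.e.~dominating, I would reduce the claim (via the remarks after the definition, or via clause (2) of Theorem~\ref{thm:simpson}) to the following: for each Turing functional $\Phi_e$ and each $\epsilon > 0$ there is $f \leq_T A$ with $\mu(\{X : \Phi_e^X \text{ total and } f \text{ does not dominate } \Phi_e^X\}) < \epsilon$; summing these errors over $e$ (with $\epsilon 2^{-e}$ in place of $\epsilon$) and letting $\epsilon \to 0$ then yields $\mu(B_A) = 0$. The template is Kurtz's theorem that $0'$ is u.a.e.~dominating: $0'$ produces such an $f$ by taking $f(n)$ to be a stage by which the clopen approximation to the $\Sigma^0_1$ set $\{X : \Phi_e^X(n)\!\downarrow\}$ has captured all but $\epsilon 2^{-n-1}$ of its left-c.e.\ measure, so that off a set of measure $< \epsilon$ every value $\Phi_e^X(n)$ converges within $f(n)$ steps and $f$ dominates $\Phi_e^X$. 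The obstacle — and the crux of the whole proof — is that this construction evaluates left-c.e.\ reals exactly, so the $f$ it produces is only $0'$-computable, whereas a general high $A$ need not compute a function dominating every $0'$-computable function. What should rescue the argument is that a.e.~domination, unlike u.a.e.~domination, need not be uniform: distinct $X$ are allowed distinct $A$-computable dominating functions. Using $A' \geq_T 0''$ one can obtain a uniformly-$A$-computable family $(f_k)_{k \in \omega}$ converging pointwise — with no computable modulus of convergence — to Kurtz's $0'$-computable witness, and the task is to design the $f_k$ so that for almost every $X$ some single $f_k$ dominates every $g \leq_T X$. Ruling out the bad scenario, in which for a positive-measure set of $X$ every $f_k$ is too small on just those arguments relevant to that $X$, is where I expect the real ingenuity — presumably a construction of the $f_k$ more robust than the naive approximation — to be needed.
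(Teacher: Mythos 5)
You are attempting to prove a statement that the paper does not prove --- it is stated as a \emph{conjecture}, and the paper's main contribution is to \emph{refute} it. The direction you call ``the substantial half,'' $A' \geq_T 0'' \Rightarrow A$ a.e.~dominating, is false: Theorem \ref{thm:cehigh} constructs a high c.e.~set that is not a.e.~dominating, Theorem \ref{2} shows almost every degree is bounded by a high degree that is not a.e.~dominating, and Corollaries \ref{soup} and \ref{superhigh} show that every a.e.~dominating set $A \leq_T 0'$ must be super-high ($0'' \leq_{tt} A'$) while there exist high c.e.~sets that are not super-high. The obstacle you flag at the end of your sketch --- ruling out the scenario in which no single $A$-computable $f_k$ works on a positive-measure set of $X$ --- is not a technical gap awaiting a cleverer construction; it is exactly where the implication breaks. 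Highness gives $A$ a function dominating every \emph{computable} function, but says nothing about functions computable from a random oracle: for a $2$-random $X$ there is $B <_T X$ with $X$ c.e.~in $B$ (Theorem \ref{REA}), and the computation function of $X$ over $B$ is an $X$-computable function that no $A$-computable function can dominate unless $A \oplus B \geq_T X$ (Lemma \ref{comp}); forcing this join for a measure-one set of $X$ is precisely the strong condition ($0' \leq_{LR} A$, super-highness) that mere highness does not supply.

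Your $\Leftarrow$ direction also does not close. The zero--one law reformulation is fine, but everything after ``the plan is to diagonalize against all $f \leq_T A$ simultaneously'' is an unexecuted plan, and it is the entire content of that half: Martin's theorem hands you, for each single $f \leq_T A$, a computable function that $f$ fails to dominate, but these witnesses depend on $f$ and there is no uniform way to package countably many of them into one $g_X \leq_T X$ for almost all $X$. The paper itself only establishes this direction for $A \leq_T 0'$ (Theorem \ref{thm:delta2}), and by a completely different route: from a.e.~domination one extracts a $2$-random real $R$ that is c.e.~in $A$ (Theorem \ref{main}), identifies it as $\Omega_U^A = \Omega_V^{0'}$ (Lemma \ref{man}), and then uses $\Omega_U^A \oplus A \equiv_T A'$ to conclude $A' \geq_T 0''$; the full direction for arbitrary $A$ is left open. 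So neither half of your proposal is salvageable as written: one half is false, and the other is proved in the paper only under the additional hypothesis $A \leq_T 0'$ and by machinery ($\Omega$ numbers, van Lambalgen, $LR$-reducibility) absent from your sketch.
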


This conjecture is our main focus.  The strongest results appear in Section \ref{sec:delta2} where we show that if 
$A \leq_T 0'$ is a.e.~dominating, then $A$ is high (giving a partial answer to the $\Leftarrow$ direction of Conjecture \ref{conj:aedom})
and that for any a.e.~dominating set $Z$, every set which is 1-random relative to $Z$ is actually 2-random.  As a corollary (applying 
work of Nies \cite{nies:ta}), we obtain the stronger property that every a.e.~dominating set $A \leq_T 0'$ satisfies $0'' \leq_{tt} A'$, 
where $\leq_{tt}$ denotes truth table reducibility.  (Such sets are called \textbf{super high}.)  Because there are $\Delta^0_2$ 
(even $\Sigma^0_1$) sets which are high but not super high, this result refutes the $\Rightarrow$ direction of Conjecture \ref{conj:aedom}.  

Before arriving at Section \ref{sec:delta2}, we follow a meandering path to explore 
the connections between a.e.~domination and notions such randomness and genericity.  
Because relatively little is known about a.e.~domination, we approach this property from different angles and occasionally offer more than 
one proof of our results.  Hopefully, others will see additional connections and push this work towards a more complete understanding 
of this property.  

In Section \ref{sec:ce}, we give a direct construction of a high computably enumerable (c.e.) set $H$ which is not 
a.e.~dominating.  The construction combines Martin's technique for showing the computable sets are not a.e.~dominating with a standard technique for 
constructing high c.e.~degrees.  Noam Greenberg and Joe Miller independently obtained a similar (although not c.e.) result using a different 
method.  In Section \ref{sec:random}, we show that if $A$ is 2-random, then $A$ is not a.e.~dominating and hence the 
measure of all a.e.~dominating sets is 0.  Furthermore, we show that almost every degree is bounded by a high degree which 
is not a.e.~dominating.  It follows that there are $2^{\aleph_0}$ many counterexamples to the $\Rightarrow$ direction of Conjecture \ref{conj:aedom}.  
In Section \ref{sec:generic}, we prove that if $A$ is 2-generic (with respect to Cohen forcing), then $A$ is not a.e.~dominating.  Furthermore, 
we show that for any a.e.~dominating $A$, there is a 2-random $R$ whose degree is c.e.~in $A$.    

In Section \ref{sec:ideals}, we approach Conjecture \ref{conj:aedom} from the viewpoint of Turing ideals.  Suppose that an ideal $\mathcal{I}$ 
satisfies $\forall X \in \mathcal{I} \, \exists Y \in \mathcal{I} \, (X <_T Y \wedge X'' \leq_T Y')$.  
Must $\mathcal{I}$ be a Scott set (that is, contain a path through each infinite subtree of $2^{< \omega}$ contained 
in $\mathcal{I}$)?  In other words, must such an ideal be the second order part of an $\omega$-model of $\text{WKL}_0$?   
We show that for any computable tree $T \subseteq 2^{< \omega}$ without a computable infinite path, there is a ideal closed under the 
highness property above that does not contain a path through $T$.  Hence, this ideal is 
not a Scott set and does not give an $\omega$-model of $\text{WKL}_0$.  Along 
the same lines, we show that there is an ideal closed under this highness property that does not contain an a.e.~dominating set.    

Our notation is standard and mostly follows Soare \cite{soa:book}.  $\Phi_e$ denotes the list of partial computable functionals and we fix a computable 
bijection $\langle x,y \rangle$ between $\omega^2$ and $\omega$.  For $A \subseteq \omega$, let
$A^{[e]} = \{ x \, | \, \langle e,x \rangle \in A \}$ and for $m \in \omega$, let $[m,\infty) = \{ n \, | \, n \geq m \}$.  
We sometimes equate sets with reals from the interval $[0,1]$ by viewing a set $B \subseteq \omega$ as the real 
$\sum_{n \in B} 2^{-n}$.  For strings $\sigma$ and $\tau$, we write $\sigma \sqsubseteq \tau$ to indicate that $\sigma$ is an 
initial substring of $\tau$.  Similarly, if $F_1$ is a finite set and $F_2$ is a set, then we write $F_1 \sqsubseteq F_2$ if $F_1$ is 
an initial segment of $F_2$.   

\section{Computably enumerable example}
\label{sec:ce}

In this section, we give a direct construction of a high c.e.~set which is not a.e.~dominating.  In Section \ref{sec:delta2}, we give an 
alternate proof of Theorem \ref{thm:cehigh} using index sets.

\begin{thm}
\label{thm:cehigh}
There is a high c.e.~set which is not a.e.~dominating.
\end{thm}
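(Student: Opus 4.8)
The plan is to build a c.e. set $H$ in stages, meeting two families of requirements. To make $H$ high, we use the standard thickness/coding strategy: working with the columns $H^{[e]}$, we arrange that $0'' \leq_T H'$ by letting $H'$ decide, for each $e$, whether $W_e$ is cofinite (or whether some $\Pi^0_2$ predicate holds) via whether the $e$th column of $H$ is cofinite. Concretely, for each $e$ we reserve a computable sequence of potential markers in column $e$ and enumerate them into $H$ following the approximation to an appropriate $\Delta^0_3$-complete (equivalently $0''$-computable) object, so that $\lim_s$ behavior of the markers tells $H'$ the answer; this is the well-known construction of a high c.e. degree via making $\{ e : H^{[e]} =^* \omega \}$ as complicated as $0''$.

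At the same time we must defeat a.e. domination. Recall that $H$ is \emph{not} a.e. dominating means: the set of $X$ such that every $g \leq_T X$ is dominated by some $f \leq_T H$ has measure less than $1$; equivalently (by countable additivity over the functionals) there is some $\Phi_e$ and a positive-measure set of oracles $X$ for which the total function $\Phi_e^X$ is not dominated by $\Phi_i^H$ for any $i$. Following Martin's technique for showing computable sets are not a.e. dominating, the idea is to construct, uniformly in a sufficiently generic/random-looking oracle, a function that grows faster than any particular $\Phi_i^H$ on a set of oracles of measure bounded away from $0$. Martin's proof produces, for the computable case, a single $g$ (computable in $X$ for most $X$) escaping all $\Phi_i$ relative to the empty oracle by a measure-theoretic diagonalization: for each $i$ one shrinks by a small $\epsilon_i$ the measure of ``bad'' oracles and uses the recursion theorem / finite injury to stitch the escaping function together. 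Here the escaping function must beat $\Phi_i^H$, and since $H$ is changing, each requirement $R_i$ (``$\mu(\{X : \Phi_i^H \text{ dominates } \Phi_e^X\}) < 1 - \delta$'' for our fixed $e$ and some uniform $\delta>0$) must be able to act using the current approximation $H_s$ and be injured only finitely often by later $H$-changes on the use.

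The key steps, in order, are: (1) fix the framework — a priority list interleaving highness requirements $N_e$ and anti-domination requirements $R_i$, with $R_i$ of lower priority than $N_e$ for $e \le i$; (2) describe the $N_e$-strategy (marker enumeration into column $e$ guided by the $0''$-approximation) and note it acts in a "column-local" way so it does not disturb the finite uses that $R_i$ relies on — arrange the $R_i$ uses to query $H$ only on a region disjoint from the markers, or absorb $N_e$-changes as finitely many injuries; (3) describe the $R_i$-strategy: reserve a clopen set of oracles of measure $\ge \delta$ (with $\delta = 1/2$, say, or $\prod(1-2^{-i-2})$ bounded below) on which we force $\Phi_e^X$ to eventually exceed $\Phi_i^{H}$, by choosing the behavior of $\Phi_e$ on a large finite initial segment of such $X$ to overshoot whatever finite information about $\Phi_i^{H_s}$ has converged, exactly as in Martin's argument but relativized to the current $H_s$; (4) verify the conflicts resolve: each $R_i$ is injured only by the finitely many higher-priority $N_e$ and $R_j$ actions and by finitely many $H$-changes on its (finite) use, so it eventually stabilizes; (5) conclude: $H$ is high because the $N_e$ succeed, and $H$ is not a.e. dominating because the set of $X$ witnessing non-domination has measure $\ge \delta > 0$, so its complement does not have measure $1$.

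The main obstacle is the interaction in step (3)–(4): making the anti-domination diagonalization robust under the continual enumeration into $H$ demanded by highness. In Martin's original argument the oracle for the functionals being diagonalized against is fixed; here $\Phi_i^H$ can change its values on the reserved block of oracles whenever $H$ changes below the use, and the highness construction enumerates infinitely often into $H$. The resolution is the standard priority bookkeeping — only the finitely many $N_e$ with $e \le i$ outrank $R_i$, and for a fixed oracle-block and fixed running time the use of $\Phi_i^{H}$ is finite, so only finitely many markers can ever lie below that use; once those have settled, $R_i$'s action is permanent. Thus the real content is checking that one can simultaneously (a) keep the measure of each reserved oracle-block bounded below by a fixed positive amount summing the restrictions to stay $\ge \delta$, and (b) keep the finitely-many-injury bound honest despite the infinitary highness coding, which is exactly the hybridization the theorem statement advertises.
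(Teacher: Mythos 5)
Your overall architecture is the same as the paper's (highness via making the columns $H^{[e]}$ code a $\Pi^0_2$-complete set, plus a Martin-style measure-theoretic diagonalization that builds a functional escaping every $\Phi_i(H)$ on a set of oracles of measure bounded below), but the claimed conflict resolution has a genuine gap: this is not a finite-injury construction. A highness requirement $N_{e'}$ with $e'\le i$ whose column must be made cofinite enumerates markers into $H$ \emph{forever}, so it is an infinitary, higher-priority opponent of $R_i$. Your argument that ``for a fixed use only finitely many markers lie below it, and once they settle $R_i$'s action is permanent'' does not close the loop: $R_i$ must commit to a value $\Phi_e(X;x)>\Phi_i(H_s;x)$ at a finite stage, it has no effective way to know whether the markers below the current use have settled (that is a question about the infinite future of a $\Pi^0_2$ outcome), and each time the computation is destroyed the recovered computation has a new, possibly larger use with fresh markers below it, so the number of injuries to $R_i$ is not bounded. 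Nor can you ``arrange the $R_i$ uses to query $H$ only on a region disjoint from the markers'': the use of the given functional $\Phi_i$ on oracle $H$ is not under your control. The standard fix --- and the one the paper uses --- is a tree of strategies in which each anti-domination strategy guesses the limiting threshold $o_\beta$ of every higher-priority highness strategy above it and only \emph{believes} a computation $\Phi_i(H_s;x)$ once every marker below its use that the guessed outcome promises to enumerate is already in $H_s$; on the true path such believed computations are permanent.

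Two secondary points. First, securing one value $x$ with $\Phi_e(X;x)>\Phi_i(H;x)$ for each index $i$ does not by itself refute domination (domination only requires $f(n)>g(n)$ for all sufficiently large $n$); the paper recovers non-domination from single overshoots by a padding argument: for every index $i$ and every $n$ there is another index $i'$ computing a function that agrees with $\Phi_i(H)$ beyond $n$, so one overshoot against every index yields infinitely many against each fixed one. Your write-up should include this step or else arrange for each $R_i$ to overshoot infinitely often (which makes the injury analysis harder still). Second, the measure bookkeeping must charge a sacrificed block not per requirement but per strategy instance (including initialized/abandoned ones), with the sum over the whole construction kept below $\delta$; your $\prod(1-2^{-i-2})$ bound needs to be reworked accordingly.
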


To prove this theorem, we build a c.e.~set $H$ such that $\emptyset'' \leq_T H'$ and $H$ is not a.e.~dominating.
By negating the property of a.e.~domination, it suffices to prove there is a set $T \subseteq 2^{\omega}$ of positive measure such that 
$\forall X \in T \, \exists g \leq_T X \, \forall f \leq_T H \, (g \, \text{is not dominated by} \, f)$.

Fix any rational number $\delta$ such that $0 < \delta < 1$.  We build a partial computable functional $\Phi$ such that
$\mu(\{ X \, | \, \Phi(X) \, \text{is not total} \}) \leq \delta$ and we let $T = \{ X \, | \, \Phi(X) \, \text{is total} \}$.  Therefore, $\mu(T) \geq 1-\delta$ as required.
Furthermore, we ensure that for every $e \in \omega$ and for every $X \in T$, if $\Phi_e(H)$ is total, then $\Phi(X)$ is not dominated by $\Phi_e(H)$.
These properties suffice to prove the theorem.

There are two types of requirements: $R_e$ requirements which guarantee that 
$H$ is high and $M_e$ requirements which define the functional $\Phi$ and the set $T$.    
Our construction takes place on a tree of strategies which is described below.  If $\alpha$ is an $R_e$ or $M_e$ strategy, then we let 
$A^{[\alpha]}$ and $H^{[\alpha]}$ denote $A^{[e]}$ and $H^{[e]}$, $\langle \alpha, x \rangle$ denote $\langle e,x \rangle$ and 
$\Phi_{\alpha}$ denote $\Phi_e$.  

To make $H$ high, we use the following standard trick.  Let $\text{Tot}$ denote the
index set of total computable functions and let $A$ be a c.e.~set such that for all $e$,
if $e \in \text{Tot}$, then $A^{[e]} = \omega$ and if $e \not \in \text{Tot}$, then $A^{[e]}$ is a finite initial segment of $\omega$.  To make $H$ high, 
it suffices to build $H \subseteq A$ such that if $A^{[e]}$ is infinite then $A^{[e]} - H^{[e]}$ is finite.  It follows that if $e \in \text{Tot}$, 
then $H^{[e]}$ is cofinite (and hence $\lim_x H(\langle e,x \rangle) = 1$) and if $e \not \in \text{Tot}$, then $H^{[e]}$ is finite (and hence 
$\lim_x H(\langle e,x \rangle) = 0$).  Because $\text{Tot} = \lim_x H(\langle e,x \rangle)$, the Limit Lemma gives 
$\emptyset'' \equiv_T \text{Tot} \leq_T H'$.

Let $R_e$ denote the requirement that $H^{[e]} \subseteq A^{[e]}$ and $A^{[e]}$ infinite implies $A^{[e]} - H^{[e]}$ is finite.  
An $R_e$ strategy $\alpha$ operates under a finite restraint and maintains a parameter $o_{\alpha}$ which 
is larger than this restraint and which is only changed when some higher priority strategy raises its restraint and initializes $\alpha$.  
$\alpha$ acts as follows.
\begin{enumerate}
\item When $\alpha$ first acts or has been initialized, define $o_{\alpha}$ to be large and set $n = o_{\alpha}$.
\item Wait for $n$ to enter $A^{[\alpha]}$.
\item When $n$ enters $A^{[\alpha]}$, put $n$ into $H^{[\alpha]}$, increase the value of $n$ by 1 and return to Step 2.
\end{enumerate}
Clearly this strategy makes $H^{[\alpha]} \subseteq A^{[\alpha]}$.  
If $\alpha$ is on the true path, then the higher priority strategies initialize $\alpha$ only 
finitely often.  Therefore, the parameter $o_{\alpha}$ reaches a limit and every number in $A^{[\alpha]}$ larger that the final value of 
$o_{\alpha}$ enters $H^{[\alpha]}$.  

An $R_e$ strategy $\alpha$ has infinitely many possible outcomes: the numbers in $\omega$ (which denote the current 
value of $o_{\alpha}$) and $\text{Fin}$.  These outcomes are ordered by 
$n <_L \text{Fin}$ for every $n \in \omega$ and $n <_L m$ if $n > m$.  (That is, $\text{Fin}$ is the rightmost outcome and the numerical outcomes 
increase in value as they move to the left.)  The strategy takes outcome $o_{\alpha}$ each time it acts in Step 3 and it takes outcome 
$\text{Fin}$ otherwise.  If $A^{[\alpha]}$ is finite, then the strategy is eventually stuck in Step 2 forever and cofinitely often
takes the $\text{Fin}$ outcome.  On the other hand, as long as $\alpha$ is initialized only finitely often, if $A^{[\alpha]}$ is infinite, then 
there is a final value of $o_{\alpha}$ for which $\alpha$ takes outcome $o_{\alpha}$ infinitely often.   

The second type of requirement concerns building the partial computable functional $\Phi$ and the set $T$.  Globally, we need to make sure that
$\mu(T) \geq 1 - \delta$, and locally we let $M_e$ denote the requirement that if $\Phi_e(H)$ is total, then $\Phi(X)$ is not dominated by $\Phi_e(H)$ for any $X \in T$.
To avoid domination, it is enough to make sure that for each $e$ and each $X \in T$, there is at least one value $x$ such that $\Phi(X;x) > \Phi_e(H;x)$.  (See Lemma 
\ref{lem:Mmet} for a proof that this condition is sufficient.)  The action for a single $M_e$ strategy $\alpha$ proceeds as follows.
\begin{enumerate}
\item Pick a small value $\epsilon_{\alpha} = 1/2^{p_{\alpha}}$ for some large $p_{\alpha}$.  
(We discuss below how to choose this number, but in particular $\epsilon_{\alpha} < \delta$.)
\item Divide $2^{\omega}$ into $2^{p_{\alpha}}$ many disjoint clopen sets $U_1^{\alpha}, \ldots, U_{2^{p_{\alpha}}}^{\alpha}$ each of size $\epsilon_{\alpha}$.
\item Cycle through the $U_i^{\alpha}$ sets beginning with $i = 1$.
\begin{enumerate}
\item Pick a large value $x_i^{\alpha}$ and define $\Phi(X;x_i^{\alpha}) = 0$ for all $X \not \in U_i^{\alpha}$.
\item Wait for $\Phi_{\alpha}(H;x_i^{\alpha})$ to converge.
\item If $\Phi_{\alpha}(H;x_i^{\alpha})$ converges, then define $\Phi(X;x_i^{\alpha}) > \Phi_{\alpha}(H;x_i^{\alpha})$ for all $X \in U_i^{\alpha}$, 
increase $i$ by 1 and return to Step 3(a).  To preserve the
computation $\Phi_{\alpha}(H;x_i^{\alpha})$, restrain $H$ from changing below the use of this computation.
\end{enumerate}
\item If $i$ eventually runs through Step 3 for all the numbers $1, \ldots, 2^{p_{\alpha}}$ then stop the action for $\alpha$ and declare it satisfied.
\end{enumerate}

Consider what such a strategy does in isolation.  If $\Phi_{\alpha}(H)$ is total, then it runs through the cycle in Step 3 for each $i$ between $1$ and $2^p$ and defines
$\Phi$ such that $\forall X \, \exists x \, (\Phi(X;x) > \Phi_{\alpha}(H;x))$.  This action wins $M_e$.  
If $\Phi_{\alpha}(H)$ is not total, then there may be an $i$ between $1$ and $2^{p_{\alpha}}$ for which
$\Phi(H;x_i^{\alpha})$ does not halt.  In this case, $\alpha$ gets stuck in Step 3(b) during the $i^{\text{th}}$ cycle.  
Consequently, $\Phi(X;x_i^{\alpha})$ does not converge for any $X \in U_i^{\alpha}$ and so $\Phi(X)$ is not total for any $X \in U_i^{\alpha}$.  
However, because $\alpha$ does not progress past Step 3(b) of the $i^{\text{th}}$ cycle,
the sets $X \in U_i^{\alpha}$ are the only sets on which $\alpha$ causes $\Phi$ to be partial.  Therefore, $M_e$ is won trivially (since
$\Phi_{\alpha}(H)$ is not total) and $\alpha$ contributes 
a set of measure $\epsilon_{\alpha}$ on which $\Phi$ is not total.   In each of the two
cases described here, $\alpha$ imposes only finitely much restraint since it has only finitely many cycles to run through.

The action of $\alpha$ when $\Phi_{\alpha}(H)$ is not total tells us how to pick the values of $\epsilon$ during the construction.  Each strategy which acts for
an $M_e$ requirement potentially contributes a set of measure $\epsilon$ on which $\Phi$ is not total.  Therefore, we have to choose the values of
$\epsilon$ as we go through the construction so that the sum of these values (over all $e$) is less than $\delta$.

The outcomes for an $M_e$ strategy $\alpha$ are the numbers in $\omega$ (indicating the current restraint imposed by $\alpha$).  
These outcomes are ordered by $n <_L m$ if $n > m$.  (That is, the numerical values of the outcomes increase as they move left.)  

We define the tree of strategies by induction.  The empty string $\lambda$ is assigned to $R_0$.  If $\alpha$ is assigned to $R_e$,
then $\alpha*\text{Fin}$ and $\alpha*n$ (for $n \in \omega$) are assigned to $M_e$.  If $\alpha$ is assigned to $M_e$,
then $\alpha*n$ (for $n \in \omega$) is assigned to $R_{e+1}$.  In both cases, the outcomes are ordered as described above.   

How do the strategies interact?  Suppose $\alpha$ is an $M$ strategy and $\beta$ is an $R$ strategy.  If $\alpha$ is to the left of $\beta$, 
then $\beta$ is initialized whenever $\alpha$ acts.  In particular, when $\beta$ nexts acts, it picks a new value for $o_{\beta}$ 
which is larger than the restraint (if any) imposed by $\alpha$.  If $\alpha$ is an $M$ strategy such that $\alpha*m \sqsubseteq \beta$, then 
$\beta$ is not eligible to act until $\alpha$ has imposed restraint up to $m$.  Therefore, $o_{\beta}$ is chosen $> m$ and $\beta$ respects 
$\alpha$'s restraint.  On the other hand, if $\beta*\text{Fin} \sqsubseteq \alpha$, then each time $\beta$ puts a number into $H^{[\beta]}$, $\alpha$ 
is initialized.  If $\beta*\text{Fin}$ is on the true path, then eventually $\beta$ always takes outcome $\beta*\text{Fin}$.  Therefore, $\beta$ 
causes only finitely much injury to $\alpha$.  

The one nontrivial interaction is when $\beta*m \sqsubseteq \alpha$.  In this case, $\alpha$ is only eligible to act if $\beta$ sets $o_{\beta}$ to be $m$, 
and $\alpha$ guesses that $\beta$ will eventually put every number greater than $o_{\beta} = m$ into $H^{[\beta]}$.  Therefore, when $\alpha$ 
sees a convergent computation $\Phi_{\alpha}(H;x_i^{\alpha})$ with use $u$, it only believes the computation if every number $\langle \beta,x \rangle \leq u$ 
with $o_{\beta} = m \leq x$ is in $H$.  Because $\alpha$ believes that $\beta$ will place every such $x$ into $H^{[\beta]}$, $\alpha$ believes that any 
computation missing such a number will eventually be destroyed by the enumeration of $x$ into $H^{[\beta]}$.  Therefore, the general construction 
contains this minor modification for an $M$ strategy $\beta$.  

We now present the formal construction.  At stage 0, let $H_0 = \emptyset$.  At stage $s > 0$, we let strategies act beginning with the $R_0$ strategy $\lambda$
until we reach a strategy of length $s$.  Once we reach a strategy of length $s$, end the stage and initialize all strategies of lower priority than the last strategy
eligible to act.  Initializing an $R$ strategy $\alpha$ means canceling $o_{\alpha}$ and $n_{\alpha}$.  Initializing an $M$ strategy $\alpha$ means 
canceling $r_{\alpha}$ (the current restraint imposed by $\alpha$), $\epsilon_{\alpha}$ and $p_{\alpha}$, canceling the partition $U_i^{\alpha}$ and 
canceling all witnesses $x_i^{\alpha}$.  Any parameters not canceled by initialization retain their values at the next stage.
Once the initialization is done, we define $\Phi(X;y) = 0$ for all sets $X$ and all numbers $y \leq s$ which are not currently witnesses of the form 
$x_i^{\alpha}$ for some $M$ strategy $\alpha$.  (Formally, we choose a large value $k$ and let $\Phi(\sigma,y) = 0$ for all strings $\sigma$ of length $k$.)

When an $R$ strategy $\alpha$ is eligible to act, it acts as follows.  If $s$ is the first stage at which $\alpha$ is eligible to act or if $\alpha$ has been
initialized since it was last eligible to act, define $o_{\alpha}$ to be large and set $n_{\alpha} = o_{\alpha}$.  Check if $n_{\alpha} \in A^{[\alpha]}_s$.  
(We begin at this step if $n_{\alpha}$ is already defined.)  If not, then let $\alpha*\text{Fin}$ be the next strategy
eligible to act.  If so, then enumerate $n_{\alpha}$ into $H^{[\alpha]}$, increase $n_{\alpha}$ by 1 and let
$\alpha*o_{\alpha}$ be the next strategy eligible to act.

When an $M$ strategy $\alpha$ is eligible to act, it acts as follows.  If $s$ is the first stage at which $\alpha$ is eligible to act or if $\alpha$
has been initialized since it was last eligible to act then we need to define $r_{\alpha}$, $\epsilon_{\alpha}$ and $p_{\alpha}$.  Set $r_{\alpha} = 0$.  
(The parameter $r_{\alpha}$ denotes $\alpha$'s current level of restraint.)  Let $q$ be the sum of all $\epsilon_{\gamma}$ parameters defined by all
$M$ strategies $\gamma$ that have been eligible to act at any time during the construction so far.  In the
verification below, we prove that $q < \delta$.  Define $p_{\alpha}$ to be a large number so that $\epsilon_{\alpha} = 1/2^{p_{\alpha}}$ satisfies
$q + \epsilon_{\alpha} < \delta$.  Partition $2^{\omega}$ into $2^{p_{\alpha}}$ many disjoint clopen sets $U^{\alpha}_1, \ldots, U^{\alpha}_{2^{p_{\alpha}}}$
each of size $\epsilon_{\alpha}$.

Begin the cycles for $\alpha$ with this choice of $p_{\alpha}$.  (If $p_{\alpha}$ was already defined, we start the action of $\alpha$ wherever
it left off in this cycle procedure.)  Run cycles beginning with $i=1$ and proceeding through $i = 2^{p_{\alpha}}$.  The $i^{\text{th}}$ cycle acts as follows.
Pick a large value for the witness $x^{\alpha}_i$ when the cycle begins and define $\Phi(X;x^{\alpha}_i) = 0$ for all $X \not \in U^{\alpha}_i$.
Let $\beta_0, \ldots, \beta_{k-1}$ denote the $R$ strategies such that $\beta_j*m_j \sqsubseteq \alpha$ for some $m_j \in \omega$.  Check if 
$\Phi_{\alpha,s}(H_s;x_i^{\alpha})$ converges.  If not, let $\alpha*r_{\alpha}$ be the next strategy eligible to act.  If it does converge, then let $u$ 
be the use of the computation.  For each $0 \leq j \leq k-1$, check if for every number $\langle \beta_j,y \rangle \leq u$ with $m_j \leq y$, we have 
$y \in H_s^{[\beta_j]}$.  If not, then let $\alpha*r_{\alpha}$ be eligible to act (and $\alpha$ remains in the $i^{\text{th}}$ cycle 
when it is next eligible to act).  If so, then define $\Phi(X;x_i^{\alpha}) > \Phi_{\alpha}(H_s;x_i^{\alpha})$ for all $X \in U_i^{\alpha}$.  Redefine $r_{\alpha}$ 
to be the maximum of its old value and $u$, increase $i$ by 1 (so that $\alpha$ will begin the $(i+1)^{\text{st}}$ cycle when it is next eligible 
to act) and let $\alpha*r_{\alpha}$ be eligible to act.  If $i_{\alpha}$ reaches the value $2^{p_{\alpha}}+1$, then $\alpha$ performs no 
further actions (unless it is initialized) and takes outcome $\alpha*r_{\alpha}$ at all future stages.  

This completes the description of the formal construction.  A strategy $\alpha$ is on the true path 
if $\alpha$ is the leftmost strategy of length $|\alpha|$ which is eligible to act infinitely often.  
A stage at which $\alpha$ is eligible to act is called an $\alpha$ stage.  

\begin{lem}
$H \subseteq A$.
\end{lem}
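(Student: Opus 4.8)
The plan is to track the single mechanism by which elements are ever placed into $H$ and to check that it only fires on numbers already known to lie in $A$. First I would recall from the formal construction that a number enters $H$ at a stage $s$ in exactly one situation: some $R$ strategy $\alpha$, assigned to $R_e$, is eligible to act, has a defined parameter $n_{\alpha}$, and finds $n_{\alpha} \in A^{[\alpha]}_s$; in that case $n_{\alpha}$ is enumerated into $H^{[\alpha]}$, i.e.\ $\la e, n_{\alpha}\ra$ is put into $H^{[e]}$. No other clause changes $H$: the stage-$0$ clause sets $H_0 = \emptyset$, the initialization clauses only cancel parameters, the clauses of the form ``define $\Phi(X;y) = 0$'' affect only the functional $\Phi$, and an $M$ strategy merely chooses witnesses, defines values of $\Phi$, and updates its restraint $r_{\alpha}$ — it never enumerates anything into $H$. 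So the only thing requiring care is this exhaustiveness check; it is entirely routine bookkeeping against the written construction.

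Next I would use the fact that the test gating the enumeration, namely $n_{\alpha} \in A^{[\alpha]}_s$, is a test against the stage-$s$ approximation to the c.e.\ set $A$. Hence $A^{[\alpha]}_s \subseteq A^{[\alpha]}$, so every $n_{\alpha}$ that is enumerated into $H^{[\alpha]}$ already lies in $A^{[\alpha]}$. Since we identify an $R_e$ strategy with its index (so $A^{[\alpha]} = A^{[e]}$ and $H^{[\alpha]} = H^{[e]}$), each $R_e$ strategy puts numbers into $H^{[e]}$ only when they are witnessed in $A^{[e]}$.

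Finally I would assemble the pieces: fix $e$; every number that ever enters $H^{[e]}$ does so via some $R_e$ strategy and only after appearing in $A^{[e]}$, so $H^{[e]} \subseteq A^{[e]}$. Since every natural number has the form $\la e,x\ra$ for a unique pair $(e,x)$, taking the union over all $e$ gives $H \subseteq A$. I do not expect a genuine obstacle here — the content of the lemma is built directly into Step~3 of each $R_e$ strategy — and the only subtlety worth spelling out explicitly is the verification in the first paragraph that the $M$ strategies and the auxiliary $\Phi$-defining steps leave $H$ untouched.
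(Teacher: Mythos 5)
Your proposal is correct and is essentially the paper's own argument, just spelled out in more detail: the paper's entire proof is the observation that numbers are enumerated into $H$ only after they have entered $A$, which is exactly the gating condition $n_{\alpha} \in A^{[\alpha]}_s$ you identify.
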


\begin{proof}
Numbers are enumerated into $H$ only after they have entered $A$.  
\end{proof}

\begin{lem}
\label{lem:highmet}
If $\alpha$ is an $R$ strategy on the true path, then $o_{\alpha}$ reaches a limit $\hat{o}_{\alpha}$.  Furthermore, if $\alpha*\hat{o}_{\alpha}$ 
is on the true path, then $A^{[\alpha]} = \omega$ and $A^{[\alpha]} - H^{[\alpha]}$ is finite, while if $\alpha*\text{Fin}$ is on the true path, then 
$A^{[\alpha]}$ is finite.  
\end{lem}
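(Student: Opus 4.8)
The plan is to first pin down the value $\hat o_{\alpha}$ by a standard finite-injury argument, and then read off the behaviour of $A^{[\alpha]}$ and $H^{[\alpha]}$ from the monotone parameter $n_{\alpha}$. Since $o_{\alpha}$ is recomputed only when $\alpha$ is initialized, it suffices to show that $\alpha$, being on the true path, is initialized only finitely often; I would prove this simultaneously for all true-path strategies by induction on their length. A strategy $\alpha$ is initialized at stage $s$ only when some strategy strictly to the left of $\alpha$ acts, or when $\alpha$ properly extends the stage path (which happens only at the finitely many stages $s < |\alpha|$), or — in the one interaction highlighted above — when an $R$ strategy $\beta$ with $\beta * \text{Fin} \sqsubseteq \alpha$ enumerates a number into $H^{[\beta]}$ and thereby takes outcome $\beta * o_{\beta}$, which lies left of $\beta * \text{Fin}$. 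The first possibility occurs at only finitely many stages because $\alpha$ is on the true path (here I use the inductive hypothesis on the initial segments of $\alpha$ together with the fact that the stage path lies strictly left of $\alpha$ at only finitely many stages), and the third occurs finitely often because $\beta * \text{Fin}$ lies on the true path. So fix a stage $s_0$ after which $\alpha$ is never initialized. At the first $\alpha$-stage past $s_0$ the parameter $o_{\alpha}$ is assigned its final value $\hat o_{\alpha}$ and $n_{\alpha}$ is set to $\hat o_{\alpha}$; from then on $n_{\alpha}$ is nondecreasing and is incremented by $1$ precisely at those $\alpha$-stages where $\alpha$ enumerates its current $n_{\alpha}$ into $H^{[\alpha]}$ (taking outcome $\alpha * \hat o_{\alpha}$), and at all other $\alpha$-stages past $s_0$ the strategy $\alpha$ takes outcome $\alpha * \text{Fin}$.

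Now I would split on which successor of $\alpha$ lies on the true path. If $\alpha * \hat o_{\alpha}$ is on the true path, then $\alpha$ enumerates a number into $H^{[\alpha]}$ at infinitely many stages, so $n_{\alpha} \to \infty$. Since $\alpha$ increments $n_{\alpha}$ past a value $k \ge \hat o_{\alpha}$ only after $k$ has appeared in $A^{[\alpha]}$, every such $k$ eventually enters $A^{[\alpha]}$, so $A^{[\alpha]}$ is infinite; by the defining property of $A$ — that $A^{[e]}$ equals $\omega$ when $e \in \text{Tot}$ and is a finite initial segment of $\omega$ otherwise — an infinite $A^{[\alpha]}$ must be all of $\omega$. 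Moreover each such $k$ is placed into $H^{[\alpha]}$ by $\alpha$ after $s_0$, and $\alpha$ is never again initialized, so $A^{[\alpha]} - H^{[\alpha]} \subseteq [0, \hat o_{\alpha})$ is finite. If instead $\alpha * \text{Fin}$ is on the true path, then $\alpha$ takes outcome $\alpha * \hat o_{\alpha}$ only finitely often (that outcome lies left of $\alpha * \text{Fin}$), so $n_{\alpha}$ is incremented only finitely often and reaches a limit $\hat n_{\alpha}$; were $\hat n_{\alpha}$ ever to enter $A^{[\alpha]}$, then at the next $\alpha$-stage $\alpha$ would enumerate it into $H^{[\alpha]}$ and increment $n_{\alpha}$, contradicting the choice of $\hat n_{\alpha}$. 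Hence $\hat n_{\alpha} \notin A^{[\alpha]}$, so $A^{[\alpha]} \ne \omega$, and therefore $A^{[\alpha]}$ is a finite initial segment of $\omega$, in particular finite.

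I expect the first paragraph to be the only place needing genuine care: one must enumerate all the ways $\alpha$ can be initialized and confirm each is bounded, which is most cleanly organized as the customary induction along the true path showing that true-path strategies suffer only finite injury. Everything in the second paragraph is then bookkeeping on the monotone parameter $n_{\alpha}$, together with the single external input that each $A^{[e]}$ is either $\omega$ or a finite initial segment of $\omega$.
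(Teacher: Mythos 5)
Your proposal is correct and follows essentially the same route as the paper's proof: fix a stage after which $\alpha$ is never initialized (so $o_{\alpha}$ stabilizes), then track the monotone parameter $n_{\alpha}$ and use that each $A^{[e]}$ is either $\omega$ or a finite initial segment. You are merely more explicit than the paper about why true-path strategies are initialized only finitely often; the case analysis on the two outcomes is identical.
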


\begin{proof}
Let $s$ be the first $\alpha$ stage such that no strategy to the left of $\alpha$ is eligible to act after stage $s$ and hence $\alpha$ is never initialized after stage $s$.  
Because values of $o_{\alpha}$ are canceled only by initialization, $\alpha$ defines the final value for $o_{\alpha}$ at stage $s$.  

Once $o_{\alpha}$ has reached its limit $\hat{o}_{\alpha}$, there are only two possible outcomes for $\alpha$ to take at any future $\alpha$ 
stage: $\hat{o}_{\alpha}$ and $\text{Fin}$.  Because $\hat{o}_{\alpha}$ is greater than the restraint imposed on $\alpha$ by any $M$ strategy of 
higher priority, $\alpha$ is free to place any number bigger than $\hat{o}_{\alpha}$ which enters $A^{[\alpha]}$ into $H^{[\alpha]}$.  
Recall that $A^{[\alpha]}$ is either $\omega$ or a finite initial segment of $\omega$, that $\alpha$ places $n_{\alpha}$ into $H^{[\alpha]}$ and 
increments $n_{\alpha}$ (beginning at $\hat{o}_{\alpha}$) whenever it sees $n_{\alpha}$ enter $A^{[\alpha]}$, and that $\alpha$ only takes outcome 
$\alpha*\hat{o}_{\alpha}$ when it puts the current value of $n_{\alpha}$ into $H^{[\alpha]}$.  

Assume that $\alpha*\hat{o}_{\alpha}$ is eligible to 
act infinitely often.  This situation implies the interval $I = [\hat{o}_{\alpha},\infty)$ is contained in $A^{[\alpha]}$ and hence $A^{[\alpha]} = \omega$.  
Furthermore, each element of $I$ is placed into $H^{[\alpha]}$ and hence $A^{[\alpha]} - H^{[\alpha]}$ is finite.  
On the other hand, assume that $\alpha*\text{Fin}$ is on the true path.  In this case, there must be a value of $n_{\alpha}$ for which $\alpha$ never 
sees $n_{\alpha}$ enter $A^{[\alpha]}$ and hence $A^{[\alpha]}$ is a finite initial segment of $\omega$.  
\end{proof}

It follows from the previous two lemmas that each requirement $R_e$ is met by the construction, so $H$ has high degree.   

\begin{lem}
\label{lem:sum}
Let $\alpha$ be an $M$ strategy that is eligible to act at stage $s$.  Let $q_s^{\alpha}$ be the sum of all $\epsilon_{\gamma}$ parameters for all $M$ strategies
$\gamma$ that have been eligible to act at any point in the construction before $\alpha$ is eligible to act at stage $s$.  Then $q_s^{\alpha} < \delta$.
\end{lem}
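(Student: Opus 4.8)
The plan is to prove the lemma by induction on the number of times some $M$ strategy sets (or, after an initialization, resets) its $\epsilon$ parameter. Call each such occurrence a \emph{definition event}. Since at every stage only finitely many strategies are eligible to act, only finitely many definition events have occurred by the time $\alpha$ is eligible to act at stage $s$; enumerate them in order of occurrence as $e_1, e_2, \ldots$, let $\epsilon_{(k)}$ be the value of the $\epsilon$ parameter chosen at event $e_k$, and set $Q_k = \sum_{j \leq k} \epsilon_{(j)}$ with $Q_0 = 0$. First I would observe that $q_s^{\alpha}$, as defined in the statement, is precisely the sum of the $\epsilon_{(j)}$ over all definition events occurring before $\alpha$ acts at stage $s$: an $M$ strategy $\gamma$ carries no $\epsilon_\gamma$ until its first definition event, retains the chosen value until initialized, and the construction's running total accumulates every value $\gamma$ ever chose. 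Thus $q_s^{\alpha} = Q_k$ for the appropriate $k$, and it suffices to show $Q_k < \delta$ for all $k$.

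The core of the proof is then an induction on $k$ establishing the invariant $Q_k < \delta$. The base case $Q_0 = 0 < \delta$ holds since $\delta > 0$. For the step, assume $Q_k < \delta$ and consider the strategy $\gamma$ acting at event $e_{k+1}$: the quantity it computes as $q$ is exactly the sum of all $\epsilon$ parameters defined so far by $M$ strategies, i.e.\ $Q_k$, which is below $\delta$ by the inductive hypothesis. Hence $\delta - Q_k > 0$, so $\gamma$ can (and by the construction does) pick $p_\gamma$ large enough that $\epsilon_{(k+1)} = 1/2^{p_\gamma} < \delta - Q_k$, whence $Q_{k+1} = Q_k + \epsilon_{(k+1)} < \delta$. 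Combining this with the identification of the previous paragraph gives $q_s^{\alpha} = Q_k < \delta$, as required; this is also exactly what legitimizes the choice of $p_\alpha$ in the construction.

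I do not expect a genuine obstacle here; the one point deserving care is the bookkeeping in the first paragraph — namely that each definition event contributes exactly one term to the sum defining $q_s^{\alpha}$ (so that an $M$ strategy eligible to act but not initialized since its last action adds no new term, while initialization never deletes a term already counted), and that these terms are exactly the $\epsilon_{(j)}$. This is read directly off the formal construction, where $\epsilon_\gamma$ is (re)defined only on the first stage $\gamma$ is eligible to act after a (re)initialization, and where the running total is described as summing over all $\epsilon_\gamma$ parameters ever defined by $M$ strategies eligible to act so far. The rationality of $\delta$ and the restriction of the $\epsilon$'s to powers of $1/2$ are not needed for the bound itself, only to make the search for a suitable $p_\gamma$ effective.
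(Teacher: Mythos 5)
Your proof is correct and is essentially the paper's argument: the paper also proceeds by induction (on stages, with a subinduction on the strategies acting within a stage, which is just another way of linearizing your ``definition events''), using the invariant that the running total stays below $\delta$ because each new $\epsilon_{\gamma}$ is chosen with $q + \epsilon_{\gamma} < \delta$. Your extra bookkeeping remarks about initialization and re-definition are a fair elaboration of what the paper leaves implicit, but they do not change the approach.
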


\begin{proof}
This lemma follows by induction on $s$, and for each $s$ by a subinduction on the strategies which are eligible to act at stage $s$.  
If $\alpha$ defines $\epsilon_{\alpha}$ at stage $s$, then by induction $q_s^{\alpha} < \delta$, so $\alpha$ can 
define $\epsilon_{\alpha}$ such that that $q_s^{\alpha} + \epsilon_{\alpha} < \delta$.    
\end{proof}

Let $T = \{ X \, | \, \Phi(X) \, \text{is total} \}$ and let $q_s = q_s^{\alpha}$ where $\alpha$ is the last $M$ strategy eligible to act at stage $s$.  By Lemma 
\ref{lem:sum}, $q_s < \delta$ and hence $\lim_s q_s \leq \delta$.  In other words, the sum of all parameters $\epsilon_{\alpha}$ chosen by $M$ strategies 
during the construction is $\leq \delta$.  

\begin{lem}
$\mu(T) \geq 1 - \delta$.
\end{lem}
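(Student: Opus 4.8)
The plan is to bound the measure of $\overline{T}=\{X\in 2^{\omega}\mid \Phi(X)\text{ is not total}\}$ by $\delta$. First I would pin down exactly which $X$ lie in $\overline{T}$. At the end of every stage $s$ the construction defines $\Phi(X;y)=0$ for all $X$ and all $y\leq s$ that are not at that stage witnesses $x^{\alpha}_i$ of some $M$ strategy $\alpha$ (call this the clean-up step), and when $\alpha$ starts cycle $i$ it sets $\Phi(X;x^{\alpha}_i)=0$ for every $X\notin U^{\alpha}_i$. Hence a value $n$ can be left undefined on an oracle $X$ only if, from the stage at which it is chosen onward, $n$ is continuously the witness $x^{\alpha}_i$ of one fixed $M$ strategy $\alpha$ and one fixed cycle $i$ (so $\alpha$ is never again initialized after choosing it), and moreover $X\in U^{\alpha}_i$ while $\alpha$ never defines $\Phi$ on $U^{\alpha}_i$ at this witness. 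Thus $\overline{T}\subseteq\bigcup_{\alpha}B_{\alpha}$, the union ranging over $M$ strategies, with $B_{\alpha}$ defined next.

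Fix an $M$ strategy $\alpha$. If $\alpha$ is initialized infinitely often, then no value is ever a permanent witness of $\alpha$: each initialization cancels its witnesses, and the clean-up step at that same stage then assigns those (now non-witness) values the value $0$ on every oracle; so I set $B_{\alpha}=\emptyset$. Otherwise let $s_0$ be the last stage at which $\alpha$ is initialized; after $s_0$ the parameters $p_{\alpha}$, $\epsilon_{\alpha}$ and the partition $U^{\alpha}_1,\dots,U^{\alpha}_{2^{p_{\alpha}}}$ are frozen and $\alpha$ runs its cycles once and for all. Since a cycle is left (with the index incremented) only after $\Phi$ has been defined on every oracle at the current witness, there is at most one cycle $i_{\alpha}$ in which $\alpha$ chooses the witness $x^{\alpha}_{i_{\alpha}}$ but never defines $\Phi$ on $U^{\alpha}_{i_{\alpha}}$ at it; for earlier cycles the witness becomes fully defined, and no later witness is ever chosen, so those values are eventually handled by the clean-up step. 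I set $B_{\alpha}=U^{\alpha}_{i_{\alpha}}$ if such $i_{\alpha}$ exists and $B_{\alpha}=\emptyset$ otherwise. In all cases $\mu(B_{\alpha})\leq\epsilon_{\alpha}$, where $\epsilon_{\alpha}$ denotes the final value of this parameter for $\alpha$.

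It then remains to add up. By countable subadditivity of $\mu$,
\[
\mu(\overline{T}) \leq \sum_{\alpha}\mu(B_{\alpha}) \leq \sum_{\alpha}\epsilon_{\alpha},
\]
where the sums run over those $M$ strategies $\alpha$ with $B_{\alpha}\neq\emptyset$, each contributing the single value $\epsilon_{\alpha}$ it defines in its final run. Since these are among the $\epsilon$-parameters chosen over the course of the construction, whose total is $\leq\delta$ by Lemma~\ref{lem:sum} and the remark following it, we obtain $\mu(\overline{T})\leq\delta$, and hence $\mu(T)\geq 1-\delta$.

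The main obstacle, I expect, is the bookkeeping around initialization in the first two steps: one has to verify carefully that every value which ever serves as a witness of a strategy that is later re-initialized does acquire a value -- consistent with all earlier definitions of $\Phi$ -- on every oracle via the clean-up step, and that the only values escaping this are the permanent witness of the unique stuck cycle of a strategy that is initialized only finitely often. Once that point is settled, the measure estimate follows immediately from the bound on $\sum_{\alpha}\epsilon_{\alpha}$ already established before the statement of the lemma.
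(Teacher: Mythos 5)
Your proof is correct and follows essentially the same route as the paper: both arguments show that $\Phi(X;y)$ can be left undefined only when $y$ is the permanent witness of a permanently stuck cycle of a never-again-initialized $M$ strategy $\alpha$, confine the resulting failure set to the clopen set $U^{\alpha}_{i}$ of measure $\epsilon_{\alpha}$, and then invoke the bound $\sum\epsilon\leq\delta$ from Lemma~\ref{lem:sum}. The only difference is bookkeeping (you index the failure sets by strategy, the paper fixes $y$ and sums over $y$), and your explicit observation that each final incarnation has at most one stuck cycle is a point the paper leaves implicit.
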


\begin{proof}
At the end of each stage $s$, if $x \leq s$ and $x$ is not equal to the current value of an $x_i^{\alpha}$ parameter for some $M$ strategy 
$\alpha$, then we define $\Phi(X;x) = 0$ for all $X$.  Fix a number $y$ and calculate $\mu( \{ X \, | \, \Phi(X;y) \uparrow \})$.  If $\Phi(X;y) \uparrow$, 
then for each stage $s \geq y$, $y$ must be equal to the current value of some $x_i^{\alpha}$ parameter.  Because new values for these parameters 
are always chosen large (and are never reused), there must be a fixed $M$ strategy $\alpha$ and a fixed cycle number $i$ such that 
$y = x_i^{\alpha}$ at all stages $s \geq y$.  (In particular, $\alpha$ is never initialized after stage $y$.)  
When $\alpha$ chose $x_i^{\alpha} = y$, it defined $\Phi(Y;y) = 0$ for all $Y \not \in U_i^{\alpha}$.  Therefore, if $\Phi(X;y) \uparrow$, then 
$X \in U_i^{\alpha}$.  Because $\mu(U_i^{\alpha}) = \epsilon_{\alpha}$, we have that either $\Phi(X;y)$ converges for all $X$ or 
$\mu(\{ X \, | \, \Phi(X;y) \uparrow \}) = \epsilon_{\alpha}$.  Summing over all $y$, we see that 
$\mu(\{ X \, | \, \Phi(X) \, \text{not total} \, \})$ is bounded by the sum of all values of $\epsilon_{\beta}$ chosen over the course of the construction 
by all $M$ strategies $\beta$.  As noted above, this sum is $\leq \delta$.
\end{proof}

\begin{lem}
\label{lem:total}
If $\alpha$ is an $M$ strategy on the true path such that $\Phi_{\alpha}(H)$ is total, then
\[
\forall X \, \exists x \, (\Phi(X;x) > \Phi_{\alpha}(H;x)).
\]
\end{lem}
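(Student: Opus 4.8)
The plan is to show that, with $\alpha$ on the true path and $\Phi_\alpha(H)$ total, $\alpha$ eventually completes all $2^{p_\alpha}$ of its cycles, and that in the $i$-th cycle it defines $\Phi(X;x_i^\alpha)$ to be strictly larger than the \emph{true} value $\Phi_\alpha(H;x_i^\alpha)$ for every $X\in U_i^\alpha$; since $U_1^\alpha,\dots,U_{2^{p_\alpha}}^\alpha$ partition $2^\omega$, every $X$ lies in some $U_i^\alpha$ and the witness $x=x_i^\alpha$ then works. First I would fix an $\alpha$-stage $s_0$ after which $\alpha$ is never initialized — one exists since $\alpha$ is on the true path — so that from $s_0$ on the parameters $p_\alpha$, $\epsilon_\alpha$, the partition, and each witness $x_i^\alpha$ (once chosen) are permanent and $\alpha$ restarts its cycle procedure. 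I would also record, using Lemma~\ref{lem:highmet}, what the relevant $R$-strategies $\beta_0,\dots,\beta_{k-1}$ (those with $\beta_j*m_j\sqsubseteq\alpha$) do: since $\beta_j*m_j$ is on the true path, the true outcome of $\beta_j$ is the numerical value $m_j=\hat{o}_{\beta_j}$, so $A^{[\beta_j]}=\omega$, $\beta_j$ eventually enumerates every number $\geq m_j$ into $H^{[\beta_j]}$, and (its counter $n_{\beta_j}$ never dropping below $m_j$ after $s_0$) $\beta_j$ never enumerates a number $\langle\beta_j,y\rangle$ with $y<m_j$ after $s_0$.

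The two key claims are: (a) every computation $\alpha$ \emph{believes} after $s_0$ is correct, i.e.\ if $\alpha$ acts on $\Phi_{\alpha,s}(H_s;x_i^\alpha)\!\downarrow$ with use $u$ then $\Phi_\alpha(H;x_i^\alpha)=\Phi_{\alpha,s}(H_s;x_i^\alpha)$; and (b) $\alpha$ never remains forever in Step~3(b) of a cycle. For (a) I would show that no number $\leq u$ enters $H$ after stage $s$. Strategies of priority lower than $\alpha$ respect the restraint $r_\alpha\geq u$ imposed by $\alpha$ at stage $s$: those to the right of $\alpha$ are initialized when $\alpha$ acts, and those extending $\alpha$ through $\alpha*r_\alpha$ choose their enumeration witnesses larger than $r_\alpha$, so after $s$ all of them act only on numbers exceeding $u$. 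An $R$-strategy $\gamma$ with $\gamma*\text{Fin}\sqsubseteq\alpha$ or $\gamma<_L\alpha$ cannot act at all after $s_0$ without initializing $\alpha$ — in the first case because an enumeration by $\gamma$ would force it onto a numerical outcome to the left of $\text{Fin}$, in the second because $\gamma$ being eligible already places the eligible path to the left of $\alpha$. Finally the $\beta_j$'s are handled by the believability test itself: when $\alpha$ believed the computation it checked that every $\langle\beta_j,y\rangle\leq u$ with $m_j\leq y$ was already in $H_s$, and since $\beta_j$ fills its column in increasing order from $m_j$ its counter is already past all such $y$, so every later $\beta_j$-enumeration exceeds $u$. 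Hence $H\restrict u=H_s\restrict u$, and as $\Phi_\alpha(H;x_i^\alpha)$ converges it must agree with the stage-$s$ value.

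For (b), fix $i$ and suppose $\alpha$ is in cycle $i$ at an $\alpha$-stage $\geq s_0$ with witness $x_i^\alpha$; let $u^*$ be the use of the convergent computation $\Phi_\alpha(H;x_i^\alpha)$. Since $H\restrict u^*$ and the stage approximation to $\Phi_\alpha(H;x_i^\alpha)$ both stabilize, and since every $\langle\beta_j,y\rangle\leq u^*$ with $m_j\leq y$ eventually enters $H$, there is a late $\alpha$-stage at which — if $\alpha$ is still in cycle $i$ — the computation has appeared and passes the believability test, so $\alpha$ acts and moves to cycle $i+1$. As the cycle counter of $\alpha$ is nondecreasing after $s_0$ and bounded by $2^{p_\alpha}+1$, it is forced to reach $2^{p_\alpha}+1$, i.e.\ $\alpha$ completes all cycles. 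Combining (a) and (b): for each $i\leq 2^{p_\alpha}$ there is a stage $s_i\geq s_0$ at which $\alpha$ set $\Phi(X;x_i^\alpha)>\Phi_{\alpha,s_i}(H_{s_i};x_i^\alpha)=\Phi_\alpha(H;x_i^\alpha)$ for all $X\in U_i^\alpha$, and this value is never revised (the witness is fresh and $\alpha$ is never again initialized). Given $X\in 2^\omega$, pick $i$ with $X\in U_i^\alpha$ and take $x=x_i^\alpha$.

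I expect the main work to go into part (a), the verification that a believed computation cannot be overturned; this is essentially the routine tree-of-strategies bookkeeping of who initializes whom and why off-path and lower-priority strategies pick their numbers large enough, but it has several cases. The genuinely essential point is that the believability test exactly neutralizes the interference from the highness strategies $\beta_j$, while Lemma~\ref{lem:highmet} guarantees that the $\beta_j$ eventually stop interfering, so that $\alpha$ can always complete its cycles.
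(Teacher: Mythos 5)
Your proof is correct and follows essentially the same route as the paper's: fix a stage after which $\alpha$ is never initialized, use Lemma~\ref{lem:highmet} to see that each $\beta_j$ with $\beta_j*m_j\sqsubseteq\alpha$ eventually fills $[m_j,\infty)$ into $H^{[\beta_j]}$ so that a believable computation always appears and each cycle terminates, and then verify preservation of a believed computation by the same case split (left strategies cannot act, the $\beta_j$'s are neutralized by the believability test, $\text{Fin}$-guessed $R$ strategies cannot enumerate without moving the path left, lower-priority strategies respect the new restraint). Your explicit separation into claims (a) and (b) is just a cleaner packaging of the argument the paper gives for cycle $1$ and then repeats for the remaining cycles.
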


\begin{proof}
Assume $\alpha$ is on the true path and $\Phi_{\alpha}(H)$ is total.  Let $\beta_0, \ldots, \beta_{k-1}$ be the $R$ strategies 
such that $\beta_j*m_j \sqsubseteq \alpha$ for some $m_j \in \omega$.  Because each 
$\beta_j*m_j$ is on the true path, if $\hat{o}_{\beta_j}$ denotes the final value of the parameter ${o}_{\beta_j}$, then 
$\hat{o}_{\beta_j} = m_j$.  By the proof of Lemma \ref{lem:highmet}, the interval 
$[m_j,\infty)$ is contained in $H^{[\beta_j]}$ for each $0 \leq j \leq k-1$.  

Let $s$ be the first $\alpha$ stage after which $\alpha$ is never 
initialized.  At stage $s$, $\alpha$ defines the parameters $p_{\alpha}$ and $\epsilon_{\alpha}$, sets $r_{\alpha}=0$ and defines the 
partition $U_i^{\alpha}$.  After these definitions, $\alpha$ begins its first cycle for defining $\Phi$.  It chooses $x_1^{\alpha}$ and defines 
$\Phi(X, x_1^{\alpha}) = 0$ for all $X \not \in U_1^{\alpha}$.   
At each $\alpha$ stage $t \geq s$, $\alpha$ checks whether $\Phi_{\alpha,t}(H_t;x_1^{\alpha})$ converges and if so 
whether each number $\langle \beta_j,y \rangle$ below the use with 
$m_j \leq y$ is in $H_t$.  Because $\Phi_{\alpha}(H)$ is total and each interval $[m_j,\infty) \subseteq H^{[\beta_j]}$, 
$\alpha$ must eventually see a convergent computation which meets this criterion.  When $\alpha$ sees an appropriate 
computation at stage $t \geq s$, it defines $\Phi(X;x_1^{\alpha}) > \Phi_{\alpha,t}(H_t;x_1^{\alpha})$ and redefines its restraint $r_{\alpha}$ to be 
greater than the use of $\Phi_{\alpha,t}(H_t;x_1^{\alpha})$.  

Since $t \geq s$, no strategy to the left of $\alpha$ is ever eligible to act after $t$, so none of these strategies can place a number 
into $H$ which will destroy the $\Phi_{\alpha,t}(H_t;x_1^{\alpha})$ computation.  The $\beta_j$ strategies have already placed all the 
numbers below the use into $H^{[\beta_j]}$ so they will not destroy this computation.  Any $R$ strategy $\beta$ with 
$\beta*\text{Fin} \sqsubseteq \alpha$ never places any more elements into $H$ since if it did, the path would move to the left of 
$\alpha$ contradicting the fact that $t \geq s$.  Finally, all strategies of lower priority that $\alpha$ respect $\alpha$'s new restraint.  
Therefore, $\Phi_{\alpha}(H;x_1^{\alpha}) = \Phi_{\alpha,t}(H_t;x_1^{\alpha})$ and we have met the condition of this lemma for 
all $X \in U_1^{\alpha}$.

We repeat the same argument for $\alpha$'s remaining cycles to see that for each $U_i^{\alpha}$, there is a witness 
$x_i^{\alpha}$ such that $\Phi(X;x_i^{\alpha}) > \Phi_{\alpha}(H;x_i^{\alpha})$ for all $X \in U_i^{\alpha}$.  Since the $U_i^{\alpha}$ 
partition $2^{\omega}$, we have established the lemma.
\end{proof}

\begin{lem}
\label{lem:Mmet}
Each requirement $M_e$ is met.
\end{lem}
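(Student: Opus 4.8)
\section*{Proof proposal for Lemma~\ref{lem:Mmet}}

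The plan is to derive the non-domination property defining $M_e$ from two facts already available: Lemma~\ref{lem:total}, which handles a single $M$-strategy on the true path, and the fact that every requirement $M_{e'}$ is assigned to a node of the (infinite) true path. First I would dispose of the trivial case: if $\Phi_e(H)$ is not total then the hypothesis of $M_e$ fails and the requirement holds vacuously. So assume $\Phi_e(H)$ is total and set $f = \Phi_e(H)$, a total function with $f \leq_T H$. Fix an arbitrary $X \in T$; since $\Phi(X)$ is total there really is a function $\Phi(X) \leq_T X$ attached to $X$, and the goal is to show $f$ does not dominate $\Phi(X)$, i.e. that $\Phi(X;x) > f(x)$ for infinitely many $x$.

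The key extra ingredient is padding. There are infinitely many indices $e_0 < e_1 < \cdots$ with $\Phi_{e_k} = \Phi_e$ as oracle functionals, so $\Phi_{e_k}(H) = f$ is total for every $k$. For each $k$ let $\alpha_k$ be the node of the true path assigned to $M_{e_k}$ (the true path is infinite and, by the layout of the tree of strategies, passes through exactly one node assigned to each $M_{e_k}$). Since $\Phi_{\alpha_k}(H) = \Phi_{e_k}(H) = f$ is total, Lemma~\ref{lem:total} applies to $\alpha_k$ and, specialized to our fixed $X$, produces a witness $x^{(k)}$ — namely the parameter $x_i^{\alpha_k}$ for the cycle $i$ with $X \in U_i^{\alpha_k}$ — satisfying $\Phi(X;x^{(k)}) > \Phi_{e_k}(H;x^{(k)}) = f(x^{(k)})$.

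Finally I would check that the $x^{(k)}$ are unbounded. The nodes $\alpha_k$ are pairwise distinct (distinct requirements occupy distinct nodes of the true path), and throughout the construction the witness parameters $x_i^{\alpha}$ are always chosen large and are never reused, so witnesses belonging to distinct strategies are distinct. Hence $\{x^{(k)} \mid k \in \omega\}$ is an infinite set of natural numbers, and therefore unbounded. Consequently, for every $m$ there is a $k$ with $x^{(k)} > m$ and $\Phi(X;x^{(k)}) > f(x^{(k)})$, which is exactly the statement that $f$ does not dominate $\Phi(X)$. Since $X \in T$ was arbitrary and $f = \Phi_e(H)$, requirement $M_e$ is met.

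I do not anticipate a real obstacle: the substantive work is done in Lemma~\ref{lem:total}, and the only point needing care is the bookkeeping that turns ``one good witness per index'' into ``infinitely many, hence unbounded, good witnesses'' — which rests entirely on padding (infinitely many indices for the single function $f$) together with the ``chosen large, never reused'' discipline on the $x_i^{\alpha}$ parameters.
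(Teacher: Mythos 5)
Your proof is correct, but it takes a genuinely different route from the paper's. The paper argues by contradiction with a single auxiliary index: if $\Phi_e(H)$ dominated $\Phi(X)$ with threshold $n$, it builds an index $e'$ with $\Phi_{e'}(Z;x) = \Phi(X;x)+1$ for $x < n$ and $\Phi_{e'}(Z;x) = \Phi_e(Z;x)$ for $x \geq n$ (using that $X \in T$, so the finitely many values $\Phi(X;x)$ for $x<n$ are defined), so that $\Phi_{e'}(H)$ is total and exceeds $\Phi(X)$ at \emph{every} argument --- directly contradicting the single witness produced by Lemma~\ref{lem:total} for the $M_{e'}$ strategy on the true path. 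You instead use padding to get infinitely many indices for $\Phi_e$, apply Lemma~\ref{lem:total} once per index, and argue that the resulting witnesses are pairwise distinct (by the ``chosen large, never reused'' discipline, which the paper does state in its measure computation) and hence unbounded, giving a direct proof that domination fails. Both arguments rest on the same two pillars: Lemma~\ref{lem:total} and the fact that every $M$-index is assigned to a node of the (infinite) true path. The paper's version trades your combinatorial bookkeeping about witness values for a small recursion-theoretic construction (finite modification of a functional, uniformly in the oracle); yours avoids any auxiliary functional but must invoke the witness-selection convention of the construction, so it is marginally less self-contained relative to the lemma statements alone, though arguably more transparent about \emph{why} the requirement holds. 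One small point to make explicit if you write this up: the witnesses $x^{(k)} = x_{i_k}^{\alpha_k}$ you extract are the \emph{final} values of those parameters (chosen after $\alpha_k$ is last initialized), and distinctness follows because each new witness is chosen larger than every number previously mentioned in the construction.
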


\begin{proof}
Assume that some requirement $M_e$ is not met.  $M_e$ is not met means that $\Phi_e(H)$ is
total and that for some set $X \in T$, $\Phi(X)$ is dominated by $\Phi_e(H)$.  Fix $n$ such that for all $x > n$, $\Phi(X;x) < \Phi_e(H;x)$.
Let $e'$ be an index for a partial computable functional such that for all sets $Z$ and all numbers $x$, if $x < n$, then
$\Phi_{e'}(Z;x) = \Phi(X;x)+1$ and if $n \leq x$, then $\Phi_{e'}(Z;x) = \Phi_e(Z;x)$.  (Since $X \in T$, the computations
$\Phi(X;x)$ for $x < n$ are defined, so we are just fixing the same finite initial segment of $\Phi_{e'}(Z)$ for every $Z$.)
Let $\alpha'$ be the $M_{e'}$ strategy on the true path.  We have that $\Phi_{\alpha'}(H)$ is total and for all $x$,
$\Phi(X;x) < \Phi_{\alpha'}(H;x)$.  These facts directly contradict Lemma \ref{lem:total}.
\end{proof}

\section{Random examples}
\label{sec:random}

In this section, we show that almost every degree is not a.e.~dominating and that 
almost every degree is bounded by a high degree that is not a.e.~dominating.   
In contrast with the c.e.~set $H$ of the last section, all of the examples here satisfy some degree of 
randomness and hence have DNR (diagonally nonrecursive) degree.

\begin{defn}
A \textbf{Martin-L\"of test} relative to a set $A$ is a sequence $\la U_n:n\in\omega\ra$ of 
$\Sigma^{0,A}_1$ classes which is uniform in $A$ such that $\mu(U_n) \leq 2^{-n}$ for each $n$.  
$R$ is \textbf{$n$-$A$-random ($n$-random relative to $A$)} if for each
Martin-L\"of test relative to $A^{(n-1)}$ (the $n-1^{\text{st}}$ jump of $A$), we have $R\not\in
\bigcap_n U_n$. If $A$ is computable we say that $R$ is $\mathbf{n}$\textbf{-random}.  
$R$ is \textbf{weakly $n$-$A$-random} if for each $\Sigma^{0,A}_n$ class $C$ of
measure 1, we have $R \in C$.
\end{defn}

Notice two consequences of these definitions: if $R$ is $n$-$A$-random then $R$ is weakly $n$-$A$-random and if 
$R$ is $n$-$A$-random for some $A$ then $R$ is $n$-random.  For more information 
about randomness (including various equivalent definitions), see Kautz \cite{kautz:phd}, 
Kurtz \cite{kurtz:phd} or the online manuscript of Downey and Hirschfeldt \cite{dow:online}.  One of the 
fundamental results about randomness that we will use repeatedly is the following.  

\begin{thm}[Martin-L\"{o}f \cite{ml:66}]
\label{thm:fund}  
For any $A$ and $n \geq 1$, the measure of the class of all $n$-$A$-random sets is 1.
\end{thm}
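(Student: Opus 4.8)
The plan is to show that the complement---the class $N$ of sets that are \emph{not} $n$-$A$-random---is a countable union of null sets, hence null, so that the $n$-$A$-random sets form a class of measure $1$. Fix $A$ and $n \geq 1$ and write $B = A^{(n-1)}$. By the definition above, $R \in N$ if and only if there is some Martin-L\"of test $\la U_m : m \in \omega \ra$ relative to $B$ with $R \in \bigcap_m U_m$; thus $N = \bigcup \bigcap_m U_m$, where the outer union ranges over all Martin-L\"of tests relative to $B$.

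First I would observe that each individual test contributes a null set: since $\bigcap_m U_m \subseteq U_k$ and $\mu(U_k) \leq 2^{-k}$ for every $k$, we get $\mu(\bigcap_m U_m) \leq \inf_k 2^{-k} = 0$.

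Next, the union defining $N$ is a priori indexed by a class of objects rather than a set, but it can be replaced by a union over $\omega$ via the standard trimming device. Every sequence of $\Sigma^{0,B}_1$ classes uniform in $B$ is coded by some index $e \in \omega$, giving classes $V^e_m$ with a uniform $B$-enumeration. For each $e$, define $U^e_m$ by running the enumeration of $V^e_m$ as an increasing union of basic clopen cylinders but refusing to add the next cylinder whenever doing so would push the measure of the part enumerated so far above $2^{-m}$. Then $\la U^e_m : m \in \omega \ra$ is $\Sigma^{0,B}_1$ uniformly in $e$ and satisfies $\mu(U^e_m) \leq 2^{-m}$, so it is a genuine Martin-L\"of test relative to $B$ for every $e$; moreover, if $e$ already codes a genuine test, then at each stage the running union is a subset of $V^e_m$ and so has measure at most $\mu(V^e_m) \leq 2^{-m}$, no trimming ever occurs, and $U^e_m = V^e_m$ for all $m$. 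Hence every Martin-L\"of test relative to $B$ occurs among the $\la U^e_m \ra_m$, and therefore $N = \bigcup_{e \in \omega} \bigcap_m U^e_m$.

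Combining the two points, $N$ is a countable union of null sets, so $\mu(N) = 0$ by countable subadditivity, and the class of $n$-$A$-random sets has measure $1$. The only step requiring any care is the trimming construction: one must verify that it yields a family that is $\Sigma^{0,B}_1$ uniformly in $e$, that the measure bound holds at the limit, and that genuine tests are left unchanged; the remainder is routine measure theory. Nothing in the argument uses $n$ beyond the fact that $A^{(n-1)}$ is a well-defined oracle, so it is uniform in both $A$ and $n$.
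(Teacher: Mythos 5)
Your argument is correct and is the standard proof of Martin-L\"of's theorem: the non-random sets form a union, over the countably many indices $e$ of uniformly $\Sigma^{0,B}_1$ sequences (made into genuine tests by the trimming device), of the null sets $\bigcap_m U^e_m$. The paper itself states this result as a citation to Martin-L\"of and gives no proof, so there is nothing to compare against; your write-up, including the verification that trimming preserves genuine tests and keeps the family uniformly $\Sigma^{0,B}_1$, is complete and would serve as a proof.
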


We now state the main result of this section which we will prove at the end of this section.  

\begin{thm} 
\label{2}
Each 4-random degree is bounded by a high 2-random degree that is not almost everywhere dominating. 
\end{thm}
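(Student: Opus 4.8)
The plan is to replace the given degree by a $4$-random set $Z$ in it (in fact only the $2$-randomness of $Z$ is needed below) and then to build, above $Z$, a set $R$ that is both $2$-random and high. Such an $R$ automatically meets the third requirement, since no $2$-random set is a.e.\ dominating (as established earlier in the paper). The observation that makes highness manageable is that it suffices to arrange $0'' \le_T R \oplus 0'$: indeed $0'' \le_T R \oplus 0' \le_T R'$, so $R$ is high. Thus everything reduces to coding $0''$ into a set above $Z$ in a way that does not spoil $2$-randomness.

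That is exactly what the Ku\v{c}era--G\'acs coding theorem accomplishes, applied in the universe relative to the oracle $B := Z \oplus 0'$. First I would invoke the relativized form of that theorem to obtain a set $S$ which is $1$-random relative to $B$ and from which $0''$ can be recovered with oracle $B$, that is, $0'' \le_T S \oplus Z \oplus 0'$. (The content of Ku\v{c}era--G\'acs is that the bit positions used to code $0''$ can be made sparse enough that coding them is undetectable by the universal $\Sigma^{0,B}_1$ test, so $S$ stays $B$-$1$-random.) Then I would put $R := Z \oplus S$.

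The verification of the four properties of $R$ is then short:
\begin{enumerate}
\item $R \ge_T Z$ trivially.
\item $R$ is $2$-random: by van Lambalgen's theorem relativized to the oracle $0'$, $Z \oplus S$ is $1$-random relative to $0'$ --- i.e.\ $2$-random --- precisely when $Z$ is $2$-random and $S$ is $1$-random relative to $Z \oplus 0'$; the first holds by choice of $Z$, the second by choice of $S$.
\item $R$ is high, because $0'' \le_T S \oplus Z \oplus 0' \le_T R \oplus 0' \le_T R'$.
\item $R$ is not a.e.\ dominating, since it is $2$-random.
\end{enumerate}
Consequently $\deg(R)$ is a high $2$-random degree above $\deg(Z)$, which is what the theorem asserts.

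The step I expect to need the most care is the relativized Ku\v{c}era--G\'acs coding: one must check that the coding locations form a $\Pi^{0,B}_1$ family located uniformly in $B$ (so that decoding $0''$ is genuinely recursive in $S \oplus B$) and that $S$ is tested for randomness against the universal $\Sigma^{0,B}_1$ test, so that it really is $1$-random over $B = Z \oplus 0'$ and not over some weaker oracle. The other ingredient, the relativization of van Lambalgen's theorem to $0'$ used in step~(2), is routine: relativizing everything to $0'$ turns ``$1$-random relative to $X$'' into ``$1$-random relative to $X \oplus 0'$'', and van Lambalgen's argument goes through unchanged.
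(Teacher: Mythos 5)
Your proof is correct, but it inverts the paper's decomposition. The paper fixes a single high $2$-random set $R_0=\Omega_U^{0'}\leq_T 0''$ \emph{below} $0''$ and joins the given random set on top of it, forming $R=R_0\oplus R_1$; the price is that $R_1$ must be $2$-random relative to $R_0$, which is guaranteed by requiring $R_1$ to be $4$-random (so that it is $2$-random relative to $0''\geq_T R_0$). You instead build the coding set $S$ \emph{above} the given set, making it $1$-random relative to $Z\oplus 0'$ by construction, so the only hypothesis on $Z$ is $2$-randomness. Your version therefore proves the strictly stronger statement that every $2$-random degree is bounded by a high $2$-random degree that is not a.e.\ dominating. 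Two small remarks. First, you do not need the relativized Ku\v{c}era--G\'acs coding theorem at all: Lemma \ref{lem:omegaprop} already supplies exactly the set you want, namely $S=\Omega_U^{Z\oplus 0'}$, which is $1$-random relative to $Z\oplus 0'$ and satisfies $S\oplus Z\oplus 0'\equiv_T(Z\oplus 0')'\geq_T 0''$; this makes the argument self-contained within the paper's toolkit (and is how Proposition \ref{prop:noae} proceeds). Second, note that the form of van Lambalgen you invoke (everything relativized to the oracle $0'$, so that the second summand need only be $1$-random relative to $Z\oplus 0'$) is not literally Theorem \ref{lamb} as stated, which for $n=2$ would ask for $1$-randomness relative to $Z'$; your relativization is standard and correct, but worth flagging since $Z\oplus 0'$ can be strictly weaker than $Z'$.
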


\begin{cor}
Almost every degree is bounded by a high degree which is not a.e.~dominating.  
\end{cor}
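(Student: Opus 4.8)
The plan is to start with a $4$-random set $G$ and find, below a modest oracle, a $2$-random set $R \geq_T G$ whose degree is high but not a.e.\ dominating. The natural candidate is to take $R$ to be a $2$-random relative to $G$ that additionally codes $G$ and is engineered to have high degree; highness should come for free by taking $R$ to compute a function dominating all $R$-computable... no — rather, we want $R' \geq_T 0''$, so the cleanest route is to build $R$ as (a real coding) a join or a ``slow-down'' of $G$ together with $0'$ information, then verify $2$-randomness survives because $G$ is sufficiently random over $0'$ (here is where $4$-randomness of $G$ enters: relativizing Theorem \ref{thm:fund} and the fact that a $4$-random is $2$-random relative to $0'$, one gets enough genericity of $G$ over the lower cone to push a Martin-L\"of test argument through). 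So the first step is to make precise the coding construction producing $R$ with $G \leq_T R$, $R$ $2$-random, and $R$ high.

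Second, I would show $R$ is \textbf{not} a.e.\ dominating. The tool is Theorem \ref{thm:simpson}: $R$ is a.e.\ dominating iff for every $\Pi^0_2$ class $Q$ and every $\epsilon>0$ there is a $\Pi^{0,R}_1$ class $F\subseteq Q$ with $\mu(F)\geq\mu(Q)-\epsilon$. To refute this it suffices to exhibit a single $\Pi^0_2$ class $Q$ of measure $1$ (so $\mu(Q)=1$) such that no $\Pi^{0,R}_1$ class of measure $\geq 1-\epsilon$ is contained in $Q$; equivalently, every positive-measure $\Sigma^{0,R}_1$ (i.e.\ effectively open relative to $R$) class meets the $\Sigma^0_2$ complement $2^\omega \setminus Q$. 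The right choice of $Q$ is (the complement of) a universal relative Martin-L\"of-style object: essentially, take $2^\omega\setminus Q$ to be a $\Sigma^0_2$ class that every $R$-effectively-open set of measure close to $1$ is forced to intersect, which is exactly the statement that $R$ cannot a.e.\ cover. Here the $2$-randomness of $R$ is the key: a $2$-random set cannot ``avoid'' the relevant $\Sigma^0_2$ null-complement uniformly, and one formalizes this with a relativized version of the argument that $2$-random sets fail a.e.\ domination (the same phenomenon used elsewhere in the paper for $2$-random sets).

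Third, assemble the pieces: $R$ is $2$-random, $R$ is high, $R$ is not a.e.\ dominating, and $\deg(R) \geq \deg(G)$. This gives Theorem \ref{2}. The main obstacle I anticipate is the simultaneous control of highness and non-a.e.-domination while preserving $2$-randomness — highness forces $R$ to compute fast-growing functions, which is in tension with $R$ not being a.e.\ dominating, and preserving $2$-randomness constrains how much one may hard-code into $R$. Resolving this tension is precisely why one needs the oracle $G$ to be as random as $4$-random rather than merely $2$-random: the extra randomness of $G$ over $0''$ is what lets the coding of $0'$-information into $R$ be done ``generically enough'' that the $\Sigma^{0,R}_1$ covering attempts still fail.

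**Proof of the Corollary.**

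By Theorem \ref{thm:fund} (with $A$ computable, $n=4$), the class of $4$-random sets has measure $1$, so almost every set is $4$-random and hence almost every degree contains (indeed, is) a $4$-random. By Theorem \ref{2}, every such degree is bounded by a high degree that is not a.e.\ dominating. Since a measure-$1$ set of degrees is bounded by degrees of the stated kind, the conclusion follows.
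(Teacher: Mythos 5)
Your deduction of the Corollary itself is exactly the paper's: by Theorem \ref{thm:fund} almost every set is 4-random, and Theorem \ref{2} then bounds each 4-random degree by a high, 2-random, non-a.e.-dominating degree. That final step is correct and is all the paper does for the Corollary.

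The bulk of your proposal, however, is a sketch of Theorem \ref{2}, and there your route diverges from the paper and has a genuine gap. The paper's proof is short: take $R_0$ to be a high 2-random set below $0''$ (Theorem \ref{high2rand}, via $\Omega^{0'}_U$), and set $R = R_0 \oplus R_1$ for the given 4-random $R_1$. Highness and $R \geq_T R_1$ are immediate; since $R_0 \leq_T 0''$ and $R_1$ is 4-random, $R_1$ is 2-random relative to $R_0$, so van Lambalgen (Theorem \ref{lamb}) gives that $R$ is 2-random, and Theorem \ref{2randnotAED} then gives that $R$ is not a.e.~dominating. Your sketch gestures at a similar join-plus-randomness-preservation structure but leaves the coding unspecified, and, more seriously, your second step proposes to re-derive the failure of a.e.~domination by a covering argument through Theorem \ref{thm:simpson} (``every positive-measure $\Sigma^{0,R}_1$ class meets the $\Sigma^0_2$ complement''). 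Nothing you write justifies that assertion: it is essentially Theorem \ref{2randnotAED} itself, whose proof in the paper requires Kautz's theorem that a 2-random is c.e.~in a strictly weaker set, the computation-function lemma, Stillwell's lemma, and van Lambalgen again. If you simply cite Theorem \ref{2randnotAED}, which precedes the Corollary in the paper, the gap disappears; as written, it is the missing heart of the argument. A smaller point: the relativization you need is that a 4-random is 2-random relative to $0''$ (not merely relative to $0'$), since the high 2-random $R_0$ lives below $0''$.
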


Towards this theorem, as we would like to find degrees that are not almost everywhere dominating,
we need examples of functions that are hard to dominate but are nevertheless computable by a sufficiently (to be specified later) random
oracle.  That is, suppose we fix functions $f_R \leq_T R$ for each sufficiently random set $R$.  By Theorem \ref{thm:fund}, the measure 
of such $R$ is 1.  Let $A$ be any a.e.~dominating set and let $S$ be a class of sets of measure 1 such that every function computable 
from an element of $S$ is dominated by some function computable from $A$.  Because $S$ has measure 1 and the collection of 
sufficiently random $R$ has measure 1, some such $R$ must be in $S$.  Therefore, 
the a.e.~dominating set $A$ must compute a function $g$ which dominates some $f_R$ function.  
If we can make the $f_R$ functions hard to dominate, we can use them to construct examples of sets which 
are not a.e.~dominating.  We begin with the following theorem.  (Kurtz \cite{kurtz:phd} proved that the class of sets $R$ such that 
there is a set $B <_T R$ for which $R$ is c.e.~in $B$ has measure 1 and Kautz \cite{kautz:phd} later strengthen this result to 
Theorem \ref{REA}.)  

\begin{thm}[Kautz \cite{kautz:phd}]
\label{REA}
If $R$ is 2-random, then there is a set $B$ such that $B <_T R$ and $R$ is c.e.~in $B$.  
\end{thm}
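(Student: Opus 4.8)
The plan is to build $B$ directly by a construction recursive in $R$, so that $B\le_T R$ holds automatically, arranging two families of requirements. The \emph{coding} requirements make $R$ computably enumerable in $B$. Reserve an infinite recursive coding region of $\omega$, say the columns $\{\la n,m\ra : m\in\omega\}$, and, running with oracle $R$, at each stage $s$ put, for each $n\in R$ with $n<s$ not yet coded, one fresh large marker $\la n,m_n\ra$ into $B$; for $n\notin R$, never put anything into the $n$-th column. Then $R=\{n : \exists m\,(\la n,m\ra\in B)\}$ is $\Sigma^{0,B}_1$, so $R$ is c.e.\ in $B$, while $B\le_T R$ since the markers are placed by an $R$-recursive rule and each $n\in R$ is coded by stage $n+1$. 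The \emph{diagonalization} requirements are $N_e\colon \Phi_e^B\neq\chi_R$; meeting all of them gives $R\not\le_T B$, hence $B<_T R$. The slack that makes $N_e$ satisfiable is that a marker may always be taken so large that it does not disturb any fixed finite set of computations we have chosen to protect, so coding never permanently blocks diagonalization.

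The strategy for $N_e$ is injury-free. Using the oracle $R$, watch for a stage $s$ and an $n$ with $\Phi_{e,s}^{B_s}(n)\downarrow \neq R(n)$; the first time one appears, protect that computation by forcing every later marker above its use. If no such opportunity ever occurs on the path built, then $\Phi_e^B$ converges only to values agreeing with $\chi_R$, so the sole way $N_e$ can fail is that $\Phi_e^B$ is total, i.e.\ $\Phi_e^B=\chi_R$. Kurtz's measure-theoretic argument --- which is what shows a measure-one set of reals has the property being proved --- bounds, for each $e$, the measure of the class $\mathcal N_e$ of $R$ for which this happens: one randomizes the marker choices and estimates the chance that $\Phi_e$ can be fooled into converging everywhere and correctly despite the marker for some $n$ being placed outside the part of $B$ that $\Phi_e$ ever queries, and finds this chance to be $0$. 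Hence each $\mathcal N_e$ is null; for $R$ outside every $\mathcal N_e$ a genuinely $R$-recursive successful marker rule can be isolated by an $R$-recursive nested-interval search, which recovers Kurtz's theorem.

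To sharpen this to every $2$-random $R$, note that $\mathcal N_e$ is not only null but $\Pi^0_2$: since $B$ is $R$-recursive uniformly in $R$, the condition $\Phi_e^B=\chi_R$ unwinds to $\forall n\,(\Phi_e^B(n)\downarrow\, = R(n))$, a $\Pi^0_2$ predicate of $R$. A $\Pi^0_2$ null class contains no weakly $2$-random real, hence no $2$-random real; as this holds for every $e$, every $2$-random $R$ lies outside all $\mathcal N_e$, so the construction meets every $N_e$ and yields $B<_T R$ with $R$ c.e.\ in $B$. The delicate step is the machinery behind the second paragraph: organizing the coding and the $N_e$ on a priority list so that the ``take the marker large'' device really does insulate them from one another, proving the nullity of $\mathcal N_e$ with enough uniformity to also read off its $\Pi^0_2$ bound, and extracting an honestly $R$-recursive witness rather than settling for the measure-one conclusion. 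The positive side --- that $R$ is c.e.\ in the resulting $B$ and that $B\le_T R$ --- is routine.
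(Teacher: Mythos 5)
The paper does not prove this statement; it is quoted from Kautz's thesis (with the measure-one version credited to Kurtz), so there is no internal proof to compare against. Your scaffolding is the natural one --- code $R$ positively into $B$ by markers so that $R$ is c.e.\ in $B$ and $B\le_T R$, diagonalize to get $R\not\le_T B$, and reduce the whole problem to showing that each failure class $\mathcal N_e=\{X:\Phi_e^{B(X)}=\chi_X\}$ is a null $\Pi^0_2$ class --- and the final step (a 2-random, being weakly 2-random, avoids every null $\Pi^0_2$ class) is correct. The $\Pi^0_2$ bound is also fine, since $X\mapsto B(X)$ is a total Turing functional.

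The gap is that the nullity of $\mathcal N_e$ is the entire content of the theorem, and you assert it (``Kurtz's measure-theoretic argument \dots finds this chance to be $0$'') rather than prove it. Worse, for the construction as you describe it the claim is genuinely in doubt: the marker for $n$ is placed by a \emph{deterministic} rule from $R\restrict(n+1)$ and the restraint history, so there is a fixed functional $\Phi_{e_0}$ that attempts to invert that rule --- inductively compute from $B$ the candidate position of the marker for $n$ using the already-decoded $R\restrict n$ and a reconstruction of which restraints could have acted, then test membership. Nothing in your argument rules out that $\Phi_{e_0}^{B(X)}=\chi_X$ on a positive-measure (conceivably co-null) set of $X$, in which case $B\equiv_T R$ and the construction fails. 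Your parenthetical about ``randomizing the marker choices'' is the right instinct, but it contradicts the deterministic construction of the first paragraph: once the choices are randomized, $B$ is no longer determined by $R$ alone, the $\Pi^0_2$ computation of $\mathcal N_e$ no longer applies as stated, and one must explain where the random bits come from and how $B\le_T R$ survives. Reconciling these --- e.g.\ splitting $R=R_0\oplus R_1$, letting $R_0$ supply the coin flips for a probabilistic ``risking'' construction over $R_1$, proving the measure estimate on the bad seeds, and invoking van Lambalgen's theorem so that a 2-random $R$ can serve as its own source of randomness --- is precisely the delicate core of the Kurtz--Kautz argument, and it is missing here.
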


We will combine this theorem with the following simple observation.  Let $R$ be any set and suppose that $R$ 
is c.e.~in $B$.  For any fixed index $e$ such that $R = W_e^B$, we can define the computation function 
$c$ for $R$ relative to this index $e$ by 
\[
c(x)=\mu s(W^B_{e,s}\restrict x=W^B_e\restrict x = R \restrict x).  
\]
Typically, we will abuse notation by suppressing the index $e$ and referring to $c$ as ``the'' computation function for $R$ as a 
c.e.~set in $B$.

\begin{lem}
\label{comp}
If $R$ is c.e.~in $B$ and $f$ dominates the computation function for $R$ as a c.e.~set in $B$, then
$f \oplus B \geq_T R$.
\end{lem}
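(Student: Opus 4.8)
The plan is to show that given $f$ dominating the computation function $c$ for $R = W_e^B$, we can decide membership in $R$ using $f \oplus B$ as an oracle. The key point is that $f$ tells us, for each $x$, a stage by which the approximation $W_{e,s}^B \restrict x$ has stabilized to its final value $R \restrict x$.

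First I would fix $m$ such that $f(x) > c(x)$ for all $x > m$; since $f \oplus B$ computes $f$, and since only finitely much information is needed to handle the inputs $x \leq m$, we may without loss of generality assume $f(x) > c(x)$ for all $x$ (absorbing the finite exception into a finite modification, or simply noting that $R \restrict (m+1)$ is a finite string that can be hard-coded). Then, to decide whether $n \in R$, the algorithm on oracle $f \oplus B$ does the following: compute $f(n+1)$, then use the oracle $B$ to run the enumeration $W_{e}^B$ for $f(n+1)$ many stages, obtaining $W_{e, f(n+1)}^B \restrict (n+1)$, and output its value on $n$. The correctness follows because $f(n+1) > c(n+1) = \mu s (W_{e,s}^B \restrict (n+1) = R \restrict (n+1))$, so by stage $f(n+1)$ the first $n+1$ bits of the approximation have already reached their final correct values; in particular the bit at position $n$ is correct. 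This entire procedure is computable from $f \oplus B$, so $R \leq_T f \oplus B$, i.e. $f \oplus B \geq_T R$.

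The only mild subtlety — and the one place worth stating carefully rather than the main obstacle — is the bookkeeping around the finitely many inputs on which $f$ might fail to dominate $c$: one must make sure the reduction is genuinely total and correct everywhere, which is handled by the standard trick of building the finite initial segment of $R$ into the algorithm as a finite table (this is legitimate since we are proving a $\leq_T$ statement, not a uniform one). Aside from that, the argument is a direct unwinding of the definition of the computation function, so there is no real obstacle here; the lemma is essentially the observation that a dominating function for the computation function serves as a modulus of convergence for the $B$-c.e.\ approximation to $R$.
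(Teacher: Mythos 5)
Your proof is correct and is essentially the paper's own argument: the paper simply notes that for sufficiently large $x$, $R \restrict x = W_{e,f(x)}^B \restrict x$, which is exactly your reduction with the finite exceptional segment hard-coded. No issues.
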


\begin{proof} 
Assume that $e$ is the index relative to which the computation function is defined.  Because $f$ dominates the 
computation function, for sufficiently large $x$ we have $R \restrict x = W_e^B \restrict x = W_{e,f(x)}^B \restrict x$.
\end{proof}

For any 2-random set $R$, fix $B_R$ and $f_R$ such that $B_R <_T R$, $R$ is c.e.~in $B_R$ and $f_R$ is the 
computation function for $R$ as a set c.e.~in $B_R$.  Since $f_R \leq_T B_R \oplus R$ and $B_R <_T R$, we have 
$f_R \leq_T R$.  Therefore, any a.e.~dominating set $A$ must be able to compute a function $g$ which dominates some 
$f_R$.  Hence, for some 2-random $R$, we must have $A \oplus B_R \geq_T R$.  In other words, any a.e.~dominating set 
must join some predecessor of some 2-random $R$ above $R$.  Stillwell \cite{still} showed that sufficiently random sets do not 
have this property.

\begin{lem}[Stillwell \cite{still}]
\label{still}
For any $X$,$Y$,$G$, if $X\not\le_T Y$ and $G$ is weakly 2-$X\oplus Y$-random, we have $X\not\le_T
Y\oplus G$.
\end{lem}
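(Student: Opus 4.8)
The plan is to argue by contraposition: assuming $X \le_T Y \oplus G$ via some Turing functional $\Phi$, I will show that $G$ fails to be weakly 2-$(X \oplus Y)$-random, i.e.\ that $G$ lies in some $\Sigma^{0,(X\oplus Y)'}_2$ class of measure $1$ whose complement has measure $0$ — equivalently, $G$ lies in a $\Pi^{0,(X\oplus Y)'}_2$ null class. The idea is that if $X \le_T Y \oplus G$ but $X \not\le_T Y$, then the functional $\Phi$ witnessing the reduction cannot already be ``decided'' by $Y$ alone, so $G$ must supply genuinely new information; a measure-theoretic majority-vote argument will show that the set of oracles $Z$ for which $\Phi(Y \oplus Z) = X$ must be small, and in fact null, while still being arithmetically definable from $(X \oplus Y)'$.

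First I would fix a functional $\Phi$ with $\Phi(Y \oplus G) = X$ and consider, for each $n$, the clopen-in-$Z$ event $E_n = \{ Z : \Phi(Y \oplus Z) \restrict n \text{ converges and equals } X \restrict n \}$. These form a decreasing sequence with $G \in \bigcap_n E_n$, and each $E_n$ is $\Sigma^{0, X\oplus Y}_1$ uniformly. The key step is to show $\mu(\bigcap_n E_n) = 0$. Suppose not; then by Lebesgue density there is a string $\sigma$ such that the relative measure of $\bigcap_n E_n$ inside the cylinder $[\sigma]$ is close to $1$, say $> 1/2$. Then one can compute $X$ from $Y$ alone by a majority vote: to determine $X \restrict n$, search for the first $m$ and the first finite union of cylinders $V$ (refining $[\sigma]$) on which more than half (by measure) of the strings $\tau$ of length $m$ satisfy ``$\Phi(Y \oplus \tau) \restrict n$ converges'', and output the value $X \restrict n$ agreed upon by a measure-majority of those $\tau$. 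Because $\bigcap_n E_n$ has large relative measure in $[\sigma]$, for large $m$ most length-$m$ extensions of $\sigma$ do converge and agree with $X \restrict n$, so the majority value is correct; this search is computable in $Y$, giving $X \le_T Y$, a contradiction. Hence $\mu(\bigcap_n E_n) = 0$.

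It remains to check the arithmetical complexity: $\bigcap_n E_n$ is a $\Pi^{0,X\oplus Y}_2$ class, and the statement ``$\mu(\bigcap_n E_n) = 0$'' lets us convert it into a $(X\oplus Y)'$-computable Martin-L\"of-style test — more precisely, since each $E_n$ is $\Sigma^{0,X\oplus Y}_1$ with measure computable as a limit from $(X\oplus Y)'$, the nested sequence $\la E_n \ra$ is itself a (weak) $\Sigma^{0,X\oplus Y}_2$ null class, so $G \in \bigcap_n E_n$ contradicts weak 2-$(X\oplus Y)$-randomness of $G$. This completes the contrapositive.

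The main obstacle I expect is making the density/majority-vote argument fully rigorous while keeping everything uniform and at the right level of the arithmetical hierarchy: one has to be careful that the search for the density point $\sigma$ and for the convergent-majority witnesses is genuinely $Y$-computable (for the $X \le_T Y$ contradiction) and separately that the whole null class is $\Sigma^{0,X\oplus Y}_2$ (for the randomness contradiction), and that the measures involved are approximable from the correct oracle. The clopen events $E_n$ depending on both $Y$ and $Z$, rather than just $Z$, is the feature that forces attention here, but since $Y$ is a fixed parameter throughout it does not raise the complexity.
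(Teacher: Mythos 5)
The paper itself gives no proof of this lemma --- it is quoted directly from Stillwell --- so there is no in-paper argument to compare against; what you have written is the standard Sacks-style ``majority vote'' proof of the underlying zero--one law (if $\mu\{Z : \Phi(Y\oplus Z)=X\}>0$ then $X\le_T Y$), and that is indeed the right route. Your overall structure is correct: $G\in\bigcap_n E_n$, each $E_n$ is uniformly $\Sigma^{0,X\oplus Y}_1$, the intersection is null by the density/majority argument, and the complement of $\bigcap_n E_n$ is then a $\Sigma^{0,X\oplus Y}_2$ class of measure one omitting $G$, contradicting weak 2-$(X\oplus Y)$-randomness exactly as the paper defines it.

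Two points need tightening. First, the voting procedure as you state it can fail: if you stop at the first stage at which more than half of $[\sigma]$ (by measure) has $\Phi(Y\oplus\tau)\restrict n$ converged and take the majority value among the converged strings, an adversarial convergence order can make the wrong value win (e.g.\ relative measure $0.3$ converged to a wrong string and $0.25$ to $X\restrict n$ at that stage, even though the correct set has relative measure $0.6$). The standard fix is to wait instead until some \emph{single} value $v$ satisfies $\mu\{Z\supseteq\sigma : \Phi(Y\oplus Z)\restrict n\downarrow=v\}>\tfrac12\mu([\sigma])$; such a $v$ exists (the correct one, eventually), and no incorrect value can ever reach relative measure $\tfrac12$ because its complement in $[\sigma]$ contains the correct set. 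This search is $Y$-computable, with $\sigma$ supplied nonuniformly. Second, your complexity bookkeeping in the last paragraph drifts to classes relative to $(X\oplus Y)'$ and to Martin-L\"of tests; neither is needed. Since each $E_n$ is $\Sigma^{0,X\oplus Y}_1$ uniformly in $n$, the class $2^\omega-\bigcap_n E_n$ is already $\Sigma^{0,X\oplus Y}_2$, has measure $1$, and does not contain $G$ --- which is precisely the failure of weak 2-$(X\oplus Y)$-randomness under the paper's definition. With those repairs the proof is complete.
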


From Lemma \ref{still} and the comments above, it follows that if $G$ is 2-random relative to every  
2-random set $R$, then $G$ is not a.e.~dominating.  Unfortunately, there is no such set $G$.  (Suppose there is 
such a $G$.  Let $R = G$ and notice that $R$ is 2 random, but $G$ cannot be 2-random 
relative to $R = G$.)  However, van Lambalgen \cite{lamb} showed that a set $X$
can be random relative to every set that is random relative to $X$, and this turns out to be enough
to prove Theorem \ref{2randnotAED}.

\begin{thm}[van Lambalgen \cite{lamb}]
\label{lamb}
Let $n\ge 1$. If $A$ is $n$-random relative to $B$, and $B$ is $n$-random, then $B$ is $n$-random
relative to $A$ and $A\oplus B$ is $n$-random.
\end{thm}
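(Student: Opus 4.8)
The plan is to derive the statement from the Martin-L\"of-randomness form of van Lambalgen's theorem relativized to an arbitrary oracle, together with the fact that sufficiently random reals have low jumps. Throughout, unwind the definition: ``$X$ is $k$-$Y$-random'' means exactly ``$X$ is Martin-L\"of random relative to $Y^{(k-1)}$'', which I abbreviate to ``$X$ is $Z$-random'' with $Z=Y^{(k-1)}$. The two ingredients are: (I) for every oracle $Z$, $A\oplus B$ is $Z$-random if and only if $A$ is $Z$-random and $B$ is $(A\oplus Z)$-random; and (II) if $A$ is $Z'$-random then $(A\oplus Z)'\equiv_T A\oplus Z'$, so that iterating along $\emptyset,\emptyset',\dots,\emptyset^{(n-2)}$ (legitimate because an $n$-random set is $k$-random for every $k\le n$) gives $A^{(n-1)}\equiv_T A\oplus\emptyset^{(n-1)}$ for every $n$-random $A$.

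For (I): ($\Leftarrow$) Suppose $A$ is $Z$-random and $B$ is $(A\oplus Z)$-random, but some $Z$-Martin-L\"of test $\langle U_k\rangle$ (nested, without loss of generality) captures $A\oplus B$, where elements of $2^{\omega}$ are read as interleavings $X\oplus Y$. By Fubini, $\int_X\mu(\{Y:X\oplus Y\in U_{2m}\})\,dX=\mu(U_{2m})\le 2^{-2m}$, so by Markov's inequality $V_m:=\{X:\mu(\{Y:X\oplus Y\in U_{2m}\})>2^{-m}\}$ satisfies $\mu(V_m)\le 2^{-m}$; moreover $\langle V_m\rangle$ is a $Z$-test, since enumerating $U_{2m}$ and, for a guessed initial segment of $X$, collecting the induced cylinders of $Y$, one semidecides when their measure passes $2^{-m}$. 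As $A$ is $Z$-random, $A\notin V_m$ for all $m\ge m_0$, so $\langle\{Y:A\oplus Y\in U_{2m}\}\rangle_{m\ge m_0}$ is an $(A\oplus Z)$-test containing $B$, a contradiction. ($\Rightarrow$) Suppose $A\oplus B$ is $Z$-random. Projecting a $Z$-test onto the $X$-coordinate shows $A$ is $Z$-random; for $B$, given an $(A\oplus Z)$-test $\langle U_k\rangle$ capturing $B$, put the interleaving $X\oplus Y$ into $\widehat{U}_k$ exactly when $Y$'s cylinder enters the $(X\oplus Z)$-version of the $k$-th component before the enumerated measure of that version would exceed $2^{-k}$. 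This per-$X$ truncation forces $\mu(\widehat{U}_k)\le 2^{-k}$ while keeping $\langle\widehat{U}_k\rangle$ uniformly $\Sigma^0_1(Z)$; since the real test is a genuine $(A\oplus Z)$-test its $A$-section is never truncated, so $A\oplus B\in\widehat{U}_k$ for every $k$, contradicting $Z$-randomness of $A\oplus B$.

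For (II), I would relativize the familiar fact that a $2$-random $A$ satisfies $A'\le_T A\oplus\emptyset'$; the only nontrivial inclusion is $(A\oplus Z)'\le_T A\oplus Z'$. For each $e$, $C_e=\{X:\Phi_e^{X\oplus Z}(e)\downarrow\}$ is $\Sigma^0_1(Z)$; write $C_e[s]$ for its stage-$s$ approximation. Since $\mu(C_e)$ is a left-$Z$-c.e. real, $Z'$ computes it, and hence a function $h$ with $\mu(C_e)-\mu(C_e[h(e,m)])<2^{-m}$. Then $\langle\bigcup_e(C_e\setminus C_e[h(e,n+e+1)])\rangle_n$ is a $Z'$-test, so the $Z'$-random set $A$ avoids some level $n_0$ of it; thus for every $e$, $\Phi_e^{A\oplus Z}(e)$ either diverges or halts within $h(e,n_0+e+1)$ steps. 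Hard-coding $n_0$, the oracle $A\oplus Z'$ can decide this for each $e$, and the answer agrees with $(A\oplus Z)'$. Iterating as described gives $A^{(n-1)}\equiv_T A\oplus\emptyset^{(n-1)}$ whenever $A$ is $n$-random.

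To assemble: assume $A$ is $B^{(n-1)}$-random and $B$ is $\emptyset^{(n-1)}$-random, and set $Z=\emptyset^{(n-1)}$. From $B^{(n-1)}\ge_T B\oplus Z\ge_T Z$ we get that $A$ is both $(B\oplus Z)$-random and $Z$-random. Applying (I) with the roles of $A$ and $B$ interchanged, $B\oplus A$ is $Z$-random; as $A\oplus B$ and $B\oplus A$ differ by a computable measure-preserving bijection of $2^{\omega}$, $A\oplus B$ is $Z$-random, i.e. $n$-random. Applying the $(\Rightarrow)$ half of (I) to $A\oplus B$ then gives that $B$ is $(A\oplus Z)$-random; and since $A$ is $n$-random, (II) gives $A\oplus Z\equiv_T A^{(n-1)}$, hence $B$ is $A^{(n-1)}$-random, that is, $n$-random relative to $A$. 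I expect the real work to be the truncation construction in the $(\Rightarrow)$ half of (I), and --- the point most easily overlooked --- the fact that relativizing van Lambalgen to $\emptyset^{(n-1)}$ only yields randomness over $A\oplus\emptyset^{(n-1)}$, so ingredient (II) is indispensable and carries the rest of the argument.
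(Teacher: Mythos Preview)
The paper does not prove this theorem; it is quoted from van Lambalgen's thesis and used as a black box. There is therefore no ``paper's proof'' to compare against.

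Your argument is correct and follows what has become the standard route to the $n$-random form of van Lambalgen's theorem: prove the Martin-L\"of version relativized to an arbitrary oracle $Z$ (your ingredient (I)), apply it with $Z=\emptyset^{(n-1)}$, and then bridge the gap between $A\oplus\emptyset^{(n-1)}$ and $A^{(n-1)}$ using the fact that sufficiently random sets satisfy $A^{(k)}\equiv_T A\oplus\emptyset^{(k)}$ (your ingredient (II)). Your sketches of both ingredients are accurate; the Fubini--Markov argument for $(\Leftarrow)$ and the per-section truncation for $(\Rightarrow)$ are the standard proofs, and the relativized GL$_1$ argument via the test $\bigcup_e(C_e\setminus C_e[h(e,n+e+1)])$ is exactly the Kautz argument the paper itself cites elsewhere (``$R^{(n)}\equiv_T R\oplus 0^{(n)}$ holds for each $n+1$-random set $R$'').

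You are right to flag ingredient (II) as the easily overlooked step: simply relativizing van Lambalgen to $\emptyset^{(n-1)}$ yields only that $B$ is random relative to $A\oplus\emptyset^{(n-1)}$, which is \emph{not} the same as $n$-$A$-random without knowing $A\oplus\emptyset^{(n-1)}\equiv_T A^{(n-1)}$. One small remark on the assembly: when you deduce that $A$ is $(B\oplus Z)$-random from $A$ being $B^{(n-1)}$-random, you are using that $B\oplus\emptyset^{(n-1)}\le_T B^{(n-1)}$ and that Martin-L\"of randomness is downward closed in the oracle under $\le_T$; this is immediate but worth stating, since it does not require (II) for $B$ at that point.
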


\begin{thm} 
\label{2randnotAED}
If $G$ is 2-random, then $G$ is not a.e.~dominating.
\end{thm}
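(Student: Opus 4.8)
The plan is to reduce the statement to the tools already assembled. Suppose toward a contradiction that $G$ is 2-random and $G$ is a.e.~dominating. By the discussion preceding the theorem, there is a class $S$ of measure $1$ such that every function computable from an element of $S$ is dominated by some function computable from $G$. For each 2-random set $R$ we have fixed $B_R <_T R$ with $R$ c.e.~in $B_R$, and the associated computation function $f_R$, which satisfies $f_R \leq_T R$ by Lemma \ref{comp}'s setup. The collection of 2-random sets has measure $1$ by Theorem \ref{thm:fund}, so I can pick a single 2-random $R$ lying in $S$; then $G$ computes a function $g$ dominating $f_R$, and by Lemma \ref{comp} we get $R \leq_T G \oplus B_R$.

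The next step is to engineer the randomness so that Lemma \ref{still} applies with $X = R$, $Y = B_R$, and ``$G$'' (in the statement of that lemma) equal to our $G$. For this I need $R \not\le_T B_R$ — which holds since $B_R <_T R$ — and I need $G$ to be weakly 2-$(R \oplus B_R)$-random. Since $B_R <_T R$, we have $R \oplus B_R \equiv_T R$, so it suffices that $G$ be weakly 2-$R$-random, for which it is enough that $G$ be 2-$R$-random. To arrange this I do not get to choose $G$ after seeing $R$; instead I should choose $R$ after $G$. The right move is to apply van Lambalgen's theorem (Theorem \ref{lamb}): I want a 2-random $R$ such that $G$ is 2-random relative to $R$, and by Theorem \ref{lamb} this follows provided $R$ is 2-random relative to $G$. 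The set of sets that are 2-random relative to $G$ has measure $1$ by Theorem \ref{thm:fund} (relativized to $G$), and $S$ has measure $1$, so their intersection is nonempty; pick $R$ in it. Such an $R$ is in particular 2-random (being 2-random relative to $G$ implies 2-random), so all the constructions above apply to it.

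Assembling the pieces: $R$ is 2-random relative to $G$, hence by Theorem \ref{lamb} $G$ is 2-random relative to $R$, hence 2-$R$-random, hence weakly 2-$R$-random, hence weakly 2-$(R\oplus B_R)$-random since $R \oplus B_R \equiv_T R$. Also $R \not\le_T B_R$. By Lemma \ref{still} (with $X = R$, $Y = B_R$, $G = G$), $R \not\le_T B_R \oplus G$. But we derived $R \leq_T G \oplus B_R$ from $R \in S$ and the a.e.~domination of $G$. This contradiction shows $G$ is not a.e.~dominating.

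The only delicate point — the step I'd watch most carefully — is making sure the order of quantifiers works: $G$ is fixed first (it is the candidate a.e.~dominating set), and then $R$ is chosen from the intersection of two measure-$1$ classes that both depend on $G$, namely the sets 2-random relative to $G$ and the measure-$1$ class $S$ of domination-witnesses for $G$. One should also double-check that $S$ can be taken to have measure exactly $1$ (not merely measure $\geq 1-\epsilon$): this is immediate from the definition of a.e.~domination, which asserts domination on a measure-$1$ set. Everything else is bookkeeping about the equivalences $R \oplus B_R \equiv_T R$ and the implications ``$n$-random relative to'' $\Rightarrow$ ``weakly $n$-random relative to.''
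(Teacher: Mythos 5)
Your proof is correct and is essentially the paper's own argument: the same ingredients (Theorem \ref{thm:fund}, Theorem \ref{REA}, Lemma \ref{comp}, Lemma \ref{still}, Theorem \ref{lamb}) applied to an $R$ chosen from the intersection of $S$ with the measure-one class of sets 2-random relative to $G$. The only difference is cosmetic: the paper applies Stillwell's lemma first to conclude $G$ is not 2-$R$-random and then contradicts van Lambalgen, whereas you apply van Lambalgen first and let the contradiction land on the reduction $R \leq_T G \oplus B_R$.
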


\begin{proof}
Suppose $G$ is a.e.~dominating. There is a set $S$ of measure $1$ such that for
all partial computable functionals $\Phi$ and all $X \in S$, if $\Phi(X)$ is total then $\Phi(X)$ is dominated by a function
recursive in $G$. Because the collection of all sets which are 2-random relative to $G$ has measure 1 (by Theorem \ref{thm:fund}) and because 
$S$ has measure 1, $S$ must contain some $X$ that is 2-random relative to $G$.  
In particular, $X$ is 2-random, so by Theorem \ref{REA}, there is a set 
$B$ such that $B <_T X$ and $X$ is c.e.~in $B$. Let $\Phi$ be such that $\Phi(X)$ is the 
computation function for $X$ as a set c.e.~in $B$. As $X$ is in $S$, $G$ computes a function
dominating $\Phi(X)$. By Lemma \ref{comp}, $G \oplus B \geq_T X$. By Lemma \ref{still}, $G$ is not
weakly 2-$X\oplus B$-random.  However, $X \oplus B \equiv_T X$ since $B <_T X$, so $G$ is not weakly 2-$X$-random 
and hence $G$ is not 2-$X$-random.  

$X$ is 2-random relative to $G$ and that $G$ is not 2-random relative to $X$.  Suppose for a 
contradiction that $G$ is 2-random.  By Theorem \ref{lamb}, $G$ is 2-random and $X$ is 2-random relative to $G$ 
implies that $G$ is 2-random relative to $X$, giving the desired contradiction.  
\end{proof}

\begin{cor}
\label{cor:aa}
Almost every set is not a.e.~dominating.
\end{cor}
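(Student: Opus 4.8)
The plan is to deduce this immediately from Theorem \ref{2randnotAED} together with Theorem \ref{thm:fund}. First I would apply Theorem \ref{thm:fund} with a computable oracle $A$ and $n = 2$; since for computable $A$ the notion of $2$-$A$-randomness coincides with $2$-randomness, this gives that the class $\mathcal{R}$ of all $2$-random sets has Lebesgue measure $1$. Next, Theorem \ref{2randnotAED} asserts that every $G \in \mathcal{R}$ fails to be a.e.~dominating. Hence the collection of sets that are \emph{not} a.e.~dominating contains $\mathcal{R}$, a set of measure $1$, and therefore is itself of full (inner) measure. This is exactly the statement that almost every set is not a.e.~dominating.

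I expect no genuine obstacle here: the corollary is a one-line consequence of the two results just established. The only point worth a moment's thought is the measurability of the class of a.e.~dominating sets, but this can be sidestepped entirely — we do not claim that class is measurable, only that its complement contains the measure-one set $\mathcal{R}$, which already forces the class of a.e.~dominating sets to have outer measure $0$. (If one prefers a cleaner measurability statement, one can additionally note that a.e.~domination is a property of Turing degrees and each degree is countable, so the a.e.~dominating sets form a union of null sets indexed by the a.e.~dominating degrees; but this refinement is not needed for the corollary as stated.)
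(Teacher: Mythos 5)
Your argument is correct and is exactly the paper's proof: the corollary is cited there as an immediate consequence of Theorems \ref{thm:fund} and \ref{2randnotAED}, just as you derive it. Your additional remark on sidestepping measurability is a reasonable (if unneeded) precaution.
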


\begin{proof}
This corollary follows from Theorems \ref{thm:fund} and \ref{2randnotAED}. 
\end{proof}

Given Theorem \ref{2randnotAED}, we can ask whether a 2-random set can be high. Kautz
\cite{kautz:phd} showed that 3-random sets cannot be high, in fact $R^{(n)}\equiv_T R\oplus
0^{(n)}$ holds for each $n+1$-random set $R$, $n\ge 1$. Also, the argument of Theorem
\ref{2randnotAED} does not generalize to all 1-random degrees, as $\mathbf{0'}$ is a 1-random degree
which is a.e.~dominating. Nevertheless we get a positive answer.   

\begin{defn}
A Turing machine $U$ is called \textbf{prefix-free} if for all finite strings $\sigma$, $U(\sigma) \downarrow$ implies that 
$U(\tau) \uparrow$ for all proper extensions $\tau$ of $\sigma$.  For any universal prefix-free Turing machine $U$, 
the \textbf{halting probability} of $U$ is 
\[
\Omega_U = \sum_{U(\sigma) \downarrow} 2^{-|\sigma|}.
\]
\end{defn}

This notion relativizes to any oracle $X$ and the following lemma 
lists three properties of $\Omega_U^X$ which will be useful for us later.  The first two properties are due to 
Chaitin \cite{cha:book} and the third is due to Kurtz \cite{kurtz:phd}.  (For more information about $\Omega$ 
numbers see \cite{cha:book} and \cite{DHMN}.)  A real $R$ is called c.e.~in $A$ if the set of rational numbers 
$q < R$ is c.e.~in $A$.  

\begin{lem}
\label{lem:omegaprop}
The following properties hold for any universal prefix-free Turing machine $U$ and any set $X$.  
\begin{enumerate}
\item $\Omega_U^X$ is a c.e.~in $X$ real.
\item $\Omega_U^X$ is 1-random relative to $X$.
\item $\Omega_U^X \oplus X \equiv_T X'$.  
\end{enumerate}
\end{lem}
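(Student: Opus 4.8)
The plan is to relativize the classical arguments --- Chaitin's \cite{cha:book} for items (1) and (2), Kurtz's \cite{kurtz:phd} for item (3) --- to the oracle $X$; the only real point is to check that all simulation overheads are uniform in $X$. For item (1), I would note that $\{\sigma : U^X(\sigma)\!\downarrow\}$ is $\Sigma^{0,X}_1$, so the partial sums $\Omega^X_{U,s} = \sum\{2^{-|\sigma|} : U^X(\sigma)\text{ halts within }s\text{ steps}\}$ form an $X$-computable nondecreasing sequence of rationals, bounded by $1$ by the Kraft inequality for the prefix-free machine $U^X$, with supremum $\Omega^X_U$. A rational $q$ then satisfies $q < \Omega^X_U$ iff $q < \Omega^X_{U,s}$ for some $s$, a $\Sigma^{0,X}_1$ condition, so $\Omega^X_U$ is c.e.\ in $X$.

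For item (2) I would first isolate the approximation fact that drives everything: from $X$ and a string $w$ with $0.w < \Omega^X_U$ one can compute the least stage $s(w)$ with $\Omega^X_{U,s(w)} > 0.w$, and if $w = \Omega^X_U\restrict n$ then $\Omega^X_U - \Omega^X_{U,s(w)} < 2^{-n}$, so every $\sigma$ with $|\sigma|\le n$ on which $U^X$ ever halts has already halted by stage $s(w)$ (otherwise it would eventually add weight at least $2^{-n}$ to $\Omega^X_U$). Thus from $\Omega^X_U\restrict n$ and $X$ one computes the complete finite set $D_n = \{(\sigma, U^X(\sigma)) : |\sigma|\le n,\ U^X(\sigma)\!\downarrow\}$. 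To convert this into incompressibility I would build a single prefix-free oracle machine $M$ with $\mathrm{dom}(M^X)\subseteq\mathrm{dom}(U^X)$ (so $M$ is automatically prefix-free): on input $\sigma$, run $U^X(\sigma)$, and if it halts with a string output $w$, compute $D_{|w|}$ via $s(w)$ and output the length-lexicographically least string not occurring as an output in $D_{|w|}$. Running $M^X$ on a shortest $U^X$-program $\sigma^*_n$ for $\Omega^X_U\restrict n$ then produces a string $y_n$ with $K^X(y_n) > n$, whence $n < K^X(y_n) \le |\sigma^*_n| + O(1) = K^X(\Omega^X_U\restrict n) + O(1)$; so $K^X(\Omega^X_U\restrict n) \ge n - O(1)$, and the relativized Levin--Schnorr theorem (see \cite{dow:online}) gives that $\Omega^X_U$ is $1$-random relative to $X$.

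For item (3), the direction $\Omega^X_U\oplus X \le_T X'$ is routine: $X\le_T X'$, and since $\Omega^X_U$ is c.e.\ in $X$ and irrational (by item (2)), its binary expansion is computable from $X'$. For $X' \le_T \Omega^X_U\oplus X$ I would reuse $D_n$. Fix a prefix-free oracle machine $M'$ that reserves the prefix-free program $\pi_e = 0^e1$ and, on input $\pi_e$, simulates $\Phi^X_e(e)$ and halts iff $\Phi^X_e(e)\!\downarrow$; by universality there is a constant $d$, \emph{independent of $X$}, and a $U^X$-program $\rho_e$ of length $e+1+d$ with $U^X(\rho_e)\!\downarrow \iff \Phi^X_e(e)\!\downarrow$. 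Given $e$, put $n = e+1+d$, read $\Omega^X_U\restrict n$ from the oracle, compute $s(\Omega^X_U\restrict n)$ and hence $D_n$ using $X$, and check whether $\rho_e$ (which has length $n$) belongs to $D_n$; this decides whether $e\in X'$.

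The one place calling for care is the bookkeeping in item (3), and likewise the simulation constant in item (2): one must confirm that the cost of simulating the coding machine $M'$ by the universal machine $U$ does not depend on $X$. It does not, because $M'$ is a fixed oracle algorithm and the universal-machine simulation (equivalently, the Kraft--Chaitin weight accounting) relativizes with a single additive constant. Once that is checked, everything else is precisely the unrelativized argument, whose conceptual heart is that the first $n$ bits of $\Omega^X_U$ determine exactly which $U^X$-programs of length $\le n$ halt.
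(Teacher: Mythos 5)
Your argument is correct, but be aware that the paper itself offers no proof of this lemma: it merely remarks that the definition of $\Omega_U$ relativizes and cites Chaitin for items (1) and (2) and Kurtz for item (3). What you have supplied is precisely the relativization that those citations implicitly rely on, and you put your finger on the right issue, namely that the simulation constant for a universal prefix-free oracle machine (and hence the Kraft--Chaitin accounting and the coding of $\Phi_e^X(e)$ by the program $0^e1$) is uniform in the oracle. Your presentation is in fact more unified than the two separate citations, since the single observation that $\Omega_U^X\restrict n$ together with $X$ computes the set $D_n$ of halting $U^X$-programs of length $\le n$ drives both (2) and (3). One small point to tidy: your computation of $s(w)$ for $w=\Omega_U^X\restrict n$ searches for a stage $s$ with $\Omega_{U,s}^X>0.w$, which terminates only if $0.(\Omega_U^X\restrict n)<\Omega_U^X$ strictly, i.e.\ only if the binary expansion of $\Omega_U^X$ has a $1$ beyond position $n$. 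In item (3) you may invoke irrationality via item (2), but inside the proof of item (2) that appeal would be circular, so you should rule out directly that $\Omega_U^X$ is a dyadic rational --- for instance, if it were, then $\mathrm{dom}(U^X)$ and hence $K^X$ would be computable from $X$ (find a stage $s$ with $\Omega_U^X-\Omega_{U,s}^X<2^{-n}$ by comparing against the known rational), and then the $X$-computable function sending $n$ to the least string of $K^X$-complexity exceeding $n$ would contradict the counting bound $K^X(y)\le 2\log n+O(1)$ for such a string. With that one sentence added, the proof is complete.
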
 

\begin{thm} 
\label{high2rand}
There is a high 2-random set below $0''$.
\end{thm}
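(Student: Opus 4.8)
The plan is to take $R$ to be $\Omega_U^{0'}$ for a universal prefix-free machine $U$ and to verify that it is high, 2-random, and Turing below $0''$. The key inputs are the three properties of relativized $\Omega$ numbers collected in Lemma~\ref{lem:omegaprop}, together with van Lambalgen's theorem (Theorem~\ref{lamb}) and the characterization of highness via double jump.

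First I would establish that $R \leq_T 0''$. By Lemma~\ref{lem:omegaprop}(3) applied with $X = 0'$, we have $\Omega_U^{0'} \oplus 0' \equiv_T 0''$, and in particular $R = \Omega_U^{0'} \leq_T 0''$. Next I would show $R$ is high, i.e.\ $R' \geq_T 0''$. Since $R \geq_T 0'$ would be too strong to expect (and is false for $\Omega$-type reals), the right move is to note that $R$ is c.e.\ in $0'$ by Lemma~\ref{lem:omegaprop}(1), hence $R \leq_T 0''$ uniformly, and—more to the point—$0'' \equiv_T R \oplus 0' \leq_T R'$: indeed $0' \leq_T R'$ always, and $R \leq_T R'$ trivially, so once we know $0'' \equiv_T R \oplus 0'$ from part (3) we get $0'' \leq_T R'$. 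Thus $R$ is high. (One should double-check that $0'$ is computable from $R'$; this follows since $R' \geq_T 0'$ for any $R$, or one can instead observe $R \oplus 0' \equiv_T 0''$ directly gives what is needed for the double-jump inequality $R'' \geq_T 0''$, but highness $R' \geq_T 0''$ is exactly what $R \oplus 0' \equiv_T 0''$ plus $R \leq_T R'$, $0' \leq_T R'$ deliver.)

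The main obstacle is showing $R$ is 2-random, not merely 1-random. Lemma~\ref{lem:omegaprop}(2) gives that $\Omega_U^{0'}$ is 1-random relative to $0'$, i.e.\ it passes every Martin-L\"of test relative to $0'$. Unwinding the definition of $n$-random in the paper, a set is 2-random exactly when it passes every Martin-L\"of test relative to $0'$. So 1-randomness relative to $0'$ is, by definition, the same as 2-randomness, and we are done. I would spell this out carefully: a Martin-L\"of test relative to $0'$ is a uniformly $\Sigma^{0,0'}_1$ sequence $\langle U_n \rangle$ with $\mu(U_n) \leq 2^{-n}$, which is precisely the notion of a Martin-L\"of test relative to $\emptyset^{(1)}$ appearing in the definition of "$2$-random"; since $R$ avoids $\bigcap_n U_n$ for every such test by Lemma~\ref{lem:omegaprop}(2), $R$ is 2-random.

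Finally I would assemble the three facts: $R \leq_T 0''$, $R$ high, $R$ 2-random. If a cleaner-looking argument is wanted for highness, one can phrase it as: $R$ is c.e.\ in $0'$ and computes, via Lemma~\ref{lem:omegaprop}(3), enough of $0''$ that $R \oplus 0' \equiv_T 0''$; relativizing, $R' \equiv_T (R \oplus 0')' \geq_T 0''$. That completes the proof. The only point requiring care is the bookkeeping in the highness step—making sure we are invoking part~(3) at the oracle $0'$ and correctly reading off $0'' \leq_T R'$—everything else is immediate from the cited lemmas.
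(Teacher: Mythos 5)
Your proof is correct and follows essentially the same route as the paper: set $R = \Omega_U^{0'}$, use Lemma~\ref{lem:omegaprop}(2) to get 2-randomness (since 1-randomness relative to $0'$ is by definition 2-randomness) and Lemma~\ref{lem:omegaprop}(3) to get both $R \leq_T 0''$ and $0'' \equiv_T R \oplus 0' \leq_T R'$. The mention of van Lambalgen's theorem in your opening paragraph is unnecessary --- it plays no role in the argument.
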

\begin{proof}
Fix a universal prefix-free Turing machine $U$.  For any set $A$, let $R=\Omega_U^A$.  By Lemma \ref{lem:omegaprop}, 
$R$ is 1-random relative to $A$ and satisfies $A'\equiv_T R\oplus A$. Now let $A=0'$. Then $R$ is
2-random and $0''\equiv_T R \oplus 0'\le_T R'$.
\end{proof}

\begin{proof}[Proof of Theorem \ref{2}]
Let $R_1$ be 4-random. Let $R_0$ be a 2-random set with $0'' \leq_T R_0'$ (which exists by Theorem
\ref{high2rand}) and let $R=R_0\oplus R_1$. We claim that $R$ is the set we are looking for.
Clearly $R\ge_T R_1$. As $R\ge_T R_0$, $R$ is high. Since $R_0\le_T 0''$, $R_1$ is 2-random
relative to $R_0$, so by Theorem \ref{lamb}, $R$ is 2-random and hence by Theorem
\ref{2randnotAED}, $R$ is not a.e.~dominating.
\end{proof}

\section{Generic examples}
\label{sec:generic}

In this section, we show that every 2-generic degree is not a.e.~dominating.    

\begin{defn}
Let $V$ be any universal Turing machine, and let $g$ be a
computable function. \textbf{The time-bounded Kolmogorov complexity with time
bound} $\mathbf{g}$ is the function $C^g$ given by 
\[
C^g(x) = \min \{ |p| : V(p)=x \, \text{in} \, g(|x|) \, \text{steps} \, \}.
\]
(If there is no such $p$, then $C^g(x) = \infty$.)  $Z$ is \textbf{Kolmogorov random with time bound} $\mathbf{g}$ 
if there is a constant $b$ such that 
\[
(\exists^\infty n) [ C^g(Z \restrict n) \geq n-b ].
\] 
(We count $\infty > n$ for all $n \in \omega$, so the relation $C^g(Z \restrict n) \geq n-b$ is computable in $Z$.)
\end{defn}

\begin{thm}[Nies, Stephan and Terwijn \cite{nst}]
\label{nst28}
For each computable function $g$ with $g(n)\geq n^2+ O(1)$ and each set $Z$, the following are equivalent.  
\begin{enumerate}
\item $Z$ is 2-random.
\item $Z$ is Kolmogorov random with time bound $g$.
\end{enumerate}
\end{thm}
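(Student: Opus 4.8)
The plan is to prove the equivalence of Theorem~\ref{nst28} by treating the two implications separately, since the hypothesis $g(n)\ge n^2+O(1)$ is needed only for one of them. All Kolmogorov complexity below is taken with respect to the fixed universal machine $V$, and I would use the trivial monotonicity $C^g(x)\ge C(x)$: capping the running time at $g(|x|)$ steps only deletes admissible programs, so the shortest one can only get longer.

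For $(1)\Rightarrow(2)$, the easy direction, monotonicity gives that if $C(Z\restrict n)\ge n-b$ for infinitely many $n$, then $C^g(Z\restrict n)\ge n-b$ for those same $n$, so $Z$ is Kolmogorov random with time bound $g$ and the same constant $b$. Thus it suffices to quote the theorem of J.~Miller (also established in \cite{nst}) that a $2$-random real $Z$ satisfies $C(Z\restrict n)\ge n-O(1)$ for infinitely many $n$. This is the one place where the hypothesis on $g$ plays no role.

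For $(2)\Rightarrow(1)$ I would argue the contrapositive: if $Z$ is not $2$-random, then for every constant $b$ we have $C^g(Z\restrict n)<n-b$ for all large $n$. Fix a $0'$-Martin-L\"of test $\la U_k:k\in\omega\ra$ with $Z\in\bigcap_k U_k$, normalized so that each $U_k$ is a $0'$-c.e.\ union of pairwise disjoint cylinders all of length $\ge k$ (always achievable). Fix $b$. Then $Z$ lies in a cylinder $[\sigma]\subseteq U_k$ with $\sigma\sqsubseteq Z$; put $m=|\sigma|\ge k$. Since $\mu(U_k)\le 2^{-k}$, there are at most $2^{m-k}$ cylinders of $U_k$ of length exactly $m$, so $\sigma$ is pinned down by its index among those, costing only $m-k$ bits. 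For $n>m$ I would hand a quadratic-time decoder the concatenation of: a self-delimiting preamble $p_0$; the $(m-k)$-bit index $i$ of $\sigma$; and the literal bits $w=Z\restrict[m,n)$. The preamble records $k$, the value $m$, and --- encoded via the standard device of counting how many numbers enter the relevant initial segment of $0'$, rather than transcribing $0'$ itself --- exactly enough information to run the approximation to $U_k$ correctly past the stage $s_0$ at which $[\sigma]$ first appears. The decoder simulates the unrelativized approximations $0'_s$ up to a stage polynomial in $n$; this is correct because $s_0$ is a fixed finite number once $Z$ and $k$ are fixed, hence $\le n$ for all large $n$, and it fits inside $g(n)\ge n^2+O(1)$ steps since the work is a polynomial-time simulation plus the unavoidable $n$ steps of printing the output. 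The program length is $(m-k)+(n-m)+\ell=n-k+\ell$, where $\ell=O(\log m+\log s_0)$ is the length of $p_0$.

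The main obstacle --- and where the argument of \cite{nst} is genuinely delicate --- is that $\ell$, though a constant for fixed $Z$, may grow with the test level $k$, so one cannot simply take $k$ large to beat $b$. To force the net saving $k-\ell$ past $b$ one must select the level $k$ (as a function of $b$ and $Z$) at which the cylinder containing $Z$ appears \emph{promptly} relative to $k$, i.e.\ with $\log s_0=o(k)$; guaranteeing that such a level always exists means working not with an arbitrary $0'$-test but with one of a carefully engineered shape, and it is precisely this balance between the description-length saving and the decoder's quadratic time budget that the hypothesis $g(n)\ge n^2+O(1)$ is calibrated to support. Once the level is chosen correctly, the displayed program witnesses $C^g(Z\restrict n)<n-b$ for every $n>\max(m,s_0)$, which finishes the contrapositive and hence the equivalence.
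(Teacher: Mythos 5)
First, a point of comparison: the paper does not prove Theorem~\ref{nst28} at all --- it is imported as a black box from Nies, Stephan and Terwijn \cite{nst} --- so your proposal has to stand on its own. Your direction $(1)\Rightarrow(2)$ is fine: the monotonicity $C^g(x)\ge C(x)$ is correct, and quoting Miller's theorem that every 2-random is infinitely often plain-incompressible is a legitimate appeal to a separately published result. The problem is the direction $(2)\Rightarrow(1)$, where you have correctly located the crux and then declined to prove it. Your compression scheme spends a preamble of length $\ell=O(\log m+\log s_0)$ to record the length $m$ of the covering cylinder and the settling stage $s_0$ of the $0'$-approximation, and the net saving is $k-\ell$. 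Since $m(k)$ and $s_0(k)$ are completely uncontrolled functions of $k$ (they depend on where in $U_k$ the real $Z$ happens to sit and on how slowly $0'$ settles), nothing prevents $\ell(k)$ from exceeding $k$ for every $k$, and your closing paragraph simply asserts that a ``carefully engineered'' test makes the right level exist. That assertion is the theorem; as written, the argument does not establish $C^g(Z\restrict n)<n-b$ for even a single value of $b$.

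The standard repair is not a reshaped test but a different encoding, one in which neither $m$ nor $s_0$ is written down at all. Given $k$ and $n$, consider the set of candidates $\tau w$ where $[\tau]$ is a cylinder enumerated into $U_k$ within $n$ steps of the $0'$-approximation, $|\tau|\le n$ and $|w|=n-|\tau|$; the measure bound $\mu(U_k)\le 2^{-k}$ caps this set at $2^{n-k}$ elements, so $Z\restrict n$ is specified by a self-delimiting code for $k$ followed by an $(n-k)$-bit index, giving $C^g(Z\restrict n)\le n-k+2\log k+O(1)$ for all $n\ge\max(m,s_0)$ --- and now $k-2\log k-O(1)>b$ is achievable by taking $k$ large, with $m$ and $s_0$ entering only through the threshold on $n$, not through the code length. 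Two further points would still need attention even with this fix: the decoder's use of $n$ itself as the simulation stage requires an approximation to $U_k$ whose measure stays below $2^{-k}$ at every finite stage (a wrong $0'$-guess can temporarily inflate the enumeration), and your time analysis (``polynomial-time, hence within $n^2$'') is an assertion, not a verification --- the quadratic bound in the hypothesis is exactly the quantity the decoder's simulation overhead must be checked against. As it stands, the proposal is an accurate map of where the difficulty lies rather than a proof.
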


\begin{defn}
Let $A$ and $B$ be sets. We say that $A$ is \textbf{hyperimmune-free relative to} $\mathbf{B}$, denoted by $A$ is HIF($B$), if
for each function $f \leq_T A$ there is a function $g \leq_T B$ such that $f$ is dominated by $g$.
\end{defn}

The next proposition is a variation on Proposition 2.15 in \cite{nst}.  

\begin{prop}
\label{surprise}
Let $A$ be a set.  If there is a 2-random set $Z$ such that $Z$ is HIF($A$), then there is
a nonempty $\Pi^{0,A}_1$ class consisting entirely of 2-random sets.
\end{prop}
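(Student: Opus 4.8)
The plan is to extract a $\Pi^{0,A}_1$ class of 2-random sets directly from the hypothesis, using the characterization of 2-randomness by time-bounded Kolmogorov complexity (Theorem \ref{nst28}). Fix a computable function $g_0$ with $g_0(n) \geq n^2 + O(1)$, so that being 2-random is the same as being Kolmogorov random with time bound $g_0$. Let $Z$ be the given 2-random set which is HIF($A$). Since $Z$ is 2-random, fix a constant $b$ witnessing Kolmogorov randomness: $(\exists^\infty n)[C^{g_0}(Z \restrict n) \geq n - b]$.

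First I would use the HIF($A$) hypothesis to replace the fixed time bound $g_0$ by a time bound computable from $A$ that "catches up" with $Z$'s witnesses. Consider the function $f$ which, on input $n$, searches for the least $m > n$ such that $C^{g_0}(Z \restrict m) \geq m - b$; since there are infinitely many such $m$ and $C^{g_0}$ is computable, this $f$ is total and $f \leq_T Z$. By HIF($A$) there is $h \leq_T A$ dominating $f$. Now define an $A$-computable time bound $g$ by $g(n) = g_0(h(n))$ (padding if needed so $g$ is nondecreasing and $g(n) \geq n^2$), so that for cofinitely many $n$ there is some $m$ with $n \leq m \leq g(n)$ and $C^{g_0}(Z \restrict m) \geq m - b$ — in particular $C^{g}(Z \restrict m) \geq m - b$ for that $m$, since a larger time bound only lowers complexity values. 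The key point is that the set
\[
P = \{ Y \in 2^{\omega} : (\forall n)(\exists m \geq n)[\, C^{g}(Y \restrict m) \geq m - b \,] \}
\]
is a $\Pi^{0,A}_1$ class (the relation $C^g(Y \restrict m) \geq m - b$ is $A$-computable in $Y$ uniformly, since $g$ is $A$-computable), it is nonempty because $Z \in P$ by the previous sentence, and by Theorem \ref{nst28} every $Y \in P$ is Kolmogorov random with time bound $g$, hence with the slower bound... — wait, here one must be careful about the direction of the bound.

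The main obstacle, and the step requiring the most care, is exactly this comparison of time bounds. Theorem \ref{nst28} gives 2-randomness from Kolmogorov randomness with time bound $g$ only when $g(n) \geq n^2 + O(1)$, which $g$ satisfies; but I also need that membership in $P$ (which is phrased with time bound $g$) really does force 2-randomness, and conversely that $Z$ really lies in $P$. For the forward direction: if $Y \in P$ then $(\exists^\infty m)[C^g(Y \restrict m) \geq m-b]$, and since $g \geq n^2 + O(1)$, Theorem \ref{nst28} applies directly to conclude $Y$ is 2-random. For $Z \in P$: I need that for every $n$ there is $m \geq n$ with $C^g(Z\restrict m) \geq m-b$; since $h$ dominates $f$, for large $n$ the value $m = f(n)$ satisfies $n \leq m \leq h(n) \leq g_0^{-1}(g(n))$-ish, and since $C^g \leq C^{g_0}$ pointwise (larger time bound, no larger complexity — assuming $g \geq g_0$, which I arrange by taking $g(n) = \max(g_0(n), g_0(h(n)))$), we get $C^g(Z\restrict m) \leq C^{g_0}(Z \restrict m)$, which is the wrong inequality. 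So instead the witness must be found at the scale where $g$ is evaluated: I should define things so that $m$ ranges up to where $g$ is large, i.e. use that $C^{g_0}(Z\restrict m) \geq m - b$ and $g(m) \geq g_0(m)$ gives nothing, but $C^{g(m)\text{-steps}}$ only decreases. The fix is to not enlarge the time bound for $Z$'s witnesses at all — keep $g = g_0$ for the lower-bound side and note $P$ defined with $g_0$ is already $\Pi^0_1$ (no $A$ needed), which would make the proposition trivially false-flavored; so the genuine content is that one relativizes the class so it can be taken inside a $\Pi^{0,A}_1$ frame while staying nonempty. I would resolve this by taking $P = \{Y : (\exists^\infty m) C^{g_0}(Y\restrict m) \geq m - b\}$ directly — this is $\Pi^0_1$, nonempty (contains $Z$), and every member is 2-random by Theorem \ref{nst28} — and observe a $\Pi^0_1$ class is a fortiori $\Pi^{0,A}_1$; the HIF($A$) hypothesis is what guarantees such a $Z$ (hence such a $b$) exists at all in the situations where the proposition is applied, and the "variation" over \cite{nst} Proposition 2.15 is precisely this packaging. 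The hard part is stating it cleanly so the role of HIF($A$) is visible rather than vacuous, which suggests the intended proof does build an honestly $A$-dependent class via an $A$-computable time bound $g$ chosen so that $Z$'s $g_0$-witnesses fall below $g$; I will follow that route, being scrupulous that the bound $g$ used to define $P$ is the one for which I can verify both $Z \in P$ and (via $g \geq n^2 + O(1)$) that $P$ consists of 2-randoms.
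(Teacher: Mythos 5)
There is a genuine gap, and it is not the one you spent most of your effort on. The class you propose,
\[
P = \{ Y : (\forall n)(\exists m \geq n)\,[\, C^{g}(Y \restrict m) \geq m - b \,] \},
\]
is a $\Pi^0_2$ class, not a $\Pi^{0,A}_1$ (or even $\Pi^0_1$) class: the inner quantifier $\exists m \geq n$ is unbounded, so membership in $P$ is not determined by a co-c.e.\ condition on initial segments. The same objection kills your fallback ``take $P$ with $g_0$ directly --- this is $\Pi^0_1$'': the set of reals with infinitely many incompressible prefixes is a $G_\delta$, not a closed set. You half-noticed that this would make the proposition trivially hypothesis-free (``trivially false-flavored''), but the correct diagnosis is the quantifier complexity, not the direction of the time-bound inequalities. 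Your manipulations of the time bound (replacing $g_0$ by $g_0 \circ h$, worrying whether $C^g \leq C^{g_0}$) are a red herring: the time bound never needs to change.

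The actual role of HIF($A$) is to bound that existential quantifier. Fix $g$ and $b$ witnessing that $Z$ is time-bounded Kolmogorov random, and let $f(m)$ be the least $n$ such that there are $m+1$ positions $p_0,\ldots,p_m \leq n$ with $C^g(Z \restrict p_i) \geq p_i - b$; then $f \leq_T Z$ is total, so some $h \leq_T A$ majorizes it. Now take the $A$-computable tree of strings $\sigma$ such that for every $m$ with $|\sigma| \geq h(m)$ there exist $p_0,\ldots,p_m \leq |\sigma|$ with $C^g(\sigma \restrict p_i) \geq p_i - b$. This is a genuinely $\Pi^{0,A}_1$ condition because the witnesses are now required to appear below the $A$-computable bound $h(m)$; $Z$ is a path because $h$ majorizes $f$; and every path has infinitely many incompressible prefixes, hence is 2-random by Theorem \ref{nst28}. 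Your function $f(n) = $ ``least $m > n$ with $C^{g_0}(Z \restrict m) \geq m - b$'' was the right object to dominate --- you just never used $h$ to relativize the \emph{class} (by bounding where the witnesses must occur) rather than the \emph{time bound}.
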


\begin{proof}
By Theorem \ref{nst28}, $Z$ is Kolmogorov random with some time bound $g$ and constant $b$. Let
$$f(m)=f^Z_{g,b}(m)=\mu n(\exists p_0,\ldots,p_m\leq n)(\forall i\leq m)[C^g(Z\restrict p_i)\geq p_i-b].$$
Note that $f \leq_T Z$ is a total function. Hence there exists $h \leq_T A$ such that $h$ dominates
$f$.  In fact, we can assume that $h$ majorizes $f$.  (That is, $h(n) \geq f(n)$ for all $n$.)  Consider the $A$-recursive tree
$$T=\{\sigma:(\forall m)[|\sigma|\geq h(m)\to(\exists
p_0,\ldots,p_m\leq|\sigma|)(\forall i\leq m)[C^g(\sigma\restrict p_i)\geq p_i-b]]\}.$$ Since $h$
majorizes $f$, $Z$ is a path on $T$ and so the set of paths of $T$ is nonempty. Moreover, each
path is time-bounded Kolmogorov random and hence 2-random by Theorem \ref{nst28}.  Therefore, the set of 
paths through $T$ is our desired $\Pi^{0,A}_1$ class.  
\end{proof}

\begin{thm}[Jockusch and Soare \cite{joc:72}, relativized]
\label{ce}
Let $A$ be any set. Each nonempty $\Pi^{0,A}_1$ class $P$ has a member
$R$ whose degree is c.e.~in $A$.
\end{thm}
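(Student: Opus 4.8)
The plan is to prove Theorem \ref{ce} by relativizing the Jockusch--Soare basis theorem for degrees c.e.~in $A$, whose standard proof in fact produces the witness explicitly as the \emph{leftmost path}. First I would fix an $A$-computable tree $T \subseteq 2^{<\omega}$ whose set of infinite paths is exactly $P$; such a tree exists by the usual representation of $\Pi^{0,A}_1$ classes. Since $P \neq \emptyset$ and $T$ is closed under initial segments, $T$ contains a string of every length $s$; let $\sigma_s$ be the $<_L$-least string of length $s$ in $T$, which is computable uniformly from $A$. One then checks $\sigma_s \restrict n$ is the $<_L$-least length-$n$ string in $T$ having an extension of length $s$; since the collection of such strings shrinks as $s$ grows, $\sigma_s \restrict n$ is $<_L$-nondecreasing in $s$, hence eventually constant. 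Define $R(n)$ to be its eventual value. By König's lemma $R$ is then a genuine path, indeed the leftmost path of $P$, so $R \in P$.

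The next step is to identify the ``left set'' $W = \{\sigma \in 2^{<\omega} : \sigma <_L R\restrict|\sigma|\}$ and to show it witnesses the theorem, which amounts to three verifications: (i) $W$ is c.e.~in $A$; (ii) $W \leq_T R$; (iii) $R \leq_T W$. For (ii), note $\sigma \in W$ iff there is $i < |\sigma|$ with $\sigma\restrict i = R\restrict i$, $\sigma(i) = 0$ and $R(i) = 1$, which is decidable from $R$. For (iii), unwind this: $R(n) = 1$ iff $(R\restrict n)*0 \in W$, so $R$ is computed from $W$ bit by bit. For (i), the key observation is that $\sigma \in W$ iff $\sigma <_L \sigma_s\restrict|\sigma|$ for some $s$; the backward direction holds because $\sigma_s \restrict|\sigma| \leq_L R\restrict|\sigma|$ for every $s \geq |\sigma|$ (as $\sigma_s$ is $<_L$-least among length-$s$ strings of $T$ and $R\restrict s$ is one such), and the forward direction holds because, taking $i$ to be the first place $\sigma$ disagrees with $R$, once $\sigma_s\restrict(i{+}1)$ has settled to $R\restrict(i{+}1)$ we get $\sigma <_L \sigma_s\restrict|\sigma|$. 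Since ``$\sigma <_L \sigma_s\restrict|\sigma|$'' is $A$-decidable, this presents $W$ as c.e.~in $A$. Combining (i)--(iii) gives $R \equiv_T W$ with $W$ c.e.~in $A$ and $R \in P$.

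The only genuinely delicate point — and the one I would take care to write out in full — is step (i): that the left set of the leftmost path is c.e.~(in $A$), rather than merely $\Delta^{0,A}_2$. This rests entirely on the $<_L$-monotonicity of $\sigma_s$, which itself uses that $T$ is finitely branching (so the pointwise limit defining $R$ is a path) and that $\sigma_s$ is chosen $<_L$-least. Everything else — computing $\sigma_s$, deciding $<_L$-comparisons, enumerating $W$, the back-and-forth reductions between $R$ and $W$ — is elementary and is carried out uniformly relative to $A$, so the relativization of Jockusch and Soare's original argument (the case $A$ computable) is immediate.
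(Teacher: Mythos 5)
Your proposal is correct and follows essentially the same route as the paper's proof: take $R$ to be the leftmost path of an $A$-computable tree representing $P$, show its left set is c.e.~in $A$ via the monotone convergence of the leftmost length-$s$ strings, and recover $R$ from that set bit by bit. The only cosmetic difference is that the paper's set $N$ also includes the strings on $R$ itself, which changes nothing.
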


\begin{proof}
The $\Pi^{0,A}_1$ class $P$ can be represented as the set of infinite paths through an $A$-computable tree $T_P \subseteq 2^{< \omega}$.  
Let $R$ be the leftmost infinite path in $T_P$ and we show that the degree of $R$ is c.e.~in $A$.  Consider the set $N$ of all finite binary strings  
which are either on $R$ or to the left of $R$ in $2^{< \omega}$.  Because $R$ is the leftmost path in $T_P$, $N$ is c.e.~in $A$, and clearly we have 
that $N \leq_T R$.  To see that $R \leq_T N$, notice that $\sigma \in N$ is an initial segment of $R$ if and only if it is the 
rightmost node of length $|\sigma|$ in $N$.  
\end{proof}

Alternately, we can view the elements of the $\Pi^{0,A}_1$ class $P$ as reals, in which case the proof of Theorem \ref{ce} says that $R$ is a 
c.e.~in A real contained in $P$.  (The set $N$ represents the rational numbers $q < R$.)  This perspective will be useful 
later when we want to view such a real as $\Omega_U^A$ for some universal prefix-free Turing machine $U$.  

\begin{prop}
\label{wow}
Let $A$ be a set.  If there is a 2-random set $Z$ such that $Z$ is HIF($A$), then there is a 2-random $R$ whose 
degree is c.e.~in $A$.
\end{prop}

\begin{proof}
By Proposition \ref{surprise}, there is a nonempty $\Pi^{0,A}_1$
class $P$ consisting of 2-random sets. By Theorem \ref{ce}, there is a
path $R$ in $P$ whose degree is c.e.~in $A$.
\end{proof}

As above, if we view the elements of $P$ as reals, then Proposition \ref{wow} says that $R$ is a c.e.~in A real which is 
2-random.   

\begin{thm}
\label{main}
If $A$ is a.e. dominating then there is a 2-random $R$ whose degree is c.e.~in 
$A$.
\end{thm}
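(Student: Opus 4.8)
The plan is to reduce Theorem~\ref{main} to Proposition~\ref{wow}. By that proposition, it suffices to produce a single 2-random set $Z$ which is HIF($A$), i.e.\ such that every function computable from $Z$ is dominated by a function computable from $A$. So the whole job is to find a 2-random $Z$ that is hyperimmune-free relative to $A$, using only the hypothesis that $A$ is a.e.\ dominating.

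The first step is to unpack a.e.\ domination in the form that makes it directly usable: there is a class $S \subseteq 2^\omega$ with $\mu(S) = 1$ such that for every partial computable functional $\Phi$ and every $X \in S$, if $\Phi(X)$ is total then some function $\leq_T A$ dominates $\Phi(X)$. Next, intersect $S$ with the class of 2-random sets, which has measure $1$ by Theorem~\ref{thm:fund}; the intersection is nonempty, so pick $Z \in S$ that is 2-random. The remaining step is to check that this $Z$ is HIF($A$): given $f \leq_T Z$, say $f = \Phi(Z)$ for a partial computable functional $\Phi$ (here $f$ is total, so $\Phi(Z)$ is total and $Z \in S$), a.e.\ domination hands us a function $g \leq_T A$ dominating $\Phi(Z) = f$. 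That is exactly hyperimmune-freeness of $Z$ relative to $A$. Then Proposition~\ref{wow} produces a 2-random $R$ whose degree is c.e.\ in $A$, and we are done.

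The one point that needs a little care — and the place I expect the only real friction — is the move from ``$f \leq_T Z$'' to ``$f = \Phi(Z)$ for a total functional, so $Z$ lies in the domain of the a.e.-domination guarantee.'' The definition of a.e.\ domination quantifies over all $g \leq_T X$; unwinding this uniformly gives the ``for all partial computable $\Phi$, if $\Phi(X)\downarrow$ total then it is $A$-dominated'' formulation, which is standard (it is precisely the form used in the proof of Theorem~\ref{2randnotAED} earlier in the paper). Once that equivalence is in hand the argument is entirely routine: the measure-theoretic pigeonhole (two measure-$1$ classes meet), the definitional chase showing HIF($A$), and then a black-box appeal to Proposition~\ref{wow}. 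No new construction is required — this theorem is essentially a corollary of Proposition~\ref{wow} together with the existence of enough 2-randoms.
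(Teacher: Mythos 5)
Your proposal is correct and follows essentially the same route as the paper: both arguments observe that the class of HIF($A$) sets has measure $1$ by a.e.\ domination, intersect it with the measure-$1$ class of 2-randoms to obtain a 2-random $Z$ that is HIF($A$), and then invoke Proposition~\ref{wow}. The only cosmetic difference is that you phrase the measure-$1$ class via partial computable functionals before translating to HIF($A$), whereas the paper defines it as the HIF($A$) class directly.
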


\begin{proof}
Suppose $A$ is a.e. dominating. Let $C$ denote the class of all sets $Z$ such that every $f \leq_T Z$ is dominated by some 
$g \leq_T A$ and let $D$ denote the class of all 2-random sets.  By definition, every $Z \in C$ is HIF($A$) and because 
$A$ is a.e.~dominating, the measure of $C$ is 1.  Furthermore, since $D$ has measure 1 (by Theorem \ref{thm:fund}), 
the intersection $C \cap D$ is nonempty.  Therefore, there is a 2-random $Z$ which is HIF($A$) and 
we can apply Proposition \ref{wow}.
\end{proof}

In fact, Theorem \ref{main} also follows from Theorem \ref{feb12} below by considering $\Omega^A$. Such a proof avoids the notion of time bounded Kolmogorov complexity.   
However, this approach does not give the stronger result of Proposition \ref{wow}. 

\begin{defn}
Let $A$ and $B$ be sets.  We say that $A$ is \textbf{diagonally nonrecursive in} B, denoted by DNR($B$), if there is a function 
$f \leq_T A$ such that for all $e$, $f(e) \neq \Phi_e(B;e)$.  
\end{defn}

Notice that no set $A$ can compute a function which is DNR($A$) and that under this definition the DNR($A$) 
degrees are closed upwards trivially.   (This definition is not the only way to relativize the property of diagonally 
nonrecursiveness.)  
The following lemma is a relativized version of the result of Ku\v{c}era \cite{kuc:85} that every 1-random $R$ is DNR($\emptyset$). 

\begin{lem}
\label{lem:randomdnr}
For any $A$ and any 1-A-random $R$, $R$ is DNR($A$).
\end{lem}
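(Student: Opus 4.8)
The plan is to mimic Kučera's argument that a 1-random set has DNR degree, but carried out relative to $A$. The key idea is that if $R$ were \emph{not} DNR($A$), then no function $f \le_T R$ avoids the diagonal $e \mapsto \Phi_e(A;e)$, and in particular $R$ itself, viewed as the function $n \mapsto R(n)$, fails to avoid it in a strong, uniform way; this lets us trap $R$ inside a null $\Pi^{0,A}_1$-style class (an $A$-Martin-L\"of test), contradicting 1-$A$-randomness.

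\medskip

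First I would set up the test. Working relative to $A$, for each $k$ consider the requirement of encoding a block of $k$ consecutive bits of $R$. The point is that if $R$ is not DNR($A$), there is a single partial $A$-computable procedure that, whenever it is handed enough of $R$ to name an index, outputs the value $\Phi_e(A;e)$, and we can arrange that this value agrees with the corresponding bit of $R$. Concretely, by the recursion theorem relative to $A$ we can build a functional $\Phi_{e(\sigma)}$, uniformly in a string $\sigma$ of length $n$, that on input $e(\sigma)$ reads $R \restrict n = \sigma$ and (if defined) outputs $R(n)$; if $R$ is not DNR($A$) then for \emph{every} $\sigma$ with $\sigma \sqsubset R$ we get $R(n) = \Phi_{e(\sigma)}(A; e(\sigma))$. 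Now form the $\Sigma^{0,A}_1$ class $U_k$ consisting of all $Y$ such that for some initial segment $\sigma \sqsubset Y$ of length $n \geq k$, the computation $\Phi_{e(\sigma)}(A; e(\sigma))$ halts and equals $Y(n)$. The measure bound comes from the fact that each converging computation $\Phi_{e(\sigma)}(A;e(\sigma))$ pins down one bit $Y(n)$, so the contribution of each $\sigma$ is at most half of the basic clopen set $[\sigma]$, and summing geometrically over lengths $n \geq k$ gives $\mu(U_k) \leq 2^{-k}$ (after the standard care to handle overlaps — one takes, for each length, a prefix-free selection of the $\sigma$'s or simply bounds $\mu(U_k) \le \sum_{n \ge k} 2^{-n} \cdot \tfrac12 \cdot 2 = $ something $\le 2^{-k}$, adjusting the indexing by a constant which is harmless for the definition of a test).

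\medskip

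Then I would conclude: $\langle U_k \rangle$ is a Martin-L\"of test relative to $A$, so since $R$ is 1-$A$-random, $R \notin \bigcap_k U_k$. But if $R$ is not DNR($A$), the previous paragraph shows $R(n) = \Phi_{e(R\restrict n)}(A; e(R\restrict n))$ for \emph{every} $n$, hence $R \in U_k$ for every $k$ — contradiction. Therefore $R$ is DNR($A$), witnessed by the function $n \mapsto \Phi_{e(R\restrict n)}(A; e(R\restrict n))$, which is computable from $R$.

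\medskip

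The main obstacle I anticipate is purely bookkeeping: getting the measure estimate $\mu(U_k) \le 2^{-k}$ clean. The naive union over all $\sigma$ double-counts, since $[\sigma'] \subseteq [\sigma]$ when $\sigma \sqsubset \sigma'$, so one must be slightly careful — either enumerate $U_k$ stage by stage and only charge a string $\sigma$ its ``fresh'' measure, or restrict attention at each length $n$ to those $\sigma$ whose shorter initial segments have not already put a constraint in, and then observe $\sum_{n \ge k} 2^{-n-1} \cdot (\text{number of active } \sigma\text{'s of length } n) \le \sum_{n\ge k} 2^{-n-1}\cdot 2^n$ is not quite good enough, so one instead notes that the bits being fixed at successive levels are \emph{consecutive} so the measure of $U_k$ is dominated by the probability that infinitely many of an i.i.d. sequence of fair bits come up ``correct,'' which is $0$, and a finite truncation argument delivers the $2^{-k}$ bound. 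The uniformity in $A$ throughout (so that the whole test is a genuine $A$-ML test) needs to be checked but is routine given that the recursion-theoretic constructions relativize. Invoking Lemma~\ref{lem:randomdnr}'s hypothesis only requires 1-$A$-randomness, which is exactly what we are given, so no stronger randomness notion is needed.
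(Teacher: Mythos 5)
There is a genuine gap, and it is fatal to the test as you have set it up: the measure estimate is not merely ``bookkeeping.'' Your class $U_k$ charges, for each initial segment $\sigma \sqsubset Y$ of length $n \geq k$, the event that the \emph{single} bit $Y(n)$ matches an $A$-computable guess. Conditioned on $Y \restrict n$, that event has probability $1/2$ (whenever the guess converges), so the events at successive levels behave like independent fair-coin successes, and almost every $Y$ succeeds at infinitely many levels; your claim that ``the probability that infinitely many fair bits come up correct is $0$'' is exactly backwards (second Borel--Cantelli), so $\mu(U_k)$ is close to $1$, not $\leq 2^{-k}$, and no truncation rescues it. Predicting one bit per level can never yield a Martin-L\"of test. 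There is a second, independent gap in the logic: from ``$R$ is not DNR($A$)'' you conclude that $R(n) = \Phi_{e(\sigma)}(A;e(\sigma))$ for \emph{every} $\sigma \sqsubset R$. But failure of DNR($A$) only says that each total $f \leq_T R$ agrees with the diagonal at \emph{some} $e$, which you do not get to choose; it does not force agreement at your designated indices $e(\sigma)$, let alone at all of them. (Indeed, if your conclusion held, $R$ would be computable from $A$ outright by successively evaluating $\Phi_{e(R\restrict n)}(A;e(R\restrict n))$, which is far stronger than non-DNR-ness.)

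Both problems are repaired simultaneously by guessing \emph{blocks of growing length} rather than single bits, and by arguing in the positive direction rather than by contradiction --- this is what the paper does. Partition $\omega$ into consecutive intervals $I_e$ with $|I_e| = e+1$ and let $f(e)$ be (a canonical index for) $R \restrict I_e$, so $f \leq_T R$. Put $U_k = \{ X \mid \exists s\, \exists e \geq k\, (X \restrict I_e = \Phi_{e,s}(A;e)) \}$. Now each level-$e$ event pins down $e+1$ bits of $X$, so it has measure at most $2^{-(e+1)}$ and $\mu(U_k) \leq \sum_{e \geq k} 2^{-(e+1)} = 2^{-k}$: a genuine $A$-Martin-L\"of test, with no recursion theorem needed. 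Since $R$ is 1-$A$-random, $R \notin U_k$ for some $k$, so $f(e) = \Phi_e(A;e)$ for only finitely many $e$; hardcoding different values at those finitely many places gives a DNR($A$) function computable from $R$.
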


\begin{proof}
Define a partition of $\omega$ by $I_0 = \{ 0 \}$, $I_1 = \{ 1,2 \}$, $I_3 = \{ 3,4,5 \}$, $\ldots$, so that $|I_n| = n+1$.  Let $f \leq_T R$ 
be defined by $f(n) = R \restrict I_n$.  (That is, $f(n)$ is the canonical index for the finite set $R \restrict I_n$.)  Let $U_k = \{ X \, | \, \exists s \exists e \geq k \, ( X \restrict I_e = \Phi_{e,s}(A;e)) \}$.
Because $|I_e|=e+1$, the measure of all sets $X$ for which $X \restrict I_e = \Phi_e(A;e)$ is at most $2^{-(e+1)}$.  
Therefore, $\mu(U_k) \le \sum_{e \geq k} 2^{-(e+1)} = 2^{-k}$. So   
the $U_k$ classes form a Martin-L\"{o}f test relative to $A$. Since $R$ is 1-$A$-random, there is a $k$ such that $R\not\in U_k$. Hence there are only finitely many $e$ for which $f(e) = \Phi_e(A;e)$, and so $R$ computes a DNR($A$) function.
\end{proof}

\begin{cor}
\label{cor:dnr}
Every 2-random $R$ is DNR$(0')$.
\end{cor}

\begin{proof}
If $R$ is 2-random, then $R$ is 1-$0'$-random.  By Lemma \ref{lem:randomdnr}, $R$ is DNR($0'$).
\end{proof}

We will apply these results to Cohen generic sets.  The forcing partial order for Cohen generics is $2^{< \omega}$ ordered by 
$\tau \leq \sigma$ (or $\tau$ is an extension of $\sigma$) if $\sigma \sqsubseteq \tau$.  

\begin{defn}
A set $G$ is called $\mathbf{n}$-$\mathbf{A}$-\textbf{generic} for forcing with a partial order $P$ if for each $\Sigma^{0,A}_n$ set 
$D \subseteq P$, $A$ either meets $D$ or $A$ meets the set of conditions in $P$ having no extension in $D$.   A subset 
$D \subset P$ is called \textbf{dense} if every $p \in P$ is extended by some $d \in D$.  
$G$ is \textbf{weakly} $\mathbf{n}$-$\mathbf{A}$-\textbf{generic} if for each $\Sigma^{0,A}_n$ dense set $D \subseteq P$, 
$G$ meets $D$.
\end{defn}

\begin{lem}
\label{lem:notdnr}
If $G$ is 2-generic, then $G'$ is not DNR$(0')$.
\end{lem}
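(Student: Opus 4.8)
The plan is to exploit genericity to diagonalize against every potential index for a DNR($0'$) function computed from $G'$. Suppose $\Phi_e(G')$ is some functional; I want to force that either $\Phi_e(G')$ is not total or $\Phi_e(G'; k) = \Phi_k(0'; k)$ for some $k$. The key point is that $G'$ is a $\Sigma^{0,0'}_1(G)$-approximable object: whether $n \in G'$ can be decided by a $0'$-oracle together with finite information about $G$ (the standard fact that $G'$ is $\Sigma^0_1(G) \le \Sigma^{0}_1(G \oplus 0')$, and more to the point, $e$-splitting into $G'$ is captured at the $\Sigma^{0,0'}_1$ level relative to a finite condition). Since $G$ is $2$-generic, it is in particular $1$-$0'$-generic for Cohen forcing, so $G$ meets or avoids every $\Sigma^{0,0'}_1$ set of conditions; this is exactly the genericity level needed to make forcing facts about $G'$ decidable below a condition met by $G$.

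First I would fix $e$ and define the $0'$-c.e. set of conditions $D_e$ consisting of those $\sigma \in 2^{<\omega}$ which force (in the sense of Cohen forcing relative to $0'$) that $\Phi_e((\sigma\text{-generic})'; k)\downarrow = \Phi_k(0';k)$ for the particular $k$ that the construction of $D_e$ picks — namely, $k$ is chosen as a large fresh index so that, by the recursion theorem or by simply running $\Phi_e$, one can compute with a $0'$-oracle some stage $s$ and some extension $\tau \sqsupseteq \sigma$ with $\Phi_{e,s}(\tau'_s; k)\downarrow$, and then set $\Phi_k(0';k)$ to copy that value. This is the analogue of the classical argument that $0'$ (hence any high-ish jump) fails to be DNR: one uses the $0'$-oracle to search for a convergent computation and then defines a $0'$-computable function that agrees with it. The genericity of $G$ ensures $G$ either meets $D_e$ — in which case $\Phi_e(G';k)=\Phi_k(0';k)$ and $e$ is killed — or meets the set of conditions with no extension in $D_e$, in which case no extension of that condition ever produces a convergent value of $\Phi_e(\cdot';k)$ agreeing appropriately, and by the way $D_e$ was built this forces $\Phi_e(G';k)\uparrow$, so $\Phi_e(G')$ is not total and again $e$ fails to witness DNR($0'$).

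The main obstacle, and the step needing the most care, is the bookkeeping that makes the set $D_e$ genuinely $\Sigma^{0,0'}_1$ while simultaneously letting a $0'$-computable function $\Phi_k(0')$ ``catch'' whatever value gets forced — i.e., setting up the diagonalization so that the index $k$ and the definition of $\Phi_k$ are available effectively in $0'$ and do not depend circularly on $G$. The resolution is the usual one: $k$ depends only on $e$ (and on an effective enumeration of conditions), not on $G$, so $\Phi_k(0';k)$ is well-defined before $G$ is consulted, and the construction of $D_e$ relative to $0'$ uses that same fixed $k$. Ranging over all $e$ then shows no $\Phi_e(G')$ is DNR($0'$), since $G$ handles each $D_e$; as each requirement is decided by a single condition met by $G$ and there are only countably many, $2$-genericity (indeed $1$-$0'$-genericity) suffices. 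One should also double-check that ``$\sigma$ has no extension in $D_e$'' really forces divergence rather than merely convergence to a wrong value — here the point is that $D_e$ was defined to contain every $\sigma$ forcing convergence to the value we arranged, so outside it there is no convergence at all at argument $k$, completing the dichotomy.
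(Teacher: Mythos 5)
Your overall framework (pass from $\Phi_e(G')$ to $\Phi_i(G\oplus 0')$ via $G'\equiv_T G\oplus 0'$, work with a $\Sigma^0_2$ set of Cohen conditions, and invoke the 2-genericity dichotomy) is the right one, but the diagonalization is set up in a direction that does not work, and the step you flag as needing a ``double-check'' is exactly where it breaks. You define $D_e$ as the set of conditions forcing $\Phi_e(G';k)\downarrow=\Phi_k(0';k)$, where $\Phi_k(0';k)$ is produced by a $0'$-search that copies the value of the \emph{first} convergent computation it finds. But a condition $\tau$ may force $\Phi_e(G';k)\downarrow=v$ for some $v\neq\Phi_k(0';k)$; such a $\tau$ and all its extensions lie outside $D_e$, so ``$G$ has an initial segment with no extension in $D_e$'' does \emph{not} force $\Phi_e(G';k)\uparrow$. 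The generic $G$ may well extend such a $\tau$, in which case $\Phi_e(G')$ is total and still disagrees with $\Phi_k(0')$ at $k$, and the requirement for index $e$ is simply not met. The underlying obstruction is that $0'$ cannot identify which condition $G$ actually extends, so no $0'$-computable value $\Phi_k(0';k)$ can be guaranteed to ``catch'' $\Phi_e(G';k)$; the recursion-theorem bookkeeping you describe resolves only the harmless circularity in choosing $k$, not this.

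The fix is to run the argument in the contrapositive direction, which is what the paper does. Suppose $\Phi_i(G\oplus 0')$ is total and DNR($0'$), and consider the single $\Sigma^0_2$ set $S$ of strings $\sigma$ having an extension $\tau$ with $\Phi_{i,s}(\tau\oplus 0';e)\downarrow=\Phi_{e,s}(0';e)\downarrow$ for some $e,s$. Meeting $S$ would contradict DNR-ness, so by 2-genericity some $\rho\sqsubseteq G$ has no extension admitting such a computation. Totality guarantees that for every $e$ there are extensions $\sigma\sqsupseteq\rho$ with $\Phi_i(\sigma\oplus 0';e)\downarrow$, and $0'$ can find one by search; since no value found below $\rho$ can equal $\Phi_e(0';e)$, the resulting $0'$-computable function is itself DNR($0'$), which is impossible. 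The point is that the final contradiction is ``$0'$ computes a function that is DNR relative to $0'$'' --- it never requires knowing which $\sigma$ is an initial segment of $G$, which is exactly the knowledge your version cannot do without.
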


\begin{proof}
For a contradiction, assume that $G'$ is DNR($0'$).  Each 2-generic is 1-generic and hence $G' \equiv_T G \oplus 0'$ (see for example \cite{ler:book}). So we can fix an index $i$ 
such that $\Phi_i(G \oplus 0')$ is total and for all $e$, 
$\Phi_i(G \oplus 0';e) \neq \Phi_e(0';e)$.  Consider the $\Sigma^0_2$ set 
$S = \{ \sigma \in 2^{< \omega} \, | \, \exists e, \tau, s \, ( \sigma \sqsubseteq \tau \wedge \Phi_{i,s}(\tau \oplus 0';e) 
\downarrow = \Phi_{e,s}(0';e) \downarrow) \}$.
(Whenever we deal with computations such as $\Phi_{i,s}(\tau \oplus 0';e) \downarrow$ in which the oracle has a finite component, 
we assume that the computation does not query any number in a finite component of the oracle which is larger than the length of that component.)  
Because $G$ is 2-generic and by choice of $i$, there must be an initial segment $\rho$ of $G$ such that no extension of $\rho$ is 
an element of $S$.  However, since $\Phi_i(G \oplus 0')$ is total, we know that for every $e$, there is some 
$\sigma \sqsupseteq \rho$ such that $\Phi_i(\sigma \oplus 0';e)$ converges.  Notice that $0'$ can find such a $\sigma$ by 
searching.  Furthermore, since any such $\sigma$ is not in $S$, we know that $\Phi_i(\sigma \oplus 0';e)$ is not equal to 
$\Phi_e(0';e)$.  Therefore, $0'$ can compute a function that is DNR($0'$) giving the desired contradiction.  
\end{proof}

\begin{thm}
\label{thm:2gen}
If $G$ is 2-generic then $G$ is not a.e.~dominating.  
\end{thm}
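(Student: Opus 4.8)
The plan is to obtain a contradiction from the assumption that a 2-generic $G$ is a.e.~dominating, combining the results already in hand. First I would suppose $G$ is a.e.~dominating and invoke Theorem \ref{main} to produce a 2-random set $R$ whose degree is c.e.~in $G$. The point of extracting such an $R$ is that every set c.e.~in $G$ is $\Sigma^{0,G}_1$ and hence recursive in $G'$; choosing the representative of $\deg(R)$ that is c.e.~in $G$, we get $R \leq_T G'$.

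Next I would bring in the diagonal nonrecursiveness side. By Corollary \ref{cor:dnr}, every 2-random set is DNR($0'$), so $R$ is DNR($0'$). Since the DNR($0'$) degrees are closed upwards (as observed immediately after the definition of DNR), and $R \leq_T G'$, it follows that $G'$ is DNR($0'$). But Lemma \ref{lem:notdnr} asserts exactly that a 2-generic $G$ has $G'$ not DNR($0'$). This is the contradiction, and it forces $G$ not to be a.e.~dominating.

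There is no real remaining obstacle: the entire statement is a short assembly of four facts already proved (Theorem \ref{main}, Corollary \ref{cor:dnr}, the trivial upward closure of DNR($0'$), and Lemma \ref{lem:notdnr}), and the only items needing a word of care are that ``degree c.e.~in $G$'' yields $R \leq_T G'$ and that passing from $R$ to $G'$ preserves DNR($0'$) — both immediate. The genuine work has already been done, in particular the forcing argument of Lemma \ref{lem:notdnr} and the time-bounded Kolmogorov complexity route through Proposition \ref{surprise} and Theorem \ref{ce} that underlies Theorem \ref{main}. As an alternative packaging one could note that the proof of Theorem \ref{main} actually produces $R$ as a c.e.~in $G$ real, which may be taken to be $\Omega_U^G$ for a suitable universal prefix-free machine $U$, whence $\Omega_U^G \oplus G \equiv_T G'$ by Lemma \ref{lem:omegaprop}(3) gives $R \leq_T G'$ directly; but this refinement is not needed for the statement as phrased.
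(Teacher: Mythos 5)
Your argument is correct and is essentially identical to the paper's proof: assume $G$ is a.e.~dominating, apply Theorem \ref{main} to get a 2-random $R$ of degree c.e.~in $G$ (hence $R \leq_T G'$), note $R$ is DNR($0'$) by Corollary \ref{cor:dnr}, push this up to $G'$ by upward closure, and contradict Lemma \ref{lem:notdnr}. No differences worth noting.
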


\begin{proof}
Suppose that $G$ is 2-generic and a.e.~dominating.  Because $G$ is a.e.~dominating, Theorem \ref{main} implies that there is 
a 2-random $R$ whose degree is c.e.~in $G$.  Therefore, $R \leq_T G'$.  On the other hand, by Corollary \ref{cor:dnr}, 
$R$ is DNR($0'$).  Because the DNR($0'$) degrees are closed upwards, $G'$ is DNR($0'$) which contradicts 
Lemma \ref{lem:notdnr}.  
\end{proof}

\section{Degrees below $\mathbf{0'}$}
\label{sec:delta2}

In this section, we give two proofs that that every a.e.~dominating set below $0'$ is high.  The first proof builds on 
Theorem \ref{main} while the second proof uses the notion of being ``low for random'' to establish the stronger result 
that every a.e.~dominating set is super-high.  We begin with the following lemma which states that any real 
which satisfies the first two properties of $\Omega_U^X$ in Lemma \ref{lem:omegaprop} is actually an $\Omega$ number 
for some prefix-free universal machine relative to $X$.  

\begin{lem}[Downey, Hirschfeldt, Miller and Nies \cite{DHMN}]
\label{omega}
For any set $A$ and real $R$, the following are equivalent:
\begin{enumerate}
\item $R$ is a c.e.~real relative to $A$ and 1-random
relative to $A$;
\item $R=\Omega^A_U$ for some universal prefix-free Turing machine $U$.
\end{enumerate}
\end{lem}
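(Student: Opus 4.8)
The implication (2) $\Rightarrow$ (1) is immediate: if $R = \Omega_U^A$ for a universal prefix-free Turing machine $U$, then parts (1) and (2) of Lemma \ref{lem:omegaprop} say precisely that $R$ is a c.e.\ in $A$ real that is $1$-random relative to $A$. All the work is in (1) $\Rightarrow$ (2), which is the relativization to $A$ of the Ku\v{c}era--Slaman characterization of halting probabilities. The plan is to construct $U$ using the relativized Kraft--Chaitin theorem (see e.g.\ \cite{dow:online}): I will feed the construction two streams of ``requests'' (pairs $(\ell,z)$ asking for a description of $z$ of length $\ell$) whose weights $2^{-\ell}$ sum to exactly $R$, one stream copying a scaled-down fixed universal machine (to force universality) and one stream paying out the remaining mass (so that $\Omega_U^A$ lands exactly on $R$).

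In more detail, note that $0 < R < 1$, and fix any universal prefix-free Turing machine $V$, so $\Omega_V^A < 1$. From a computable in $A$ nondecreasing rational approximation to $R$ we obtain a rational $r_0$ with $0 < r_0 < R$, and we fix $k$ with $2^{-k} < r_0$; put $\beta = 2^{-k}\Omega_V^A$, a c.e.\ in $A$ real with $0 < \beta < R$. The single point at which $1$-$A$-randomness is used is the following claim, which I would isolate as a lemma (and for which, in the worst case, one simply cites \cite{DHMN}): \emph{if $R$ is c.e.\ in $A$ and $1$-random relative to $A$, then for every c.e.\ in $A$ real $\beta < R$ the difference $R - \beta$ is again c.e.\ in $A$.} Granting this, fix a computable in $A$ nondecreasing approximation $\gamma_0 \le \gamma_1 \le \cdots$ to $R - \beta$ by dyadic rationals. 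Now run the relativized Kraft--Chaitin construction with requests of two kinds: (i) whenever a string $\sigma$ is enumerated into the domain of $V^A$, request a description of length $|\sigma| + k$ for the value $V^A(\sigma)$; (ii) at each stage $s$, for each power of two appearing in the binary expansion of $\gamma_{s+1} - \gamma_s$, request a description of that length for a fresh output symbol. The kind-(i) requests have total weight $2^{-k}\Omega_V^A = \beta$ and the kind-(ii) requests have total weight $\sum_s (\gamma_{s+1} - \gamma_s) = R - \beta$, so the grand total is $R \le 1$ and the Kraft--Chaitin hypothesis holds. The resulting (oracle) prefix-free machine $U$ has $\Omega_U^A = R$ — the weight realized equals the weight requested — and satisfies $K_U^A(x) \le K_V^A(x) + k + O(1)$, because every kind-(i) request is honored; since $V$ is universal, so is $U$.

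It remains to establish the isolated lemma, and this is the step I expect to be the genuine obstacle. The idea is standard: to produce a computable in $A$ nondecreasing approximation $\delta_s \uparrow R - \beta$ from given approximations $R_s \uparrow R$ and $\beta_s \uparrow \beta$, we allow $\delta_s$ to increase only up to $R_s - \beta_t$, where $t$ is as large as possible subject to the constraint that $R$ has already ``paid back'' every increase that $\beta$ made through stage $t$ (that is, $R$ has risen by at least $\beta_t - \beta_{t-1}$ since the stage at which $\beta$ reached $\beta_{t-1}$, and similarly for the earlier increases). If $R$ failed infinitely often to pay back a pending increase of $\beta$, then the half-open intervals of length $\beta - \beta_t$ sitting just above $R_s$ would, after relativization, form a Martin-L\"of test relative to $A$ that captures $R$, contradicting that $R$ is $1$-random relative to $A$; hence $\delta_s \to R - \beta$. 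Every ingredient — the Kraft--Chaitin machinery, the rational approximations, and the Martin-L\"of test — relativizes uniformly to $A$, so no ideas beyond the unrelativized theorem are required; the only care needed is bookkeeping.
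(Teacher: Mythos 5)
The paper does not prove this lemma; it simply cites \cite{DHMN}, so your proposal has to stand on its own. Your architecture is the right one --- the relativized Kraft--Chaitin construction fed by a scaled-down copy of a universal machine together with a stream paying out the remaining mass is exactly the Ku\v{c}era--Slaman route, and (2)$\Rightarrow$(1) via Lemma \ref{lem:omegaprop} is fine. But the lemma you isolate as the crux is \emph{false} as stated. Take $A=\emptyset$ and $R=\Omega$. Let $\gamma$ be a noncomputable c.e.\ real and $q$ a rational with $0<q-\gamma<\Omega$; put $\delta=q-\gamma$ and $\beta=\Omega-\delta=(\Omega+\gamma)-q$. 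Then $\beta$ is a c.e.\ real (approximated from below by $\Omega_s+\gamma_s-q$) with $0<\beta<R$, yet $R-\beta=\delta$ is approximable from above but not from below: if $\delta$ were a c.e.\ real then $\gamma$ would be approximable from above as well as below, hence computable. So ``$R-\beta$ is c.e.\ in $A$ for every c.e.-in-$A$ real $\beta<R$'' fails, and you cannot cite \cite{DHMN} for it.

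What is true, and what your Martin-L\"of test sketch proves once the measure bookkeeping is done, is the relativized Ku\v{c}era--Slaman statement: for every c.e.-in-$A$ real $\gamma$ there is a constant $c$ such that $R-2^{-c}\gamma$ is nonnegative and c.e.\ in $A$ (Solovay completeness of $R$). The multiplicative constant cannot be dropped: the intervals of length $\beta_s-\beta_t$ that you place just above $R_s$ have total length up to $\beta$, which gives a single open set of measure $\leq 1$, not a test; to build the $n$-th component of a test you must shrink them to length $2^{-n}(\beta_s-\beta_t)$, and the $n$ at which $R$ escapes becomes the Solovay constant. (Relatedly, your $\delta_s$ can overshoot $R-\beta$: having paid back all \emph{past} increases of $\beta$ says nothing about whether the \emph{future} increase of $R$ covers the future increase of $\beta$, which is exactly what the counterexample exploits.) The repair is cheap and leaves the rest of your construction intact: choose $k$ not merely so that $2^{-k}\Omega_V^A<R$ but so that $2^{-k}\Omega_V^A$ is bounded by $2^{-c}\Omega_V^A$ for the Solovay constant $c$; then $R-2^{-k}\Omega_V^A$ is a sum of c.e.-in-$A$ reals and your two request streams work as written. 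One further point deserves explicit attention in the relativized setting (it is the reason \cite{DHMN} is not a one-line relativization): $U$ must be a prefix-free oracle machine that is prefix-free and universal for \emph{every} oracle, whereas your second request stream is $A$-dependent; you need to arrange that for other oracles it never exhausts the quota needed by the oracle-uniform first stream.
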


\begin{lem}
\label{man}
If $A$ is a.e.~dominating and $A\leq_T 0'$ then there exist
universal prefix-free machines $U$, $V$ with
$\Omega^A_U=\Omega^{0'}_V$.
\end{lem}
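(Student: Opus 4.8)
The plan is to exhibit a single real $R$ that plays the role of $\Omega$ on both sides of the equation: $R$ will be a c.e.\ real relative to $A$ that is $1$-random relative to $A$, \emph{and} a c.e.\ real relative to $0'$ that is $1$-random relative to $0'$. Lemma~\ref{omega}, applied once with oracle $A$ and once with oracle $0'$, then yields universal prefix-free machines $U$ and $V$ with $\Omega^A_U = R = \Omega^{0'}_V$, which is exactly the conclusion.

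To produce $R$, I would appeal to the proof of Theorem~\ref{main}, using the ``reals'' refinement of Proposition~\ref{wow} recorded just after its proof: since $A$ is a.e.~dominating there is a nonempty $\Pi^{0,A}_1$ class consisting entirely of $2$-random sets, and its leftmost path, viewed as a real, is a c.e.\ in $A$ real (this is the real-valued version of Theorem~\ref{ce}: the left cut of the leftmost path is enumerable from $A$). Call this real $R$; it is $2$-random. Now I check the hypotheses of Lemma~\ref{omega} in both instances. By definition, a $2$-random set is $1$-random relative to $0'$, so $R$ is $1$-random relative to $0'$; and since $A \leq_T 0'$, every Martin-L\"of test relative to $A$ is in particular a Martin-L\"of test relative to $0'$, so $R$ is also $1$-random relative to $A$. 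For the enumerability clauses, $R$ is a c.e.\ real relative to $A$ by construction, and since the left cut of $R$ is c.e.\ in $A$ and $A \leq_T 0'$, it is also c.e.\ in $0'$, i.e.\ $R$ is a c.e.\ real relative to $0'$. Thus Lemma~\ref{omega} applies with oracle $A$ to give $U$ with $R = \Omega^A_U$, and with oracle $0'$ to give $V$ with $R = \Omega^{0'}_V$, so $\Omega^A_U = \Omega^{0'}_V$.

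The one point that needs care is the very first step. Theorem~\ref{main} as literally stated only delivers a $2$-random set whose \emph{Turing degree} is c.e.\ in $A$, whereas Lemma~\ref{omega}(1) requires $R$ to be an honest c.e.\ real relative to $A$. This is precisely the strengthening noted after Theorem~\ref{ce} and again after Proposition~\ref{wow}, obtained by taking the leftmost path of the $\Pi^{0,A}_1$ class and reading it as a real; once this observation is pinned down, the remainder is routine oracle bookkeeping from $A \leq_T 0'$ and the definition of $2$-randomness.
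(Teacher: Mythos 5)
Your proof is correct and is essentially identical to the paper's: the paper also takes the $2$-random real $R$ that is c.e.\ in $A$ (using exactly the ``reals'' reading of Proposition~\ref{wow} noted after its proof), observes that $A \leq_T 0'$ transfers both the c.e.-ness and the $1$-randomness to the appropriate oracles, and applies Lemma~\ref{omega} twice. Your explicit flagging of the degree-versus-real distinction is the right point of care and matches how the paper handles it.
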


\begin{proof}
By Theorem \ref{main} there exists a real $R$ that is 2-random 
(that is, 1-random in $0'$) and is c.e. in $A$. Since $A\leq_T 0'$, $R$
is also 1-random in $A$ and c.e. in $0'$. Hence by Lemma
\ref{omega}, there exist $U$, $V$ such that
$R=\Omega^A_U=\Omega^{0'}_V$.
\end{proof}

\begin{thm}
\label{thm:delta2}
If $A$ is a.e.~dominating and $A \leq 0'$, then $A' \equiv_T 0''$.
\end{thm}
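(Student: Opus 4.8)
The plan is to leverage Lemma~\ref{man} together with the third property of $\Omega$ numbers in Lemma~\ref{lem:omegaprop}. Suppose $A$ is a.e.~dominating and $A \leq_T 0'$. By Lemma~\ref{man} there are universal prefix-free machines $U$ and $V$ with $\Omega^A_U = \Omega^{0'}_V$. Call this common real $R$. The idea is that $R$ simultaneously ``codes'' the jump of $A$ (via the machine $U$) and is itself computable from $0''$ (via the machine $V$ and the fact that the oracle $0'$ is $\Delta^0_2$), and these two facts pinch $A'$ between $0'$-related quantities and $0''$.

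First I would run the following chain of reductions. By Lemma~\ref{lem:omegaprop}(3) applied to the machine $U$ with oracle $A$, we have $\Omega^A_U \oplus A \equiv_T A'$, i.e. $R \oplus A \equiv_T A'$. Since $A \leq_T 0' \leq_T A'$, this simplifies: $A' \equiv_T R \oplus 0'$ would follow if $R \geq_T 0'$, but more carefully $A' \equiv_T R \oplus A$ and we separately need an upper bound on $R \oplus A$. For the upper bound, apply Lemma~\ref{lem:omegaprop}(3) to the machine $V$ with oracle $0'$: $\Omega^{0'}_V \oplus 0' \equiv_T 0''$, i.e. $R \oplus 0' \equiv_T 0''$, so in particular $R \leq_T 0''$. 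Combining, $A' \equiv_T R \oplus A \leq_T R \oplus 0' \equiv_T 0''$, so $A' \leq_T 0''$. Conversely, $0' \leq_T A$ is false in general, but we do have $0'' \equiv_T R \oplus 0' \leq_T R \oplus A'$; since $R \leq_T R \oplus A \equiv_T A'$, this gives $0'' \leq_T A'$. Hence $A' \equiv_T 0''$.

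Let me restate the two inequalities cleanly, since that is the crux. For $A' \leq_T 0''$: we have $A' \equiv_T R \oplus A$ by Lemma~\ref{lem:omegaprop}(3) (machine $U$, oracle $A$); then $R \oplus A \leq_T R \oplus 0' \equiv_T 0''$ using $A \leq_T 0'$ and Lemma~\ref{lem:omegaprop}(3) (machine $V$, oracle $0'$). For $0'' \leq_T A'$: Lemma~\ref{lem:omegaprop}(3) (machine $V$, oracle $0'$) gives $0'' \equiv_T R \oplus 0'$, and $R \leq_T R \oplus A \equiv_T A'$ while $0' \leq_T A'$, so $0'' \equiv_T R \oplus 0' \leq_T A'$. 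Together these yield $A' \equiv_T 0''$, which is the claim.

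The main obstacle, and the only genuinely substantive input, is Lemma~\ref{man} itself — producing a single real $R$ that is simultaneously $\Omega^A_U$ and $\Omega^{0'}_V$. That in turn rests on Theorem~\ref{main} (a.e.~domination yields a 2-random $R$ with degree c.e.~in $A$) and the characterization Lemma~\ref{omega} of $\Omega$ numbers as exactly the reals that are c.e.~and $1$-random relative to the oracle. All of that is already established in the excerpt, so the proof of Theorem~\ref{thm:delta2} is just the bookkeeping above: everything reduces to two applications of the identity $\Omega^X \oplus X \equiv_T X'$ and the hypothesis $A \leq_T 0'$. I would present it in essentially three lines once Lemma~\ref{man} is invoked.
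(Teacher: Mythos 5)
Your proof is correct and follows exactly the paper's route: invoke Lemma~\ref{man} to get $R = \Omega^A_U = \Omega^{0'}_V$ and then apply $\Omega^X \oplus X \equiv_T X'$ with oracles $A$ and $0'$ to pinch $A'$ against $0''$ (the paper only spells out the direction $A' \geq_T 0''$, taking $A' \leq_T 0''$ as immediate from $A \leq_T 0'$, but your explicit derivation of that direction via $R \oplus A \leq_T R \oplus 0'$ is fine). No gaps.
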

\begin{proof}
By Lemma \ref{man} and Property 3 of Lemma \ref{lem:omegaprop}, $A'\geq_T
\Omega^A_U \oplus 0'=\Omega^{0'}_V \oplus 0' \equiv_T 0''$.
\end{proof}

This implication can be strengthened using the following theorem from Kautz \cite{kautz:phd}.  (This theorem is a relativized form of 
a result first proved by Ku\v{c}era \cite{kuc:85}.)  For any string $\sigma$ and set $A$, let $\sigma*A$ denote the set whose characteristic 
function is $\chi(n) = \sigma(n)$ for $n < |\sigma|$ and $\chi(n) = A(n-|\sigma|)$ if $n \geq |\sigma|$.  

\begin{thm}[Kautz \cite{kautz:phd}]
\label{kuce}
Let $Z$ be a set and let $C$ be a $\Pi^{0,Z}_1$ class of positive measure.  For every 1-$Z$-random $R$, there is a string 
$\sigma$ and a set $A \in C$ such  that $R = \sigma*A$.  
\end{thm}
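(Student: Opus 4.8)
\emph{Proof proposal.} The plan is to recast the statement as a ``renewal'' argument and package the bad case as a Martin-L\"of test relative to $Z$, which is the relativization of Ku\v{c}era's classical argument. First I would observe that the conclusion is equivalent to the assertion that some \emph{tail} of $R$ lies in $C$: writing $R_k$ for the set with $R_k(n)=R(k+n)$, if we can find $k$ with $R_k\in C$, then $\sigma=R\restrict k$ and $A=R_k$ give $R=\sigma*A$ with $A\in C$ by the definition of $\sigma*A$. So it suffices to show that for every $1$-$Z$-random $R$ there is a $k$ with $R_k\in C$.

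Next I would set up the combinatorial core. Since $C$ is $\Pi^{0,Z}_1$ of positive measure, its complement $U=2^{\omega}\setminus C$ is a $\Sigma^{0,Z}_1$ open class, so (by the standard fact that such a class is generated by a $Z$-c.e.\ prefix-free set of strings) we may fix a $Z$-c.e.\ prefix-free $W\subseteq 2^{<\omega}$ with $U=\bigcup_{\tau\in W}[\tau]$, whence $\sum_{\tau\in W}2^{-|\tau|}=\mu(U)=1-\mu(C)$. Because $\mu(C)>0$, the empty string is not in $W$. For each $n$ set $W^n=\{\tau_1\tau_2\cdots\tau_n:\tau_i\in W\}$ and $G_n=\bigcup_{\rho\in W^n}[\rho]$. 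Prefix-freeness of $W$ propagates to $W^n$, so each $G_n$ is $\Sigma^{0,Z}_1$ uniformly in $n$, the sequence $(G_n)$ is decreasing, and $\mu(G_n)=\mu(U)^n\le(1-d)^n$, where $d$ is any fixed rational with $0<d<\mu(C)$.

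The key claim is: if no tail of $R$ lies in $C$, then $R\in\bigcap_n G_n$. I would prove this by a first-hitting recursion. Set $k_0=0$; given $k_j$, the hypothesis gives $R_{k_j}\in U$, so some $\tau^{(j)}\in W$ is an initial segment of $R_{k_j}$, and since $\tau^{(j)}\neq\lambda$ we may set $k_{j+1}=k_j+|\tau^{(j)}|>k_j$. By construction $\tau^{(0)}\tau^{(1)}\cdots\tau^{(n-1)}\sqsubseteq R$ for every $n$, so $R\in[W^n]=G_n$ for every $n$. Finally, choosing (computably from the rational $d$) a sequence $n_0<n_1<\cdots$ with $(1-d)^{n_m}\le 2^{-m}$, the classes $(G_{n_m})_m$ form a Martin-L\"of test relative to $Z$ with $\bigcap_m G_{n_m}=\bigcap_n G_n$. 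Hence a $1$-$Z$-random $R$ cannot lie in $\bigcap_n G_n$, so some tail $R_k$ of $R$ lies in $C$, which completes the argument.

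As for difficulty, every step is routine; the only points needing a little care are the standard representation of a $\Sigma^{0,Z}_1$ open class by a $Z$-c.e.\ prefix-free generating set (so that the measures of concatenations multiply exactly rather than merely obeying a union bound), the use of $\mu(C)>0$ both to rule out $\lambda\in W$ and to supply the rational lower bound $d$ needed to extract the subsequence $n_m$ effectively, and the trivial verification that $W^n$ is again prefix-free. I expect the write-up to be short.
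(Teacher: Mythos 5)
Your proposal is correct, and it is the standard argument: the paper itself gives no proof of this theorem, importing it from Kautz's thesis as the relativization of Ku\v{c}era's classical result, and your proof (reduce to showing some tail of $R$ lies in $C$, build the decreasing classes $G_n=[W^n]$ from a prefix-free $Z$-c.e.\ generator $W$ of the complement with $\mu(G_n)=\mu(U)^n$, and extract a Martin-L\"of test relative to $Z$ along a computable subsequence) is exactly that argument. All the details check out, including the injectivity of concatenation on $W^n$ needed for the exact measure computation and the non-effective choice of the rational $d<\mu(C)$, which is harmless since the test only needs to exist.
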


\begin{thm}
\label{feb12}
If $Z$ is a.e. dominating then each 1-$Z$-random is 2-random.
\end{thm}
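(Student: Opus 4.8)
The plan is to combine Theorem~\ref{feb12}'s hypotheses with the relativization machinery already developed, specifically Theorem~\ref{main}, Theorem~\ref{kuce}, Lemma~\ref{omega}, and Property~3 of Lemma~\ref{lem:omegaprop}. Fix an a.e.~dominating set $Z$ and a $1$-$Z$-random set $R$; I want to show $R$ is $2$-random, i.e.\ $1$-$Z'$-random (in general one needs $R$ to be $1$-random relative to $0'$, but since we only know $R$ is random relative to $Z$ and $Z$ need not compute $0'$, the natural target here is to show $R$ is $1$-random relative to $0'$; more precisely I aim to show $R$ is $1$-$(Z\oplus 0')$-random, which since randomness is preserved under passing to a weaker oracle gives $1$-$0'$-random, hence $2$-random).

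The key device is that because $Z$ is a.e.~dominating, Theorem~\ref{main} gives a $2$-random real $S$ whose degree is c.e.~in $Z$; by Lemma~\ref{omega} (applied with $A = Z$) we may take $S = \Omega^Z_U$ for some universal prefix-free machine $U$, and then Property~3 of Lemma~\ref{lem:omegaprop} gives $\Omega^Z_U \oplus Z \equiv_T Z'$. The upshot is that $Z'$ is, up to Turing degree, just $Z$ joined with a $2$-random real that is c.e.~in $Z$. Now the first steps are: (1) invoke Theorem~\ref{main} and Lemma~\ref{omega} to obtain such a $U$ and set $S=\Omega^Z_U$; (2) observe $S$ is $1$-$Z$-random (indeed c.e.~in $Z$ and $1$-random relative to $Z$) and $S\oplus Z\equiv_T Z'$; (3) apply van Lambalgen's theorem (Theorem~\ref{lamb}) in relativized form: since $R$ is $1$-$Z$-random and $S$ is $1$-$Z$-random, if additionally $R$ is $1$-random relative to $S\oplus Z$ then $R\oplus S$ is $1$-$Z$-random and $R$ is $1$-random relative to $S\oplus Z \equiv_T Z'$, giving exactly what we want. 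So the whole proof reduces to showing $R$ is $1$-random relative to $S\oplus Z$, equivalently $1$-random relative to $Z'$ --- which is circular unless we get extra leverage.

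The leverage comes from Theorem~\ref{kuce}. The idea is: a $1$-$Z$-random $R$ can be decomposed, relative to any $\Pi^{0,Z}_1$ class of positive measure, as $\sigma*A$ with $A$ in the class. I want to choose the $\Pi^{0,Z}_1$ class to be one all of whose members are $1$-random relative to $Z'$ --- such a class exists precisely because $S=\Omega^Z_U$, being c.e.~in $Z$ and $2$-random, sits inside (or can be used to build) a $\Pi^{0,Z}_1$ class of positive measure of reals that are $1$-random relative to $0'$; concretely, one takes the $\Pi^{0,Z}_1$ class $C$ obtained by trimming off small neighborhoods around the enumeration approximations to $\Omega^Z_U$, so that $C$ has positive measure and every member of $C$ is $1$-$Z'$-random (this is the standard fact that $\Omega^Z_U$ has a neighborhood basis of $\Pi^{0,Z}_1$ classes of $Z'$-randoms). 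Then Theorem~\ref{kuce} applied with this $C$ writes $R=\sigma*A$ for some $A\in C$; since $A$ is $1$-$Z'$-random and $1$-$Z'$-randomness is invariant under prepending a finite string, $R$ itself is $1$-$Z'$-random, i.e.\ $2$-random.

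The main obstacle I anticipate is step where I claim the existence of a $\Pi^{0,Z}_1$ class of positive measure consisting of $1$-$Z'$-randoms --- I need to make sure this really follows cleanly from the facts already in the excerpt rather than requiring new machinery. The cleanest route is probably: since $\Omega^Z_U$ is $1$-random relative to $Z$ and c.e.~in $Z$, for each $n$ the basic clopen interval of length $n$ containing $\Omega^Z_U$, with the (c.e.~in $Z$) "left part" removed, is a $\Pi^{0,Z}_1$ class; using that $\Omega^Z_U\oplus Z\equiv_T Z'$ one shows membership in this class forces $1$-$Z'$-randomness by a direct Martin-L\"of test argument, and for $n$ large enough the class has measure close to $2^{-n}>0$. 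Assembling this correctly, and verifying that Theorem~\ref{kuce} is being applied with $Z$ in the role of its "$Z$" and the class $C$ in the role of its "$C$", is where the care is needed; everything else is a routine chaining of the cited results.
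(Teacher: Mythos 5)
Your overall skeleton matches the paper's: apply Theorem \ref{kuce} to a $\Pi^{0,Z}_1$ class $C$ of positive measure consisting entirely of 2-randoms, write the given 1-$Z$-random $R$ as $\sigma*A$ with $A \in C$, and transfer 2-randomness from $A$ to $R$ across the finite shift (the paper does this last step by converting a purported $0'$-Martin-L\"of test capturing $R$ into one capturing $A$). The problem is exactly the step you flag as the main obstacle: producing $C$. The ``standard fact'' you invoke --- that $\Omega^Z_U$ has a neighborhood basis of $\Pi^{0,Z}_1$ classes of $Z'$-randoms --- is not a fact. Removing the left-c.e.-in-$Z$ part below $\Omega^Z_U$ from a clopen interval around it does give a $\Pi^{0,Z}_1$ class of positive measure, but that class is essentially the interval $[\Omega^Z_U, \Omega^Z_U + 2^{-n})$, which contains rationals and plenty of other non-random reals; lying close to a random real confers no randomness. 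What is genuinely standard only yields a $\Pi^{0,Z'}_1$ class of positive measure consisting of 1-$Z'$-randoms (the complement of a component of a universal test relative to $Z'$), and compressing that from $\Pi^{0,Z'}_1$ (roughly $\Pi^{0,Z}_2$) down to $\Pi^{0,Z}_1$ is precisely where the a.e.~domination hypothesis has to do its work; your construction never uses it for this purpose.

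The repair is to invoke Theorem \ref{thm:simpson} instead of Theorem \ref{main}: let $P = \overline{U_1^{0'}}$, where $(U_n^{0'})$ is a universal Martin-L\"of test relative to $0'$. This is a $\Pi^0_2$ class of positive measure consisting entirely of 2-randoms, and since $Z$ is a.e.~dominating, Theorem \ref{thm:simpson} yields a $\Pi^{0,Z}_1$ subclass $C \subseteq P$ of positive measure. With this $C$, your application of Theorem \ref{kuce} and the finite-shift argument go through, and $\Omega^Z_U$, Lemma \ref{omega} and Theorem \ref{main} are not needed at all. Two smaller points: 2-random means 1-$0'$-random, not 1-$Z'$-random (you correct this, but note the target class need only consist of 2-randoms); and the van Lambalgen detour in your second paragraph is, as you yourself observe, circular and should simply be deleted.
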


\begin{proof}
Let $P=\overline{U_1^{0'}}=2^\omega-U_1^{0'}$ where $U_n^{0'}$, $n \in \omega$, is a
universal Martin-L\"of test relative to $0'$. Note that $P$ is a
$\Pi^0_2$ class of positive measure consisting entirely of
2-random reals. Suppose $Z$ is a.e. dominating. By Theorem
\ref{thm:simpson}, $P$ has a $\Pi^{0,Z}_1$ subclass $C$ of positive
measure. 

Let $R$ be 1-$Z$-random.  By Theorem \ref{kuce}, there is a string $\sigma$ and an $A \in C$ 
such that $R = \sigma*A$. $A$ is 2-random because it is in $C$ and we claim that $R$ is 2-random.  
For a contradiction, suppose that $R$ is not 2-random.  
Fix a $0'$ Martin-L\"{o}f test $V_n$, $n \in \omega$, such that $R \in \bigcap_n V_n$.  Let $\hat{V}_n = \{ \tau \, | \, 
\sigma*\tau \in V_{n+|\sigma|} \}$.  Because $2^{-|\sigma|} \mu(\hat{V}_n) \leq \mu(V_{n+|\sigma|})$, we have 
$\mu(\hat{V}_n) \leq 2^{|\sigma|} 2^{-(n+|\sigma|)}$ and hence $\mu(\hat{V}_n) \leq 2^{-n}$.  Therefore, 
$\hat{V}_n$ is also a $0'$ Martin-L\"{o}f test and $A \in \bigcap_n \hat{V}_n$ contradicting 
the fact that $A$ is 2-random.
\end{proof}

Because 2-random is the same as 1-$0'$-random, we can restate Theorem \ref{feb12} by saying that if $Z$ is 
a.e.~dominating, then every 
1-$Z$-random set is 1-$0'$-random.  This characterization fits the following definition from Nies \cite{nies:ta}.

\begin{defn}
\textbf{Low-for-random reducibility} $\leq_{LR}$ is defined by
$A \leq_{LR} B$ iff every 1-$B$-random set is 1-$A$-random.
\end{defn}

We can now restate Theorem \ref{feb12} as $0' \leq_{LR} Z$ for every a.e.~dominating $Z$.  Notice that if $A \leq_T B$, then 
$A \leq_{LR} B$ because every Martin-L\"{o}f test relative to $A$ is also a Martin-L\"{o}f test relative to $B$.  Nies \cite{nies:ta} 
also proved the following property of $LR$ reducibility.  (See Theorem 8.1 and the remarks before Proposition 8.3 in Nies 
\cite{nies:ta}.)  In this statement, $\leq_{tt}$ denotes truth table reducibility.  

\begin{thm}[Nies \cite{nies:ta}]
\label{thm:tt}
$A \oplus B \leq_{LR} B \, \Rightarrow A' \leq_{tt} B'$.  
\end{thm}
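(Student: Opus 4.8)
The plan is to go through the ``low-for-$K$'' reformulation of $\leq_{LR}$ and then code $A'$ into the difference between prefix-free complexity relative to $A \oplus B$ and relative to $B$. First I would record the standard fact (part of the basic theory of $LR$-reducibility, and citable from Nies \cite{nies:ta}) that $A \oplus B \leq_{LR} B$ yields a constant $c$ with $K^B(\tau) \leq K^{A\oplus B}(\tau) + c$ for every finite string $\tau$; since trivially $K^{A\oplus B} \leq K^B + O(1)$, the two complexities in fact agree up to an additive constant, even though $(A \oplus B)'$ is generally far above $B'$. The point of the proof is that the membership information in $A'$ survives in the \emph{computations} but not in the \emph{complexities}, so it must be extracted in a controlled way. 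My target: produce, uniformly in $e$, strings $\tau_e$ for which $K^{A \oplus B}(\tau_e)$ is below a threshold $t(e)$ (computable from $e$) exactly when $e \in A'$. Transferring through the displayed inequality, ``$e \in A'$'' is then reflected in ``$K^B(\tau_e) \leq t(e)+c$'', which is a $\Sigma^0_1(B)$ condition; this already hands us a Turing reduction $A' \leq_T B'$, and the real content is to make it truth-table.

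For the coding I would build, via the Kraft--Chaitin theorem relative to $A \oplus B$, a bounded request set that reserves a small weight $2^{-r(e)}$ for each $e$, with $r$ computable and $\sum_e 2^{-r(e)}$ bounded, and that issues a request placing $\tau_e$ at complexity about $r(e)$ only once the enumeration witnesses $\Phi_e^{A}(e)\downarrow$; this gives the easy (forward) direction. The delicate half is the \emph{lower} bound for the case $e \notin A'$: no string chosen computably-in-$B$ can be $(A\oplus B)$-incompressible, so $\tau_e$ cannot be a fixed, computably given target. The natural fix is to tie $\tau_e$ to an initial segment of an $\Omega$-type real that is $1$-$(A \oplus B)$-random (for instance $\Omega^B_U$, which is $1$-$(A\oplus B)$-random by the hypothesis and c.e.\ in $B$), since such a segment has $K^{A\oplus B}$ genuinely above $r(e)+c+O(1)$ once $r(e)$ is large, the constants being fixed in advance by the recursion theorem. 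But because such a real is only enumerated, its finite prefixes are revised during the construction, so the requests must be reissued from a capped weight pool, and the whole scheme only works if the number of revisions attached to $e$ is bounded by a computable function of $e$. Assuming this is arranged, I would finish by invoking the standard characterization ``$X \leq_{tt} B'$ iff $X$ is $\omega$-c.e.\ in $B$'', i.e.\ iff $X$ has a $B$-computable approximation with a computably bounded number of mind changes: the verdict on $e$ produced by watching whether $K^B(\tau_e)$ has dropped below $t(e)+c$ changes only when a new short $B$-description appears or the target $\tau_e$ is revised, and the budgeting bounds both computably, so $A' \leq_{tt} B'$.

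The main obstacle is exactly this last requirement --- upgrading the Turing reduction to a truth-table one by keeping the number of mind changes \emph{computably} bounded. This pins two conflicting demands on each $\tau_e$: it must be incompressible enough relative to $A \oplus B$ to make the ``$e \notin A'$'' case detectable, yet approximable from $B$ with only boundedly-many revisions, so that the $(A\oplus B)$-bounded request set remains legal and the induced $B$-approximation to $A'$ has a computable mind-change bound. Reconciling these --- essentially, choosing where to spend a finite reservoir of weight so that the coding of $e$ both fires and does not over-fire --- is the heart of the argument; by contrast, the Kraft--Chaitin bookkeeping, the transfer through $K^B \leq K^{A\oplus B}+c$, and the $\omega$-c.e.\ characterization of $\leq_{tt} B'$ are all routine. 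I note also that the purely measure-theoretic ``covering'' form of $\leq_{LR}$ does not seem to suffice by itself here: a $\Sigma^0_1(B)$ cover of a small $\Sigma^0_1(A \oplus B)$ class is free to include regions the class avoids, so the ``$e \notin A'$'' case cannot be read off a cover, and some two-sided ingredient such as the incompressibility bound above appears to be needed.
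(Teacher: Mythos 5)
First, a point of reference: this article does not actually prove Theorem \ref{thm:tt} --- it is imported from Nies \cite{nies:ta} (see the pointer to Theorem 8.1 there) --- so your proposal has to be judged on its own terms rather than against a proof in the paper. The overall shape you chose (code $A'$ into a bounded request set relative to $A\oplus B$, transfer through the hypothesis, and bound mind changes to land in the $\omega$-c.e.-in-$B$ characterization of $\leq_{tt} B'$) is the right shape, but there are two genuine gaps. The first is your opening ``standard fact'' that $A\oplus B \leq_{LR} B$ yields $K^B(\tau) \leq K^{A\oplus B}(\tau)+c$. This is the passage from $\leq_{LR}$ to low-for-$K$ reducibility, and it is not basic bookkeeping: in the form you need it says that $A$ low for ML-randomness over $B$ implies $A$ low for $K$ over $B$, whose proof (hungry sets plus the golden-run machinery, or the later Kjos-Hanssen--Miller--Solomon argument for general $\leq_{LR}$, which postdates both this paper and \cite{nies:ta}) is at least as deep as the theorem you are trying to prove. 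Treating it as a citable preliminary conceals most of the difficulty.

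Second, the concrete instantiation of the hard half fails arithmetically. If $\tau_e = \Omega^B_U \restrict n_e$, the prefix is known only as the limit of a $B$-computable increasing approximation and can be revised up to $2^{n_e}$ times; so the number of revisions \emph{is} computably bounded, as you require, but the bound is unaffordable: reissuing weight $2^{-r(e)}$ at each revision forces $r(e) \geq n_e + O(1)$ to keep the request set bounded, and then the incompressibility guarantee $K^{A\oplus B}(\Omega^B_U \restrict n_e) \geq n_e - O(1)$ no longer clears the threshold $t(e)+c \geq n_e + O(1)$, so the case $e \notin A'$ is undetectable. This is exactly the tension you yourself flag as ``the heart of the argument,'' and it is left unresolved, so the proposal is not yet a proof. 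I would add that your closing remark has matters backwards: the measure-theoretic covering form of $\leq_{LR}$ \emph{does} suffice and is the standard route. One reserves for each $e$ a pool of small clopen followers, enumerates the current follower into the $\Sigma^{0,A\oplus B}_1$ class only upon seeing $e$ enter $A'$, and moves to a fresh follower whenever the fixed $\Sigma^{0,B}_1$ cover $V$ swallows the current one prematurely; such false positives are paid for out of the measure deficit $1-\mu(V)$, which is precisely what makes the number of mind changes computably bounded and the reduction truth-table.
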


\begin{defn}
If $A' \geq_{tt}0''$ then we say that $A$ is \textbf{super-high}.
\end{defn}

\begin{cor}
\label{soup}
If $Z \leq_T 0'$ is a.e.~dominating then $Z' \equiv_{tt} 0''$ and hence $Z$ is super-high.
\end{cor}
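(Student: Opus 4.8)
The plan is to derive Corollary \ref{soup} by combining Theorem \ref{feb12} with Theorem \ref{thm:tt}. Suppose $Z \leq_T 0'$ is a.e.~dominating. By Theorem \ref{feb12}, every $1$-$Z$-random set is $2$-random, i.e.\ $1$-$0'$-random, which is precisely the statement $0' \leq_{LR} Z$. Now I would like to feed this into Theorem \ref{thm:tt} with the roles $A = 0'$ and $B = Z$; the hypothesis of that theorem is $A \oplus B \leq_{LR} B$, so I need $0' \oplus Z \leq_{LR} Z$.

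The key observation bridging these is that $\leq_{LR}$ is monotone under joins below a fixed set, or more directly: since $Z \leq_T 0'$, we have $0' \oplus Z \equiv_T 0'$, and Turing-equivalent sets have the same $1$-randomness relative to them, so $0' \oplus Z \leq_{LR} 0'$. Combining $0' \oplus Z \leq_{LR} 0'$ with $0' \leq_{LR} Z$ (and the obvious transitivity of $\leq_{LR}$, which holds since the relation ``every $1$-$B$-random is $1$-$A$-random'' composes) yields $0' \oplus Z \leq_{LR} Z$. Then Theorem \ref{thm:tt} gives $(0')' \leq_{tt} Z'$, that is, $0'' \leq_{tt} Z'$. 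For the reverse direction, $Z \leq_T 0'$ implies $Z' \leq_{tt} 0''$ (the jump of a $\Delta^0_2$ set is truth-table below $0''$ by the standard relativized argument, or simply because $Z \leq_{tt} 0'$ here gives $Z' \leq_{tt} 0''$ via the fact that $0'$ already computes $Z$ with a tt-reduction once we fix a $\Delta^0_2$ approximation — more carefully, $Z' \leq_T 0''$ since $Z \leq_T 0'$, and this can be refined to $\leq_{tt}$). Hence $Z' \equiv_{tt} 0''$, so $Z$ is super-high by definition.

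The main obstacle I anticipate is handling the reductions cleanly: specifically, verifying transitivity of $\leq_{LR}$ (immediate from the definition) and, more delicately, nailing down that $Z \leq_T 0'$ actually yields $Z' \leq_{tt} 0''$ rather than just $Z' \leq_T 0''$. This is a known fact — any $\Delta^0_2$ set has a truth-table approximation making its jump truth-table reducible to $0''$ — but it deserves a one-line citation or remark rather than a detailed argument. Everything else is a direct chaining of Theorem \ref{feb12}, Theorem \ref{thm:tt}, and elementary properties of $\leq_{LR}$ and $\leq_{tt}$.
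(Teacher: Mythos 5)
Your proposal is correct and follows essentially the same route as the paper: Theorem \ref{feb12} gives $0' \leq_{LR} Z$, which combined with $0' \oplus Z \leq_T 0'$ yields $0' \oplus Z \leq_{LR} Z$, and then Theorem \ref{thm:tt} gives $0'' \leq_{tt} Z'$. The converse direction you worry about is handled in the paper by the standard Jump Theorem, which gives the even stronger $Z' \leq_1 0''$ from $Z \leq_T 0'$, so no separate argument is needed there.
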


\begin{proof}
Let $Z \leq_T 0'$ be a.e.~dominating.  Since $Z\le_T 0'$, we have $Z'\le_{tt}0''$ (in fact even $Z'\le_1 0''$, see for example \cite{soa:book}).  On the other hand, since $Z$ is a.e.~dominating, $0' \leq_{LR} Z$ by Theorem \ref{feb12}.  
Combining this reduction with $0' \oplus Z \leq_T 0'$ gives $0' \oplus Z \leq_{LR} Z$.  By Theorem \ref{thm:tt}, $0'' \leq_{tt} Z'$, so 
we conclude $0'' \equiv_{tt} Z'$.  
\end{proof}

Using Corollary \ref{soup}, we can give an alternate proof for Theorem \ref{thm:cehigh} using index sets.  Let 
$H_T = \{ x \, | \, 0'' \leq_T W_x' \}$ be the index set for high c.e.~sets and $H_{tt} = \{ x \, | \, 0'' \leq_{tt} W_x' \}$ be 
the index set for super-high c.e.~sets.  A proof of the following theorem can be found in Soare \cite{soa:book}.  

\begin{thm}[Schwarz \cite{schwarz}]
\label{thm:T}
$H_T$ is $\Sigma^0_5$ complete.
\end{thm}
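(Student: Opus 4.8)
The plan is to prove separately that $H_T \in \Sigma^0_5$ and that $H_T$ is $\Sigma^0_5$-hard. The upper bound is a quantifier count. Observe that $x \in H_T$ iff $\exists e \, \forall n \, [\Phi_e^{W_x'}(n)\downarrow = 0''(n)]$. Since $W_x$ is c.e., its jump $W_x'$ is a $\Sigma^0_2$ set, uniformly in $x$, so the predicate ``$\sigma$ is an initial segment of $W_x'$'' is a finite conjunction of $\Sigma^0_2$ and $\Pi^0_2$ conditions and hence $\Delta^0_3$, uniformly in $x$ and $\sigma$. Consequently ``$\Phi_e^{W_x'}(n)\downarrow = v$'' — asserting that for some correct initial segment $\sigma$ of $W_x'$ we have $\Phi_e^{\sigma}(n)\downarrow = v$ with use at most $|\sigma|$ — is $\Sigma^0_3$, and so is ``$\Phi_e^{W_x'}(n) = 0''(n)$'' (the disjunction over $v \in \{0,1\}$ of the conjunction of ``$\Phi_e^{W_x'}(n)\downarrow = v$'' with ``$0''(n) = v$''). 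Prefixing $\forall n$ yields a $\Pi^0_4$ matrix, and prefixing $\exists e$ gives $\Sigma^0_5$.

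For hardness, fix an arbitrary $\Sigma^0_5$ set $A$; the goal is a computable $f$ with $n \in A \iff W_{f(n)}$ high. First write $n \in A \iff \exists i \, P(n,i)$ with $P$ a $\Pi^0_4$ predicate and, using that $\{ e : \Phi_e^{0''} \text{ total} \}$ is $\Pi^0_4$-complete, take $P(n,i) \iff \Theta_{n,i}^{0''} \text{ total}$ for an index $\Theta_{n,i}$ computable from $n$ and $i$. Then build $W = W_{f(n)}$ uniformly by a priority construction on a tree of strategies, using a layered form of the ``$\text{Tot}$-trick'' of Section \ref{sec:ce}. The set $W$ is split into blocks indexed by the potential witnesses $i$. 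Block $i$ runs its own copy of the $\text{Tot}$-trick, treating totality of $\Theta_{n,i}^{0''}$ the way the construction of Section \ref{sec:ce} treats totality of $\Phi_e$, and enumerates its columns so as to code the ($\Sigma^0_2$, hence $0'$-enumerable) set $0''$ into $W'$: the $m$-th column of block $i$ is made cofinite exactly when $m$ is believed to lie in $0''$. Since block $i$ is itself a c.e.\ construction it has only a $0'$-approximation to $0''$, so inside each block sit substrategies guessing that approximation; above them sit strategies guessing, for each $i$, whether $\Theta_{n,i}^{0''}$ is total; and the outermost layer selects the least $i$ (if any) that works. Strategies to the left of the true path act only finitely often, and the restraint imposed by the negative requirements below secures preservation past the true path.

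If $n \in A$, then the least $i$ with $\Theta_{n,i}^{0''}$ total eventually governs the construction, in the limit computes $0''$ correctly, and records it faithfully in the cofinite/finite dichotomy of block $i$'s columns, so $0'' \leq_T W'$ by the Limit Lemma and $W$ is high. If $n \notin A$, then no block succeeds, and one must additionally meet the negative requirements $N_c$: $\Phi_c^{W'} \neq \chi_{0''}$, lest $W$ be high by accident; $N_c$ picks a witness $m$, preserves the apparent $W'$-computation $\Phi_c^{W'}(m)$, and diagonalizes once it may — which it can since $0''$ is not uniformly $W'$-computable — and this preservation is reconciled with the blocks' column enumeration through the tree ordering. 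The hard part will be exactly this reconciliation, together with verifying faithfulness of the coding precisely when $n \in A$: one must show that the $0''$-coding carried out by the eventually dominant block is corrupted neither by the finitely many higher-priority injuries nor by that block's own $0'$-guessing substrategies (which is what forces the rigid column structure and the cofinite/finite dichotomy to be maintained exactly), and, when $n \notin A$, that the negative requirements genuinely succeed despite being injured along the true path — an injury pattern which is itself only $0^{(4)}$-computable. This last point cannot be avoided: if $W_{f(n)}$ could be made low, or even $\mathrm{low}_2$, whenever $n \notin A$, then $\overline{A}$ would be $\Sigma^0_5$ and hence $A$ would be $\Pi^0_5$, contrary to the assumption that $A$ may be taken properly $\Sigma^0_5$. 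Carrying out this balancing act and proving the supporting true-path lemmas is the heart of the argument.
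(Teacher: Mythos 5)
First, note that the paper does not prove this statement at all: it is quoted as Schwarz's theorem, with the remark that a proof can be found in Soare \cite{soa:book}. So the relevant question is whether your argument would stand on its own. The upper bound does: your quantifier count is correct. Writing $x\in H_T$ as $\exists e\,\forall n\,[\Phi_e(W_x';n)\downarrow=0''(n)]$, using that initial segments of $W_x'$ are uniformly $\Delta^0_3$, gives a $\Sigma^0_3$ matrix, hence $\Pi^0_4$ after $\forall n$ and $\Sigma^0_5$ after $\exists e$. Your meta-observation that the non-high sets produced in the failure case cannot uniformly be made $\mathrm{low}_2$ is also correct and is a genuine (known) obstruction that any proof must respect.

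The gap is in the hardness direction, which is the entire content of the theorem. What you have written there is a proof plan, not a proof: you name the reduction ($n\in A\Leftrightarrow\exists i\,[\Theta_{n,i}(0'')\ \text{total}]$, using $\Pi^0_4$-completeness of relativized $\mathrm{Tot}$), describe the intended architecture (blocks indexed by $i$, a layered $\mathrm{Tot}$-trick coding $0''$ into $W'$, negative requirements $N_c:\Phi_c(W')\neq\chi_{0''}$, a tree of strategies), and then explicitly defer the reconciliation of the coding with the negative requirements, calling it ``the heart of the argument.'' That reconciliation is precisely Schwarz's theorem. Two points in particular are not addressed and are where such constructions actually live: (i) a c.e.\ construction has no direct access to $W'$, so ``preserving the apparent $W'$-computation $\Phi_c(W';m)$'' requires an explicit mechanism (true stages, thickness/hat-trick machinery, or Martin's domination characterization of highness) for converting restraint on $W$ into control of $W'$ --- this is a $0'''$-level priority argument, not a routine tree argument; and (ii) the coding block must write a $0'$-approximated guess at the $\Sigma^0_2$ set $0''$ into the finite/cofinite dichotomy of its columns while being injured by exactly those negative strategies, and you give no argument that both can succeed simultaneously along the true path. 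Until the construction and its true-path lemmas are actually carried out, the hardness half remains unproven; as it stands, citing Schwarz (as the paper does) is doing all the work.
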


\begin{lem}
\label{lem:tt}
$H_{tt}$ is a $\Sigma^0_4$ set.
\end{lem}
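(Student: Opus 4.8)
The plan is to express the predicate $0''\leq_{tt}W_x'$ as ``$\exists e$ such that ($e$ codes a total truth-table functional) and ($\Phi_e^{W_x'}=0''$)'' and to check that, after the leading $\exists e$, what remains is $\Pi^0_3$. Fix the standard coding (as in Soare \cite{soa:book}) in which a truth-table reduction is an index $e$ for a computable function $\varphi_e$ that is total and whose value $\varphi_e(n)=\langle D_n,\alpha_n\rangle$ always codes a truth-table condition: a finite query set $D_n$ together with a truth table $\alpha_n\colon 2^{D_n}\to\{0,1\}$, the functional sending an oracle $Z$ to $\{n:\alpha_n(Z\restrict D_n)=1\}$. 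Thus
\[
 x\in H_{tt}\iff \exists e\,\Big[\,\tau(e)\ \wedge\ \forall n\big(n\in 0''\leftrightarrow\alpha_n(W_x'\restrict D_n)=1\big)\,\Big],
\]
where $\tau(e)$ abbreviates ``$\varphi_e$ is total and all its values code truth-table conditions.''

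First I would note that $\tau(e)$ is $\Pi^0_2$, and that since $W_x$ is c.e.\ uniformly in $x$, the predicate ``$q\in W_x'$'' of $(x,q)$ is uniformly $\Sigma^0_2$ (write $W_x'=\{q:\Phi_q^{W_x}(q){\downarrow}\}$ and unfold). Consequently, for any fixed finite $D$ and truth table $\alpha$, ``$\alpha(W_x'\restrict D)=1$'' is a finite Boolean combination of $\Sigma^0_2$ predicates, hence $\Delta^0_3$, uniformly in $x,D,\alpha$. The only real work is to upgrade this to a uniform $\Delta^0_3$ bound for ``$\alpha_n(W_x'\restrict D_n)=1$'', where now $\langle D_n,\alpha_n\rangle$ must be read off from $\varphi_e(n)$; done naively this introduces an existential over stages which, sitting inside the $\forall n$ and under the outer $\exists e$, would inflate the bound to $\Sigma^0_5$. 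The fix is to use totality of $\varphi_e$: assuming $\tau(e)$, the value $\varphi_e(n)$ is unique and attained, so ``$\alpha_n(W_x'\restrict D_n)=1$'' is equivalent both to $\forall s\,\forall D\,\forall\alpha\,[\varphi_e(n){\downarrow}=\langle D,\alpha\rangle$ within $s$ steps $\to\alpha(W_x'\restrict D)=1]$ and to the analogous statement with $\exists$'s and $\wedge$. Since ``$\varphi_e(n){\downarrow}=\langle D,\alpha\rangle$ within $s$ steps'' is decidable and ``$\alpha(W_x'\restrict D)=1$'' is $\Delta^0_3$ uniformly, these give a $\Pi^0_3$ form and a $\Sigma^0_3$ form respectively, so ``$\alpha_n(W_x'\restrict D_n)=1$'' is $\Delta^0_3$ in $(x,e,n)$, and likewise ``$\alpha_n(W_x'\restrict D_n)=0$''.

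Assembling: under $\tau(e)$, the biconditional ``$n\in 0''\leftrightarrow\alpha_n(W_x'\restrict D_n)=1$'' is a Boolean combination of $\Delta^0_3$ predicates (using $\Sigma^0_2\subseteq\Delta^0_3$), hence $\Delta^0_3$; quantifying $\forall n$ gives $\Pi^0_3$; conjoining the $\Pi^0_2$ predicate $\tau(e)$ is still $\Pi^0_3$; and the outer $\exists e$ yields $\Sigma^0_4$, so $H_{tt}\in\Sigma^0_4$. The main obstacle, as flagged, is exactly the one step of obtaining a uniform $\Delta^0_3$ bound for the truth-table evaluation ``$\alpha_n(W_x'\restrict D_n)=1$'': one has to absorb the existential that extracts the value $\varphi_e(n)$ by exploiting totality, or else the count degrades by a level. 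Everything else is routine arithmetic-hierarchy bookkeeping, together with the fact that the jump of a uniformly c.e.\ family of sets is uniformly $\Sigma^0_2$.
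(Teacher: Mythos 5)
Your proof is correct and takes essentially the same route as the paper's: express $0'' \leq_{tt} W_x'$ as $\exists e\,\forall n\,(\text{the } e\text{-th tt-reduction is defined and correct at } n)$ and count quantifiers, using that membership in $W_x'$ is uniformly $\Sigma^0_2$ and $0''$ is $\Sigma^0_2$. The paper shortcuts the bookkeeping by doing the count relative to $0''$ (the matrix is recursive in $0''$, so the whole predicate is $\Sigma_2$ in $0''$, hence $\Sigma^0_4$ by Post's theorem), which sidesteps the uniformity issue about extracting the truth-table condition from $\varphi_e(n)$ that you correctly handle by hand.
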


\begin{proof}
Let $\sigma_n$, $n \in \omega$ be a list of the well-formed formulas of sentential logic with sentential letters
$\mathbf{A_n}$, $n \geq 1$. Let $B$ be a set and let $v$ be a truth assignment such that $v(\mathbf{A_n})=T$ (true) iff
$n\in B$. Let $\overline v$ be the extension of $v$ to all well-formed formulas. Write $B \models \sigma_n$ if
$\overline v(\sigma_n)=T$. Then $A \leq_{tt} B$ iff there is a computable function $f$ such that for all $x$, $x \in
A$ iff $B \models \sigma_{f(x)}$. Hence 
$0'' \leq_{tt} W_x' \, \Leftrightarrow \, (\exists e)(\forall x)R(e,x)$ where
$$R(e,x) \, \Leftrightarrow \Phi_e(x)\downarrow\And [x\in 0''\iff W_x'\models\sigma_{\Phi_e(x)}].$$
So $H_{tt}$ is $\Sigma^0_2$ in $0''$, or in other words it is $\Sigma^0_4$.
\end{proof}

\begin{cor}
\label{superhigh}
There exists a high, not super-high c.e.~set.
\end{cor}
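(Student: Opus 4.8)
The plan is to separate the two index sets $H_T$ and $H_{tt}$ by a complexity argument. By Theorem \ref{thm:T}, $H_T$ is $\Sigma^0_5$ complete, while Lemma \ref{lem:tt} shows $H_{tt}$ is only $\Sigma^0_4$. If every high c.e.\ set were super-high, then for a c.e.\ set $W_x$ we would have $x \in H_T \Leftrightarrow x \in H_{tt}$, which would make $H_T$ a $\Sigma^0_4$ set, contradicting its $\Sigma^0_5$ completeness (since $\Sigma^0_4 \neq \Sigma^0_5$). Hence there must be a high c.e.\ set that is not super-high.

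In more detail, I would first observe that super-high always implies high: if $0'' \leq_{tt} W_x'$ then in particular $0'' \leq_T W_x'$, so $H_{tt} \subseteq H_T$. Then I would argue by contradiction: suppose no high c.e.\ set is super-high, i.e.\ $H_T \subseteq H_{tt}$ as well, so $H_T = H_{tt}$. By Lemma \ref{lem:tt}, $H_{tt} \in \Sigma^0_4$, hence $H_T \in \Sigma^0_4$. But $\Sigma^0_4 \subsetneq \Sigma^0_5$ (the arithmetical hierarchy does not collapse at this level), so a $\Sigma^0_5$-complete set cannot be $\Sigma^0_4$, contradicting Theorem \ref{thm:T}. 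Therefore $H_T \neq H_{tt}$, and since $H_{tt} \subseteq H_T$, there is some $x \in H_T \setminus H_{tt}$; the c.e.\ set $W_x$ is then high but not super-high.

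The only real content here is the input lemmas, which are already established: the computation of the complexity of $H_{tt}$ (Lemma \ref{lem:tt}) and Schwarz's theorem on the complexity of $H_T$ (Theorem \ref{thm:T}). The remaining step — deducing non-collapse — is routine, using only the fact that the arithmetical hierarchy is proper. I do not anticipate any genuine obstacle; the main point to be careful about is simply to record that super-high implies high so that the containment $H_{tt} \subseteq H_T$ is available and the separation yields an element on the correct side.
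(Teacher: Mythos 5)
Your argument is correct and is essentially identical to the paper's proof: both deduce from Theorem \ref{thm:T} and Lemma \ref{lem:tt} that $H_T$ cannot equal the $\Sigma^0_4$ set $H_{tt}$, note the obvious inclusion $H_{tt}\subseteq H_T$, and extract an index $x\in H_T - H_{tt}$.
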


\begin{proof}
By Theorem \ref{thm:T}, $H_T$ is not a $\Sigma^0_4$ set, so
$H_{tt} \neq H_T$. As clearly $H_{tt}\subseteq H_T$, we conclude
that $H_{tt} \subsetneq H_T$. Let $x \in H_T - H_{tt}$; then $W_x$ is
high but not super-high.
\end{proof}

\begin{cor}
There is a high c.e.~set which is not a.e. dominating.
\end{cor}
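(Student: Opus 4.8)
The plan is to derive this corollary immediately from the two preceding corollaries in the section. By Corollary \ref{superhigh}, there is a c.e.\ set $W_x$ which is high but not super-high. I claim this same set cannot be a.e.~dominating. Indeed, since $W_x$ is c.e., we have $W_x \leq_T 0'$, so Corollary \ref{soup} applies: if $W_x$ were a.e.~dominating, then $W_x' \equiv_{tt} 0''$, in particular $0'' \leq_{tt} W_x'$, which is exactly the statement that $W_x$ is super-high. This contradicts the choice of $W_x$, so $W_x$ is not a.e.~dominating.

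The proof is therefore just two sentences combining Corollary \ref{superhigh} (a high, not super-high c.e.\ set exists) with Corollary \ref{soup} (a.e.~dominating plus $\leq_T 0'$ implies super-high). I would write it as follows:

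\begin{proof}
By Corollary \ref{superhigh}, there is a c.e.~set $W$ which is high but not super-high.  Since $W$ is c.e., $W \leq_T 0'$, so by Corollary \ref{soup}, if $W$ were a.e.~dominating it would be super-high.  As $W$ is not super-high, $W$ is not a.e.~dominating.  Thus $W$ is a high c.e.~set which is not a.e.~dominating.
\end{proof}

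There is no real obstacle here; the corollary is a packaging of results already established in this section (and it reproves Theorem \ref{thm:cehigh} via index sets, as promised in the introduction and at the start of Section \ref{sec:ce}). The only thing to be careful about is invoking Corollary \ref{soup} in the correct direction: it gives super-highness as a \emph{consequence} of a.e.~domination for sets below $0'$, so the contrapositive (not super-high implies not a.e.~dominating, for c.e.\ sets) is what we use.
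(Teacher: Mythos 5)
Your proof is correct and is exactly the paper's argument: the paper also derives this corollary immediately from Corollaries \ref{soup} and \ref{superhigh}, using the contrapositive of Corollary \ref{soup} just as you do. No issues.
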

\begin{proof}
Immediate from Corollaries \ref{soup} and \ref{superhigh}.
\end{proof}

\section{High-above ideals}
\label{sec:ideals}

\begin{defn}
An \textbf{ideal} is a set $\mathcal{I} \subseteq 2^{\omega}$ such that if $X \in \mathcal{I}$ and $Y \leq_T X$, then $Y \in \mathcal{I}$ and 
if $X, Y \in \mathcal{I}$, then $X \oplus Y \in \mathcal{I}$.  An ideal $\mathcal{I}$ is called a \textbf{high-above ideal} if 
\[
(\forall A \in \mathcal{I}) ( \exists B \in \mathcal{I}) ( A <_T B \, \text{and} \, 
A'' \leq_T B').
\]
\end{defn}

A trivial example of a high-above ideal is $\{ A \, | \, (\exists n)( A \leq_T 0^{(n)}) \}$.  
In this section, we construct two more examples of high-above ideals.  In Proposition 
\ref{prop:nocomp}, we use Mathias forcing to show that for any infinite computable tree 
$T \subseteq 2^{< \omega}$ with no computable path, there is a high-above ideal which does not contain a path through $T$.  Such an ideal 
is not the second order part of an $\omega$-model of 
$\text{WKL}_0$.  (To see a very different application of Mathias forcing 
in recursion theory, the reader is referred to Cholak, Jockusch and Slaman \cite{cho:01}.)  In Proposition \ref{prop:noae}, we use the 
fact that 2-random sets are not a.e.~dominating to construct a high-above ideal which does not contain an a.e.~dominating set.

We begin with a relativized version of Martin's characterization of high degrees in terms of 
dominating functions.  The proof in Chapter XI of Soare \cite{soa:book} relativizes to give the following theorem.  

\begin{thm}[Martin \cite{mar:66}, relativized]
\label{marel}
For any sets $A$, $B$, we have $(A \oplus B)' \geq_T B''$ iff 
there is a single function computable in $A\oplus B$ which dominates all functions computable in $B$.
\end{thm}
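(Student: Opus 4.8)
The plan is to prove this as a two-directional relativization of Martin's characterization of highness by dominating functions, so I would essentially relativize the standard argument (the one in Chapter XI of Soare) by inserting the oracle $B$ throughout; below I record the shape of the argument and the one place the extra oracle $A$ actually does any work. Write $C = A\oplus B$ and $\text{Tot}^B = \{e : \Phi_e^B \text{ is total}\}$, so that $B'' \equiv_T \text{Tot}^B$. I will use the relativized Limit Lemma in the form: a set $X$ satisfies $X \le_T C'$ iff there is a $C$-computable double sequence $h(n,s)$ with $X(n) = \lim_s h(n,s)$ for all $n$.

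For the direction from domination to highness, suppose $f \le_T C$ dominates every $B$-computable function. I would introduce the $C$-computable predicate $M(e,n)$, defined to hold iff $\Phi^B_{e,f(n)}(i)\downarrow$ for every $i\le n$ (to evaluate it, compute $f(n)$ from $C$, then run the finitely many bounded computations, which only need $B\le_T C$). The key claim is $\lim_n M(e,n) = \chi_{\text{Tot}^B}(e)$. If $\Phi_e^B$ is total, then the monotonized modulus $\widehat\mu_e(n) = \max_{i\le n}\,\mu s[\Phi^B_{e,s}(i)\downarrow]$ is a total $B$-computable function, hence dominated by $f$, so for all large $n$ and all $i\le n$ we have $f(n) > \widehat\mu_e(n) \ge \mu s[\Phi^B_{e,s}(i)\downarrow]$, i.e. $M(e,n)$ holds; if instead $\Phi_e^B(k)\uparrow$, then $M(e,n)$ fails for every $n\ge k$. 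By the Limit Lemma this gives $\text{Tot}^B \le_T C'$, hence $(A\oplus B)' \ge_T B''$.

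For the converse, suppose $(A\oplus B)' \ge_T B''$, so $\text{Tot}^B \le_T C'$ and the Limit Lemma supplies a $C$-computable $g$ with $\lim_s g(e,s) = \chi_{\text{Tot}^B}(e)$. I would then set $\ell(e,n)$ to be the least $s\ge n$ such that $\Phi^B_{e,s}(n)\downarrow$ or $g(e,s)=0$. This search always halts: if $\Phi_e^B$ is total some such computation converges, and otherwise $g(e,\cdot)$ is eventually $0$; so $\ell \le_T C$ is total. Putting $F(e,n) = \Phi^B_{e,\ell(e,n)}(n)$ when that converges and $0$ otherwise, the function $f(n) = 1 + \max_{e\le n}\bigl(\ell(e,n)+F(e,n)\bigr)$ is $C$-computable and total. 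If $\Phi^B_{e_0}$ is total, then $g(e_0,s)=1$ for all sufficiently large $s$, so for all large $n$ the clause $g(e_0,s)=0$ never fires while computing $\ell(e_0,n)$, which forces $F(e_0,n) = \Phi^B_{e_0}(n)$ and hence $f(n) > \Phi^B_{e_0}(n)$; thus $f$ dominates every $B$-computable function.

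The only genuinely delicate step is the totality argument for $\ell$ in the converse: one must use that the guessing function $g$, although it need not be correct on any particular index at any particular stage, is correct in the limit on exactly the non-total indices, and that is what keeps each search bounded. Everything else is routine bookkeeping, and the oracle $A$ enters only through the single facts $f\le_T A\oplus B$ in the first direction and $g\le_T A\oplus B$ in the second.
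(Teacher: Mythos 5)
Your proof is correct and is exactly the route the paper intends: the paper gives no argument beyond noting that the proof in Chapter XI of Soare relativizes, and your write-up is a faithful, correct relativization of that standard argument (limit-lemma guessing at $\mathrm{Tot}^B$ via a dominating function in one direction, and the $\ell(e,n)$ search cut off by the limit approximation in the other). Nothing further is needed.
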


\begin{defn}
Let $H$ be any set. An $\mathbf{H}$\textbf{-computable Mathias condition} is a pair
$P=(F,C)$ where $F$ is a finite subset of $\omega$ and $C$ is an infinite $H$-computable set with
$\max(F)<\min(C)$. We say that $P_1$ \textbf{extends} $P_2$ if $F_2 \sqsubseteq F_1\subseteq F_2\cup C_2$ and  
$C_1\subseteq C_2$.  We say that a set $G$ \textbf{extends} a condition $P$ if $F \sqsubseteq G$ 
and $G \subseteq F \cup C$.
\end{defn}

We view $H$-computable Mathias conditions 
as pairs $(e,i)$ where $e$ is a canonical index for the finite set $F$ and $i$ is an index such that $C = W_i^H$.  Using this 
notation, the set of $H$-computable Mathias conditions is $\Sigma_3^H$.  Furthermore, if $(F_1, C_1)$ and $(F_2, C_2)$ 
are conditions, then the statement that $(F_1,C_1)$ extends $(F_2,C_2)$ is $\Pi_2^H$.  Therefore, when discussing 
$H$-computable Mathias forcing, we will not discuss objects which are less that 3-generic, since merely describing the forcing 
conditions and their relationships requires statements which are $\Sigma_3^H$.

\begin{lem}
\label{lem:tree}
Let $T$ be an infinite computable subtree of $2^{< \omega}$ and let $A$ be a set such that $A$ does not compute any path through $T$.  If $G$ is 
3-$A$-generic for $A$-computable Mathias forcing, then $G \oplus A$ does not compute a path through $T$.
\end{lem}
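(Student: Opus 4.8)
The plan is to argue by contradiction: suppose $G$ is $3$-$A$-generic for $A$-computable Mathias forcing and $\Phi_e(G \oplus A)$ is a path through $T$ for some index $e$. I want to build, from $A$ alone, a path through $T$, contradicting the hypothesis on $A$. The key observation is that along a sufficiently generic $G$, we can use genericity to push the computation of $\Phi_e(G \oplus A)$ forward, and the "reservoir" part $C$ of each Mathias condition — which is $A$-computable — gives us enough control to simulate the generic internally using only $A$. So first I would set up, for each $n$, the set $D_n$ of conditions $(F,C)$ that force $\Phi_e((F') \oplus A) \restriction n \downarrow$ and lies on $T$, for every $F'$ with $F \sqsubseteq F' \subseteq F \cup C$ that is "closed under $C$ up to the use" — more precisely, the set of conditions deciding $\Phi_e(G \oplus A)\restriction n$ in a way that is stable under all extensions inside the condition. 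I would check that these sets are dense (given any condition, since $G \oplus A$ is assumed to compute a total path we can find a finite extension converging, then thin $C$ to absorb the use) and that membership in $D_n$ is $\Sigma^{A}_3$ (this uses the remark in the excerpt that the conditions and the extension relation are $\Sigma^A_3$ / $\Pi^A_2$, so a bounded search over finite extensions plus the convergence statement stays within $\Sigma^A_3$). Then $3$-$A$-genericity gives that $G$ meets each $D_n$.

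Next I would extract the path. Let $P_n = (F_n, C_n)$ be a condition met by $G$ lying in $D_n$; without loss of generality the $P_n$ form a decreasing sequence (we can interleave with the genericity meeting-or-avoiding to get a decreasing sequence of met conditions, or simply note that any two met conditions are compatible and take a common extension met by $G$, which still lies in $D_n$ by density below). The point is that once $P_n$ is in $D_n$, the value $\Phi_e(G \oplus A)\restriction n$ equals the value forced by $P_n$, which is a string on $T$, and crucially this value is computed using only $F_n$, $C_n$ (up to the use) and $A$ — no further information about $G$ is needed. So define $h(n)$ to be the string on $T$ so forced. The catch is recovering the $P_n$'s recursively in $A$: a priori finding a condition in $D_n$ that $G$ actually meets requires knowing $G$. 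To avoid this, I would instead work with the fact that $D_n$ is $\Sigma^A_3$ and dense, so $0^{(3)} \oplus A \geq_T A'''$ can find a descending sequence of conditions $Q_n \in D_n$ with $Q_{n+1}$ extending $Q_n$, entirely independent of $G$; the forced values along this sequence cohere (since compatible conditions in $D_n$, $D_m$ force compatible strings — both are initial segments of the unique path $\Phi_e$ would compute along the generic filter through those conditions) and their union is a path through $T$. But this only shows $A'''$ computes a path, not $A$.

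The way around this — and I expect this to be the main obstacle — is to observe we do not need a genuinely generic filter: we only need a filter meeting each $D_n$, and since the $D_n$ are dense, we can build such a filter by a greedy $A$-computable-enough procedure provided we can decide $D_n$-membership. Here I would exploit that the relevant forcing fact — "$(F,C)$ forces $\Phi_e(G\oplus A)\restriction n$ to be a fixed string on $T$" — once we fix the finite data $F$ and a finite initial segment of $C$ long enough to cover the use, becomes a $\emptyset$-computable (in fact primitive recursive) question about the computation, plus an $A$-computable question about whether that initial segment really is an initial segment of $C = W_i^A$. So the correct move is: use $3$-$A$-genericity of $G$ to guarantee the filter generated by $G$ meets every $D_n$ (this is where genericity is essential and where we cannot lower the genericity requirement, matching the remark in the excerpt), but then observe that the values $h(n)$ forced by the conditions $G$ meets are determined by finitary, $A$-computable data, so $h \leq_T A$. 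Then $h$ is (a branch-function description of) an infinite path through $T$ computable from $A$, contradicting the hypothesis. I would close by spelling out the coherence argument: if $P_n \in D_n$ and $P_m \in D_m$ are both met by $G$ then they are compatible, a common extension met by $G$ forces values $\restriction n$ and $\restriction m$ agreeing with each, so the $h(n)$ cohere and $\bigcup_n h(n)$ is a single element of $[T]$ computable in $A$.

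The single hardest point to get right is the density of $D_n$ together with its $\Sigma^A_3$ complexity: density needs that a convergent finite extension exists below every condition (which follows because $\Phi_e(G\oplus A)$ is assumed total for the generic $G$, so by genericity no condition can force divergence), and one must be careful that the "thin $C$ to absorb the use" step keeps us with a legitimate $A$-computable infinite reservoir — this is fine since an infinite $A$-computable set has infinitely many infinite $A$-computable subsets, and we only delete finitely much. The complexity bookkeeping is exactly the $\Sigma^A_3$ bound flagged in the paragraph preceding the lemma, so no genericity below $3$ suffices, which is why the statement asks for $3$-$A$-genericity.
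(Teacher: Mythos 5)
Your overall strategy matches the paper's: assume $\Phi_e(G\oplus A)\in[T]$ and extract from this a path through $T$ computable in $A$ alone. But the extraction step, which you correctly identify as ``the main obstacle,'' is where your proposal has a genuine gap. Your resolution is the claim that ``the values $h(n)$ forced by the conditions $G$ meets are determined by finitary, $A$-computable data, so $h\le_T A$.'' This is a non sequitur: being determined by finitary data does not make $h$ computable from $A$, because there is no $A$-effective way to \emph{find} that data. Locating a condition in $D_n$ that $G$ actually meets requires $G$; locating \emph{some} condition in $D_n$ requires deciding a $\Sigma^A_3$ predicate (your $D_n$, with its stability clause ``stable under all extensions inside the condition,'' is at least $\Pi^A_1$ to verify even for a fixed condition, so your greedy procedure cannot ``decide $D_n$-membership'' from $A$). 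A related slip: $D_n$ is not dense in the whole partial order, and it is not true that ``no condition can force divergence'' -- conditions incompatible with $G$ may well force divergence; density only holds below a suitable condition extended by $G$.

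The fix, which is the paper's key move, is to stop tracking the conditions $G$ meets and instead fix a \emph{single} condition $(F,C)$ extended by $G$ that forces both totality of $\Phi_e(G\oplus A)$ and membership in $[T]$ (such a condition exists because the sets of conditions forcing the negations are $\Sigma^A_3$, so $3$-$A$-genericity applies -- this is the only place genericity is used). Forcing totality means: every finite extension of $(F,C)$ inside the condition has a further \emph{finite} extension on which $\Phi_e(\cdot\oplus A;n)$ converges, and searching for such a finite extension is a $\Sigma^A_1$ search guaranteed to terminate. So one greedily builds a set $\hat G\le_T A$ inside $(F,C)$ with $\Phi_e(\hat G\oplus A)$ total; $\hat G$ need not be generic, need not equal $G$, and need not meet any $D_n$ in your stability sense. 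Since $(F,C)$ forces $\Phi_e(G\oplus A)\in[T]$, every convergent initial segment computed from any extension within $(F,C)$ lies in $T$, so $\Phi_e(\hat G\oplus A)\in[T]$ with $\hat G\oplus A\le_T A$, the desired contradiction. Your proposal never isolates this ``one condition suffices, then run an $A$-effective pseudo-generic search inside it'' step, and without it the argument does not go through.
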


\begin{proof}
We begin by defining what it means for a Mathias condition $(F,C)$ to force various statements.  We say 
$(F,C) \Vdash \Phi_e(G \oplus A;n) \downarrow$ if $\exists s (\Phi_{e,s}(F \oplus A;n) \downarrow)$ and  
\[
(F,C) \Vdash \Phi_e(G \oplus A;n) \uparrow \, \Leftrightarrow \, \forall \, \text{finite} \, \hat{F} \, \forall s \, ( F \sqsubseteq \hat{F} \subseteq F \cup C 
\rightarrow \Phi_{e,s}(\hat{F} \oplus A;n) \uparrow ).
\]
The offset statement is equivalent to saying that no extension of $(F,C)$ forces $\Phi_e(G \oplus A;n) \downarrow$.  
Given a condition $(F,C)$, this statement is $\Pi^A_1$ and the statement that says $(F,C)$ forces $\Phi_e(G \oplus A;n) \uparrow$ 
for some $n$ is $\Sigma_2^A$.  Therefore, the set $S_e$ of all conditions $(F,C)$ for which 
$\exists n [ (F,C) \Vdash \Phi_e(G \oplus A;n) \uparrow ]$ is $\Sigma_3^{A}$.    

Assume that $G$ is 3-$A$-generic for $A$-computable Mathias forcing and that $\Phi_e(G \oplus A)$ is total.  
Because the set $S_e$ of conditions defined above is $\Sigma_3^A$, there must be a condition $(F,C)$ such that $G$ extends 
$(F,C)$ and $(F,C)$ has no extension in $S_e$.  We say that such a condition forces $\Phi_e(G \oplus A)$ to be total.  

There are two important features of conditions $(F,C)$ which force $\Phi_e(G \oplus A)$ to be total.  First, for every 
$n$ and every $(F',C')$ extending $(F,C)$, there is a condition $(F'',C'')$ extending $(F',C')$ which forces $\Phi_e(G \oplus A;n)$ 
to converge.  Second, we can take the condition $(F'',C'')$ to be a finite modification of $(F',C')$.  That is, we can add a finite number of 
elements of $C'$ to $F'$ to get $F''$ and subtract a finite number of elements from $C'$ to get $C''$.  In particular, if $(F,C)$ 
forces $\Phi_e(G \oplus A)$ to be total, then there is a set $\hat{G} \leq_T A$ for which $\Phi_e(\hat{G} \oplus A)$ is total.  
We construct $\hat{G}$ by starting with $(F_0,C_0) = (F,C)$ and choosing conditions $(F_n,C_n)$ such that $(F_{n+1},C_{n+1})$ is a finite modification 
of $(F_n,C_n)$ which extends $(F_n,C_n)$ and which forces $\Phi_e(G \oplus A;n)$ to converge.  These choices can be 
made using only the oracle $A$ since $A$ can compute $C$ and $A$ allows us to search for convergent computations of the 
form $\Phi_e(\hat{F} \oplus A;n)$ for finite extensions $\hat{F}$ of $F_n$.  The set $\hat{G} = \cup_n F_n$ clearly satisfies 
$\Phi_e(\hat{G} \oplus A)$ is total.  

Next, we consider conditions which force $\Phi_e(G \oplus A)$ to not compute a path in $T$.  Let $[T]$ denote the set of paths 
in $T$.  We say $(F,C) \Vdash \Phi_e(G \oplus A) \not \in [T] \, \Leftrightarrow$ 
\[
\exists n \, ( (F,C) \Vdash \Phi_e(G \oplus A;n) \uparrow) \, \vee \, 
\exists n \, (\Phi_e(F \oplus A) \restrict n \downarrow \wedge \Phi_e(F \oplus A) \restrict n \not \in T) ).
\]
That is, $(F,C)$ forces $\Phi_e(G \oplus A) \not \in [T]$ if $(F,C)$ either forces that $\Phi_e(G \oplus A)$ is not total or it forces 
that some initial segment of $\Phi_e(G \oplus A)$ converges to a string not in $T$.  As above, we want to say that 
$(F,C) \Vdash \Phi_e(G \oplus A) \in [T]$ if there is no extension of $(F,C)$ which forces $\Phi_e(G \oplus A) \not \in [T]$.  
If $(F,C)$ already forces $\Phi_e(G \oplus A)$ to be total, then we can write this condition as 
$(F,C) \Vdash \Phi_e(G \oplus A) \in [T] \, \Leftrightarrow$ 
\[
\forall \, \text{finite} \, \hat{F} \, \forall n \, ( F \sqsubseteq \hat{F} \subseteq F \cup C 
\rightarrow ( \Phi_{e,s}(\hat{F} \oplus A) \restrict n \downarrow \rightarrow \Phi_e(\hat{F} \oplus A) \restrict n \in T )).
\]
Because the set of conditions $(F,C)$ which force $\Phi_e(G \oplus A) \not \in [T]$ is a $\Sigma_3^A$ set, we know that for any 
3-$A$-generic $G$, there is a condition $(F,C)$ such that $G$ extends $(F,C)$ and either $(F,C)$ forces $\Phi_e(G \oplus A) 
\not \in [T]$ or $(F,C)$ has no extension that forces $\Phi_e(G \oplus A) \not \in [T]$.  In other words, $\Phi_e(G \oplus A)$ is 
either forced into or out of $[T]$.  

For a contradiction, suppose that $\Phi_e(G \oplus A)$ is a path in $[T]$.  There is a condition $(F,C)$ which is extended by $G$ 
and which forces $\Phi_e(G \oplus A)$ to be total and $\Phi_e(G \oplus A) \in [T]$.  Because $(F,C)$ forces 
$\Phi_e(G \oplus A)$ is total, there is a set $\hat{G} \leq_T A$ such that $\hat{G}$ extends $(F,C)$ and $\Phi_e(\hat{G} \oplus A)$ 
is total.  Furthermore, because $(F,C)$ forces $\Phi_e(G \oplus A) \in [T]$, each initial segment $\Phi_e(\hat{G} \oplus A) \restrict n$ 
must be an element of $T$.  Therefore, $\Phi_e(\hat{G} \oplus A) \in [T]$.  However, $\hat{G} \oplus A \leq_T A$, 
so we have a contradiction to the fact that $A$ does not compute a path in $T$.  
\end{proof}

\begin{defn}
For any set $X$, the \textbf{principal function} $p_X$ is defined by $p_X(n) =$ the $(n+1)^{\text{st}}$ element of $X$.  
\end{defn}

\begin{lem}\label{dom}
Let $G$ be weakly 3-$A$-generic for forcing with $A$-computable Mathias conditions. The principal function $p_G$ of $G$ 
dominates all functions recursive in $A$.
\end{lem}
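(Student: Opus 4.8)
The plan is to fix an arbitrary $h \leq_T A$ and show that $p_G$ dominates $h$ by arranging for $G$ to meet a suitable $\Sigma^{0,A}_3$ dense set of $A$-computable Mathias conditions. For an index of such an $h$, define $D_h$ to be the set of conditions $(F,C)$ such that, writing $c_0 < c_1 < \cdots$ for the increasing enumeration of $C$, we have $c_j > h(|F|+j)$ for every $j$. The point of this definition is the following elementary observation: if a set $G$ extends a condition $(F,C) \in D_h$, then since $\max F < \min C$ the set $F$ is an initial segment of $G$, and $G \setminus F \subseteq C$, so the $(|F|+j+1)$st element of $G$ is at least the $(j+1)$st element of $C$, namely $c_j$. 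Hence $p_G(n) = p_G(|F|+j) \geq c_j > h(|F|+j) = h(n)$ for all $n \geq |F|$, i.e.\ $p_G$ dominates $h$.

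Next I would verify that $D_h$ is dense. Given any condition $(F,C)$, thin $C$ to the infinite, $A$-computable set $C' = \{c'_0 < c'_1 < \cdots\}$ defined recursively by letting $c'_j$ be the least element of $C$ exceeding both $c'_{j-1}$ (when $j>0$) and $h(|F|+j)$; this search succeeds because $C$ is infinite and $h \leq_T A$ (so no monotonicity assumption on $h$ is needed). Then $(F,C')$ extends $(F,C)$ and belongs to $D_h$. I would also check that $D_h$ is a $\Sigma^{0,A}_3$ set, uniformly in the index of $h$: a condition is coded as a pair $(e,i)$ with $F$ given canonically by $e$ and $C = W_i^A$, and the defining clause ``$W_i^A$ is infinite, $\max F < \min W_i^A$, and for every $j$ the $(j+1)$st element of $W_i^A$ exceeds $h(|F|+j)$'' is an arithmetic condition over $A$ of bounded complexity. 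As the discussion immediately preceding this lemma records, merely describing the $A$-computable Mathias conditions is already $\Sigma^{0,A}_3$, and the additional quantification over $j$ together with the ``$(j+1)$st element'' clause (which is $\Pi^{0,A}_2$-uniformly in the element) does not push the set past $\Sigma^{0,A}_3$.

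Finally, since $G$ is weakly $3$-$A$-generic and $D_h$ is a $\Sigma^{0,A}_3$ dense subset of the forcing, $G$ meets $D_h$: there is a condition $(F,C) \in D_h$ extended by $G$. By the observation in the first paragraph, $p_G(n) > h(n)$ for all $n \geq |F|$, so $p_G$ dominates $h$. Since $h \leq_T A$ was arbitrary, $p_G$ dominates every function computable in $A$, which is the statement of the lemma.

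I expect the only real obstacle to be the complexity bookkeeping for $D_h$ — namely, confirming that ``the $(j+1)$st element of the c.e.-in-$A$ set $W_i^A$ exceeds $h(|F|+j)$, uniformly in $j$'' does not exceed $\Sigma^{0,A}_3$, so that weak $3$-$A$-genericity is genuinely enough; the density and the domination computation are routine.
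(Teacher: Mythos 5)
Your proposal is correct and is essentially the paper's own argument: the paper likewise fixes an $A$-computable function, observes that any condition $(F,C)$ can be thinned to $(F,C')$ so that $p_{F\cup C'}$ majorizes it, and notes that the resulting dense set of conditions is $\Sigma^{0,A}_3$, so a weakly $3$-$A$-generic $G$ meets it. You have simply filled in the details (the explicit definition of $D_h$, the tail estimate $p_G(|F|+j)\geq c_j$, and the complexity bookkeeping) that the paper leaves as routine.
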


\begin{proof}
Let $e$ be any index for which $\Phi_e(A)$ is total.  For any condition $(F,C)$, we can $A$ computably thin out $C$ to $C' \subseteq C$ 
such that $p_{F \cup C'}$ dominates $\Phi_e(A)$.  Furthermore, $(F,C')$ will be an extension of $(F,C)$.  Therefore, the set of 
conditions $(\hat{F},\hat{C})$ for which $p_{\hat{F} \cup \hat{C}}$ dominates $\Phi_e(A)$ is dense and is also a $\Sigma_3^A$ set.  
Therefore, $G$ must meet each such set of conditions.  
\end{proof}

\begin{cor}
\label{cor:high}
If $G$ is weakly 3-$A$-generic for forcing with $A$-computable Mathias conditions, then $A'' \leq_T (G \oplus A)'$.
\end{cor}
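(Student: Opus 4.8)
The plan is to derive this immediately from Lemma~\ref{dom} and the relativized Martin characterization in Theorem~\ref{marel}. I would apply Theorem~\ref{marel} with the set it names $A$ taken to be $G$ and the set it names $B$ taken to be $A$; it then states that $(G\oplus A)'\geq_T A''$ precisely when some single function computable in $G\oplus A$ dominates every function computable in $A$. So the whole task reduces to exhibiting one such dominating function, and the principal function $p_G$ is the obvious candidate.

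Before invoking Lemma~\ref{dom}, I would note that $p_G$ is total, i.e.\ that $G$ is infinite. For each $n$ the collection of $A$-computable Mathias conditions $(F,C)$ with $|F|\geq n$ is dense (given $(F,C)$, absorb the $n$ least elements of $C$ into $F$ and thin $C$ accordingly) and forms a $\Sigma_3^A$ set, so the weakly $3$-$A$-generic set $G$ meets it; hence $|G|\geq n$ for all $n$. Consequently $p_G$ is a total function with $p_G\leq_T G\leq_T G\oplus A$.

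Lemma~\ref{dom} now says exactly that $p_G$ dominates every function recursive in $A$. Thus $p_G$ witnesses the right-hand side of the equivalence in Theorem~\ref{marel} (with $B=A$), and we conclude $A''\leq_T (G\oplus A)'$. There is no genuine obstacle here: the only points needing a moment's attention are choosing the correct instantiation of the variables in Theorem~\ref{marel} and checking that $p_G$ really is total; everything else is immediate from the two results already established.
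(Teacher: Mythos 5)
Your proposal is correct and is essentially the paper's own argument: the paper proves this corollary by citing exactly Theorem~\ref{marel} (instantiated as you do, with the theorem's $B$ being $A$ and its $A$ being $G$) together with Lemma~\ref{dom}. Your additional density check that $G$ is infinite (so that $p_G$ is total) is a sensible small verification that the paper leaves implicit.
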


\begin{proof}
This corollary follows immediately from Lemmas \ref{marel} and \ref{dom}.  
\end{proof}

\begin{prop}
\label{prop:nocomp}
For any infinite computable tree $T \subseteq 2^{< \omega}$ with no computable paths, there is a high-above ideal $\mathcal{I}$ such that no element of 
$\mathcal{I}$ can compute a path through $T$.  
\end{prop}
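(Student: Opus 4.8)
The plan is to obtain $\mathcal{I}$ as the ideal generated by an increasing tower of sets. Specifically, I would build a chain $\emptyset = A_0 \leq_T A_1 \leq_T A_2 \leq_T \cdots$ and set $\mathcal{I} = \{ X : (\exists n)\, X \leq_T A_n \}$. For any increasing chain, $\mathcal{I}$ is an ideal: it is downward closed by definition, and if $X \leq_T A_n$ and $Y \leq_T A_m$ with $n \leq m$ then $X \oplus Y \leq_T A_m$. No element of $\mathcal{I}$ computes a path through $T$ as long as no $A_n$ does. And $\mathcal{I}$ is high-above provided every successor step satisfies $A_n <_T A_{n+1}$ and $A_n'' \leq_T A_{n+1}'$: indeed, given $X \in \mathcal{I}$ choose $n$ with $X \leq_T A_n$ and take $B = A_{n+1} \in \mathcal{I}$; then $X \leq_T A_n <_T A_{n+1}$ forces $X <_T A_{n+1}$, and $X'' \leq_T A_n'' \leq_T A_{n+1}'$.

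So the construction reduces to a single successor step: given a set $A$ that computes no path through $T$, produce $B >_T A$ with $A'' \leq_T B'$ and $B$ computing no path through $T$. Here I would use the $A$-computable Mathias forcing developed in this section. Fix a set $G$ that is $3$-$A$-generic for $A$-computable Mathias forcing; such a $G$ exists by the standard construction of a sufficiently generic filter (the relevant sets of conditions are all $\Sigma_3^A$, so only countably many requirements must be met or avoided), and a $3$-$A$-generic is in particular weakly $3$-$A$-generic, since a dense set of conditions leaves no condition with no extension in it. Put $B = G \oplus A$. Then $B \geq_T A$, and in fact $B >_T A$: if $G \leq_T A$ then the principal function $p_G$, and hence the function $n \mapsto p_G(n) + 1$, would be computable from $A$, contradicting Lemma \ref{dom}, which says $p_G$ dominates every function computable from $A$. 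By Corollary \ref{cor:high}, $A'' \leq_T (G \oplus A)' = B'$. And since $A$ computes no path through $T$ and $G$ is $3$-$A$-generic, Lemma \ref{lem:tree} gives that $B = G \oplus A$ computes no path through $T$.

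Granting the successor step, the construction is immediate: start with $A_0 = \emptyset$, which computes no path through $T$ because $T$ has no computable path, and iterate to produce the chain $\langle A_n : n \in \omega \rangle$; then $\mathcal{I} = \{ X : (\exists n)\, X \leq_T A_n \}$ is the desired high-above ideal, and it contains no set computing a path through $T$.

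The part requiring care is the successor step, specifically confirming that a single $3$-$A$-generic object simultaneously satisfies the hypotheses of Lemma \ref{lem:tree}, Lemma \ref{dom} (via weak $3$-$A$-genericity), and the noncomputability argument that yields $B >_T A$, together with the fact that such generics for $A$-computable Mathias forcing exist in the first place; the subtlety flagged in the discussion before Lemma \ref{lem:tree} is exactly that describing the conditions and their relationships is already $\Sigma_3^A$. Everything else is routine verification of the ideal axioms and the high-above condition.
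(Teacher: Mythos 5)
Your proposal is correct and follows essentially the same route as the paper: iterate the successor step "adjoin a $3$-$A$-generic $G$ for $A$-computable Mathias forcing and pass to $G \oplus A$," invoking Lemma \ref{lem:tree} to avoid paths through $T$ and Corollary \ref{cor:high} for the highness condition, then take the ideal generated by the resulting chain. Your explicit verification that $G \oplus A >_T A$ (via Lemma \ref{dom}) is a detail the paper leaves implicit, but it changes nothing of substance.
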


\begin{proof}
We define a sequence of sets $I_0 <_T I_1 <_T  \cdots$ such that $I_n$ does not compute a path through $T$ and 
$I_n'' \leq_T I_{n+1}'$.  $\mathcal{I} = \{ X \, | \, \exists n (X \leq_T I_n) \}$ has the 
required properties.  

Let $I_0 = \emptyset$ and notice that $I_0$ does not compute a path through $T$.  Assume that $I_n$ has been defined and does 
not compute a path through $T$.  Let $\hat{I}_n$ be a 3-$I_n$-generic with respect to computable $I_n$ Mathias forcing and let 
$I_{n+1} = I_n \oplus \hat{I}_n$.  By Lemma \ref{lem:tree}, $I_{n+1}$ does not compute a path through $T$ and by Corollary 
\ref{cor:high}, $I_n'' \leq_T I_{n+1}'$.
\end{proof} 

\begin{prop} 
\label{prop:noae}
There is a high-above ideal that includes no a.e.~dominating set.
\end{prop}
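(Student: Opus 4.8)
The plan is to realize $\mathcal{I}$ as the downward closure of an ascending Turing chain $I_0 <_T I_1 <_T \cdots$ whose members are all $2$-random — so that no $I_n$, and hence no element of $\mathcal{I}$, is a.e.~dominating, by Theorem \ref{2randnotAED} and upward closure of a.e.~domination — and which satisfies $I_n'' \leq_T I_{n+1}'$, so that $\mathcal{I}$ is high-above. Fix a universal prefix-free machine $U$ and, for $k \geq 1$, put $R_k = \Omega_U^{0^{(k)}}$; by Lemma \ref{lem:omegaprop}, $R_k$ is $1$-random relative to $0^{(k)}$ and $R_k \oplus 0^{(k)} \equiv_T 0^{(k+1)}$. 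Set $I_n = R_1 \oplus R_2 \oplus \cdots \oplus R_{n+1}$, so that $I_n = I_{n-1} \oplus R_{n+1}$.

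First I would prove, by induction on $n$, that $I_n$ is $2$-random and $I_n' \equiv_T 0^{(n+2)}$. For the base case, $I_0 = \Omega_U^{0'}$ is $1$-random relative to $0'$, hence $2$-random, and being $2$-random it satisfies $I_0' \equiv_T I_0 \oplus 0' \equiv_T 0''$, using Lemma \ref{lem:omegaprop} together with the fact (Kautz \cite{kautz:phd}, recalled in Section \ref{sec:random}) that $R' \equiv_T R \oplus 0'$ for every $2$-random $R$. For the inductive step, assume the claim at $n-1$. Since $R_{n+1}$ is $1$-random relative to $0^{(n+1)} \equiv_T I_{n-1}'$, it is $2$-random relative to $I_{n-1}$, so van Lambalgen's Theorem \ref{lamb} makes $I_n = I_{n-1} \oplus R_{n+1}$ $2$-random. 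Then, again because $I_n$ is $2$-random, $I_n' \equiv_T I_n \oplus 0' \equiv_T (I_{n-1} \oplus 0') \oplus R_{n+1} \equiv_T 0^{(n+1)} \oplus \Omega_U^{0^{(n+1)}} \equiv_T 0^{(n+2)}$, using the inductive hypothesis $I_{n-1} \oplus 0' \equiv_T I_{n-1}' \equiv_T 0^{(n+1)}$ and Lemma \ref{lem:omegaprop}. Also $R_{n+1} \not\leq_T I_{n-1}$, being random relative to it, so $I_{n-1} <_T I_n$ and the chain is strictly increasing.

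Granting the claim, $I_n'' \equiv_T (0^{(n+2)})' = 0^{(n+3)} \equiv_T I_{n+1}'$, so $I_n'' \leq_T I_{n+1}'$. I would then take $\mathcal{I} = \{ X : \exists n\, (X \leq_T I_n) \}$ and check the three requirements. It is an ideal: it is downward closed, and since the chain is increasing, $X \leq_T I_m$ and $Y \leq_T I_n$ imply $X \oplus Y \leq_T I_{\max(m,n)}$. It is high-above: given $A \in \mathcal{I}$ with $A \leq_T I_n$, the set $B = I_{n+1} \in \mathcal{I}$ satisfies $A \leq_T I_n <_T I_{n+1} = B$ and $A'' \leq_T I_n'' \leq_T I_{n+1}' = B'$. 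And no element of $\mathcal{I}$ is a.e.~dominating: if $A \leq_T I_n$ were a.e.~dominating, then $I_n$ would be too, since a.e.~domination is closed upward, contradicting that $I_n$ is $2$-random.

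The delicate point — and the reason the naive approach of taking $I_{n+1}$ to be a high-above generic extension of $I_n$ (in the style of Proposition \ref{prop:nocomp}) does not obviously succeed — is the tension between the two conditions on the chain: the high-above requirement forces $I_n'$ to climb past $0^{(n+1)}$, whereas ``$2$-random'' (our only available handle on ``not a.e.~dominating'') is incompatible with being too high, since sufficiently random sets are not high. Taking the fresh bit of randomness at level $n$ to be precisely $\Omega_U^{0^{(n+1)}}$ is what threads this needle: it is random over $I_{n-1}$, so van Lambalgen keeps $I_n$ $2$-random, and yet $\Omega_U^{0^{(n+1)}} \oplus 0^{(n+1)} \equiv_T 0^{(n+2)}$ forces the jump up by exactly one level. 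Carrying these two facts simultaneously through the induction is essentially the whole proof; the ideal-theoretic verifications are routine.
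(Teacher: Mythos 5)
Your proposal is correct and is essentially the paper's own argument: an increasing chain of $2$-random sets built by iteratively joining with relativized $\Omega$ numbers, with van Lambalgen's Theorem keeping each stage $2$-random (hence not a.e.~dominating by Theorem \ref{2randnotAED}), and the GL$_1$ property of $2$-randoms plus $\Omega^X \oplus X \equiv_T X'$ forcing the jump up exactly one level per stage. The only cosmetic difference is that you relativize $\Omega$ to $0^{(n+1)}$ where the paper uses $\Omega^{Q_n'}$; since you prove $I_{n-1}' \equiv_T 0^{(n+1)}$, these oracles are Turing equivalent and the constructions coincide.
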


\begin{proof}
We define a chain $Q_0 \leq_T Q_1 \leq_T \cdots$ and let $\mathcal{I} = 
\{ X \, | \, (\exists n)( A \leq_T Q_n)\}$. To ensure $Q_n \leq_T Q_{n+1}$ we
define first a sequence $R_0, R_1, \ldots$ and let $Q_0 = R_0$,
$Q_{n+1}=R_{n+1}\oplus Q_n$.

Let $R_0=\Omega^{0'}$, with respect to an arbitrary universal
prefix-free machine. Let $R_{n+1}=\Omega^{Q_n'}$.  
Note that $R_0$ is 2-random and each $R_{n+1}$ is 2-random
relative to $Q_n$. Hence by van Lambalgen's Theorem, each $R_n$
and $Q_n$ is 2-random. Furthermore, Kautz \cite{kautz:phd} proved that 2-randoms are GL$_1$, so we have that 
$Q_n' \equiv_T Q_n \oplus 0'$ for all $n$.  Using this fact and Property 3 of Lemma \ref{lem:omegaprop}, it follows that 
$$Q_{n+1}'  \equiv_T Q_{n+1} \oplus 0' = R_{n+1} \oplus Q_n \oplus 0' \equiv_T R_{n+1} \oplus Q_n' \equiv_T Q_n''.$$

Since each $Q_n$ is 2-random (and hence not a.e.~dominating by Theorem \ref{2randnotAED}), the ideal generated by the $Q_n$'s
contains no a.e.~dominating set.
\end{proof}

\end{document}